
\documentclass[12pt]{article}
\usepackage{amsmath,amsthm,amssymb}
\usepackage{times}
\usepackage{enumerate}
\usepackage{amsfonts}
\usepackage{fancyhdr}
\usepackage{titlesec}
\usepackage{cite}
\usepackage{ifthen}
\usepackage{amssymb}
\usepackage{fancyhdr}
\usepackage{titlesec}
\usepackage[arrow,matrix]{xy}
\usepackage{pifont}
\usepackage{stmaryrd}
\usepackage{setspace}
\usepackage{indentfirst}
\usepackage{amsmath,amssymb,amscd,bbm,amsthm,mathrsfs,dsfont}
\pagestyle{myheadings}
\markboth{}{}


\theoremstyle{definition}
\newtheorem{theorem}{Theorem}[section]
\newtheorem{corollary}[theorem]{Corollary}
\newtheorem{lemma}[theorem]{Lemma}

\newtheorem{notation}[theorem]{Notation}



\numberwithin{equation}{section}

\frenchspacing

\textwidth=167mm
\textheight=23cm
\parindent=16pt
\oddsidemargin=-0.5cm
\evensidemargin=-0.5cm
\topmargin=-0.5cm

\newcommand{\subjclass}[1]{\bigskip\noindent\emph{2010 Mathematics Subject Classification:}\enspace#1}
\newcommand{\keywords}[1]{\noindent\emph{Keywords:}\enspace#1}



\begin{document}


\baselineskip=16pt


\title{Global well-posedness and   decay estimates  for three-dimensional compressible Navier-Stokes-Allen-Cahn system}

\author{Xiaopeng Zhao \\
\small{ College of Sciences, Northeastern University, Shenyang 110004, China
 }\\
\small{zhaoxiaopeng@jiangnan.edu.cn}
 }

\date{}

\maketitle


\begin{abstract}
We study the small data global well-posedness and time-decay rates of solutions to the Cauchy problem for 3D  compressible Navier-Stokes-Allen-Cahn equations  via a refined pure energy method. In particular, the optimal decay rates of the higher-order spatial derivatives of the solution are obtained, the $\dot{H}^{-s}$ ($0\leq s<\frac32$)   negative Sobolev norms is shown to be preserved along time evolution and enhance the decay rates.

 \subjclass{  35Q30, 76N10.}

\keywords{Global well-posedness; decay estimates; compressible Navier-Stokes-Allen-Cahn equations; pure energy method.}
\end{abstract}

{\small
\section{Introduction} \label{sect1}
A fluid-mechanical theory for two-phase mixtures of fluids faces a well-known
mathematical difficulty:  the movement of the interfaces is naturally amenable to a
Lagrangian description, while the bulk fluid flow is usually considered in the Eulerian
framework \cite{EF}.  Recently,  the phase-field methods, or sometimes called the diffuse interface approaches, was introduced to overcome this difficulty by postulating the existence of a ``diffuse" interface spread over a possibly narrow region covering the
``real" sharp interface boundary. Now, this models become one of the major tools to study a variety of interfacial phenomena.
As the underlying physical problem still conceptually consists of sharp interfaces, the dynamics of the phase variable remains to a considerable extent purely fictitious \cite{EF}. Typically, different variants of Allen-Cahn, Cahn-Hilliard or other types of dynamics were used to describe the models, see previous studies \cite{ADM,MEG,JL,CL} for example.

One of the
well-known diffuse interface model for two-phase flow  is the coupled Navier-Stokes/Allen-Cahn system. In this model, the interfaces between the phases are assumed to be of ``diffuse" nature, that is, sharp interfaces are replaced by narrow transition layers. These regions are located by a phase field variable $\chi$ governed by the Allen-Cahn equation, while the dynamics are described by the Navier-Stokes equations. In \cite{BT}, Blesgen proposed this model and  developted a thermodynamically and mechanically consistent set of PDEs extending the Navier-Stokes equations to a compressible binary Allen-Cahn mixture. As pointed out in Blesgen \cite{BT}, the  Navier-Stokes-Allen-Cahn model
can be seen as a first step towards incorporating transport mechanism into the
description of phase-formation processes.

During the past years, many authors studied the properties of solutions for the incompressible Navier-Stokes system with matched density. For example, Favre and Schimperna  \cite{FG} considered the existence of weak solution in 3D and well-posedness of strong solution in 2D,
Xu et. al. \cite{Xu} studied the existence of axisymmetric solutions, Zhao et. al. \cite{Zhao} studied the vanishing viscosity limit, Gal and Grasselli \cite{Gal1,Gal2} invistigated the asymptotic behavior and attractors and the reference therein. For the system based on the incompressible Navier-Stokes system with  different
densities, Li et. al. \cite{Li1,Li2} studied the existence and uniqueness of  local strong solutions   and established a blow-up criterion for
such strong solutions in 3D case. Moreover, by using an energetic variational approach, Jiang et. al. \cite{Jiang} derived a different model of  Navier-Stokes-Allen-Cahn, then proved
the existence of weak solutions in 3D, the well-posedness of strong solutions in 2D, and studied the long time behavior
of the strong solutions.

As far as we know, there are also some classical results are available for the initial-boundary value problem of compressible Navier-Stokes-Allen-Cahn system. In \cite{EF}, for the initial-boundary value problem of 3D compressible model, by using Faedo-Galerkin approximation, Feireisl et. al. proved the existence of global-in-time weak solutions without any restriction on the size of initial data for the exponent of pressure $\gamma>6$. This result was extended to $\gamma>2$ in Chen et. al. \cite{SChen}. Moreover, supposed that the boundary conditions of the model  are of mixed type (Neumann-Dirichlet) and may be nonhomogeneous, the density is H\"{o}lder continuous for instance, Kotschote \cite{MK}   established the result on  local-in-time existence and uniqueness statement for sufficiently smooth data, in a general $C^2$ bounded domain of $\mathbb{R}^n$ ($n\geq1$). Ding et. al. \cite{DSJ1,DSJ2} proved the existence and uniqueness of global classical solution, the existence of weak solutions and the existence of unique global strong solution of the initial-boundary value problem of one-dimensional compressible Navier-Stokes-Allen-Cahn system   for the initial data   without vacuum states. Recently, Zhu and his cooperators \cite{HYin,TL}   paid their attentions on the large time behavior of solutions for the  Cauchy problem and inflow problem of 1D compressible Navier-Stokes-Allen-Cahn equations, respectively. In \cite{TL}, Luo, Yin and Zhu    proved that the solutions to the Cauchy problem of 1D compressible Navier-Stokes/Allen-Cahn system tend time-asymptotically to the rarefaction wave, where the strength of the rarefaction wave is not required to be small. In addition,  for the inflow problem of 1D compressible Navier-Stokes/Allen-Cahn system, Yin and Zhu \cite{HYin} analyzed the large-time behavior of the solution, proved the existence of the stationary solution and the asymptotic stability of the nonlinear wave.
However,
to our knowledge, there's no result on the  Cauchy  problem for the 3D compressible Navier-Stokes-Allen-Cahn system.

In this paper, we consider the Cauchy problem of 3D compressible Navier-Stokes-Allen-Cahn equations \cite{BT,MK,GW}
\begin{equation} \label{1-1}
 \left\{ \begin{aligned}
 &\rho_t+\nabla\cdot(\rho u)=0,\\
         &\partial_t(\rho u)+\nabla\cdot(\rho u\otimes u)+\nabla p=\mu\Delta u+(\mu+\lambda)\nabla\hbox{div}u-\ell\nabla\cdot\left(\nabla\chi\otimes\nabla\chi-\frac{|\nabla\chi|^2}2 \mathbb{I}_3\right),\\
         &\partial_t(\rho\chi)+\hbox{div}(\rho\chi u)=-\omega, \\
                   &\rho\omega=-\ell\Delta\chi+\rho\frac{\partial\Phi(\rho,\chi)}{\partial\chi},\\
                   &(\rho,u,\chi)|_{t=0}=(\rho_0,u_0,\chi_0),
                           \end{aligned} \right.
                           \end{equation}
where $\rho$ denotes the total fluid density, $u$ implies the mean velocity of the fluid mixture, $\chi$ is the concentration of one selected constituent and the pressure $p=p(\rho)$ is a smooth function in a neighborhood of $1$ with $p'(1)=1$, respectively. Moreover, $\omega$ is the chemical potential and $\mathbb{I}_3$ denotes a $3\times 3$ identity matrix. $\mu$ and $\lambda$ are two viscosity coefficients, which satisfy
$$
\mu>0,\quad 2\mu+3\lambda\geq0.
$$
The specific free energy $f(\rho,\chi)$ can be defined as
\begin{equation}
\label{1-1x}
\Phi(\rho,\chi)=\frac{\rho^{\gamma-1}}{\gamma-1}+\frac1{\ell}\left(\frac{\chi^4}4-\frac{\chi^2}2\right),
\end{equation}
where $\gamma>1$ is the adiabatic constant and the constant $\sqrt{\ell}$ denotes the thickness of the interfacial region.
In this paper, we take
$$ \rho\frac{\partial \Phi(\rho,\chi)}{\partial\chi}=\frac{\rho}{\ell}(\chi^3-\chi).
$$
For simplicity, we let $\ell\equiv1$ throughout the rest of this paper.
\begin{notation}
In the following, we  use $H^k(\mathbb{R}^3$ $(k\in\mathbb{R})$ to denote the usual Sobolev spaces with norm $\|\cdot\|_{H^s}$ and $L^p(\mathbb{R}^3)$, $1\leq p\leq\infty$ to denote the ususl $L^p$ spaces with norm $\|\cdot\|_{L^p}$. We also introduce the homogeneous negative index Sobolev space $\dot{H}^{-s}(\mathbb{R}^3)$:
$$
\dot{H}^{-s}(\mathbb{R}^3):=\{f\in L^2(\mathbb{R}^3):\||\xi|^{-s}\hat{f}(\xi)\|_{L^2}<\infty\}
$$endowed with the norm $\|f\|_{\dot{H}^{-s}}:=\||\xi|^{-s}\hat{f}(\xi)\|_{L^2}$. The symbol $\nabla^l$ with an integer $l\geq0$ stands for the usual spatial derivatives of order $l$. For instance,  define
 $$
 \nabla^lz=\{\partial_x^{\alpha}z_i||\alpha|=l,i=1,2,3\},~~~z=(z_1,z_2,z_3).
 $$
 If $l<0$ or $l$ is not a positive integer, $\nabla^l$ stands for $\Lambda^l$ defined by
\begin{equation}\label{A9}
\Lambda^sf(x)=\int_{\mathbb{R}^3}|\xi|^s\hat{f}(\xi)e^{2\pi ix\cdot\xi}d\xi,
\end{equation}
where $\hat{f}$ is the Fourier transform of $f$. Moreover, we use the notation $A\lesssim B$ to mean that $A\leq cB$ for a universal constant $c>0$ that only depends on the parameters coming from the problem and the indexes $N$ and $s$  coming from the regularity on the data.
\end{notation}


For convenience, we denote $\varrho=\rho-1$, rewrite (\ref{1-1}) in the perturbation form as
\begin{equation} \label{1-2}
 \left\{ \begin{aligned}
 &\varrho_t+\hbox{div}u= -\varrho\hbox{div}u-u\cdot\nabla\varrho,\\
         &u_t-\mu\Delta u-(\mu+\lambda)\nabla\hbox{div}u+\nabla\varrho=-u\cdot\nabla u-h(\varrho)( \mu\Delta u+(\mu+\lambda)\nabla\hbox{div}u)
         \\
         &\quad\quad\quad\quad\quad\quad\quad\quad\quad\quad\quad\quad\quad\quad\quad-g(\varrho)\nabla\varrho-\phi(\varrho)\hbox{div} \left(\nabla\chi\otimes\nabla\chi-\frac{|\nabla\chi|^2}2 \mathbb{I}_3\right),\\
         &\chi_t-\Delta\chi =-u\cdot\nabla\chi-\varphi(\varrho)\Delta\chi-\phi(\varrho)(\chi^3-\chi),\\
                   &(\varrho,u,\chi)|_{t=0}=(\varrho_0,u_0,\chi_0)=(\rho_0-1,u_0,\chi_0),
                           \end{aligned} \right.
                           \end{equation}
where
$$
h(\varrho)=\frac{\varrho}{\varrho+1},\quad g(\varrho)=\frac{p'(\varrho+1)}{\varrho+1}-1,\quad\phi(\varrho)=\frac1{\varrho+1},\quad
\varphi(\varrho)=\frac{\varrho(\varrho+2)}{(\varrho+1)^2},
$$


For this system, we can show the local existence theorem.
\begin{lemma}
\label{lem1.1}
Let the initial data$(\varrho_0,u_0,  \chi_0)\in H^3(\mathbb{R}^3)\times H^3(\mathbb{R}^3)\times H^{4}(\mathbb{R}^3)$. Then there exit positive constants $\nu_0>0$ and $T>0$ depending only on $\|(\varrho_0,u_0,\chi_0,\nabla\chi_0)\|_{H^3}$ such that the system (\ref{1-2}) has a unique solution
$$
(\varrho,u,\chi,\nabla\chi)\in L^{\infty}(0,T;H^3),\quad(u,\chi,\nabla\chi)\in L^2(0,T;H^4),
$$
satisfying
$$
\|(\varrho,u,\chi,\nabla\chi)(t)\|_{H^3},~~\left(\nu_0\int_0^t\|\nabla(u,\chi,\nabla\chi)(\tau)\|_{H^3}^2d\tau\right)^{\frac12}\leq 2\|(\varrho_0,u_0,\chi_0,\nabla\chi_0)\|_{H^3},\quad\forall t\in[0,T].
$$
\end{lemma}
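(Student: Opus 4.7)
\emph{Proof sketch.} The plan is to construct the local solution by a standard linearization-and-iteration scheme followed by uniform $H^3$ energy estimates and a contraction argument in a lower-regularity norm. Starting from $(\varrho^0,u^0,\chi^0):=(\varrho_0,u_0,\chi_0)$, I would inductively define $(\varrho^{n+1},u^{n+1},\chi^{n+1})$ as the unique solution of the linearized system obtained from \eqref{1-2} by placing the nonlinearities evaluated at $(\varrho^n,u^n,\chi^n)$ on the right-hand sides:
\begin{equation*}
\left\{
\begin{aligned}
&\partial_t \varrho^{n+1} + \mathrm{div}\,u^{n+1} = -\varrho^n\,\mathrm{div}\,u^n - u^n\cdot\nabla\varrho^n,\\
&\partial_t u^{n+1} - \mu\Delta u^{n+1} - (\mu+\lambda)\nabla\mathrm{div}\,u^{n+1} + \nabla\varrho^{n+1} = F(\varrho^n,u^n,\chi^n),\\
&\partial_t\chi^{n+1} - \Delta\chi^{n+1} = G(\varrho^n,u^n,\chi^n),
\end{aligned}
\right.
\end{equation*}
with the same initial data, where $F$ and $G$ collect the nonlinear right-hand sides of the second and third equations of \eqref{1-2}. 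The first two lines form a linear symmetric hyperbolic-parabolic system for $(\varrho^{n+1},u^{n+1})$ (solvable by a Friedrichs-type or Galerkin argument), and the last is a linear heat equation for $\chi^{n+1}$ (solvable via the heat semigroup); consequently all iterates exist on some common short interval $[0,T^*]$.

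Setting $M:=2\|(\varrho_0,u_0,\chi_0,\nabla\chi_0)\|_{H^3}$, the core of the argument is to show by induction in $n$ that for some $T>0$ depending only on $M$,
\[
\sup_{0\le t\le T}\|(\varrho^n,u^n,\chi^n,\nabla\chi^n)(t)\|_{H^3}^2 + \nu_0\int_0^T\|\nabla(u^n,\chi^n,\nabla\chi^n)(\tau)\|_{H^3}^2\,d\tau \le M^2.
\]
For $0\le k\le 3$ I would apply $\nabla^k$ to each equation, pair with $\nabla^k\varrho^{n+1}$, $\nabla^k u^{n+1}$, $\nabla^k\chi^{n+1}$ respectively, and integrate over $\mathbb{R}^3$. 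The pressure-velocity coupling $\nabla\varrho^{n+1}$ symmetrizes with the $\mathrm{div}\,u^{n+1}$ term of the continuity equation, while the parabolic structure of the $u$- and $\chi$-equations generates the dissipative quantity $\nu_0\|\nabla(u^{n+1},\chi^{n+1},\nabla\chi^{n+1})\|_{H^3}^2$ with $\nu_0>0$ depending only on $\mu$. The nonlinear contributions built from $h(\varrho^n)$, $g(\varrho^n)$, $\phi(\varrho^n)$, $\varphi(\varrho^n)$ and from the capillary-type stress $\phi(\varrho^n)\,\mathrm{div}(\nabla\chi^n\otimes\nabla\chi^n-\tfrac12|\nabla\chi^n|^2\mathbb{I}_3)$ are handled by Moser-type product and composition inequalities in $H^3$, valid once $\|\varrho^n\|_{L^\infty}\le 1/2$, a smallness preserved on a short interval thanks to the embedding $H^3\hookrightarrow L^\infty$ in three dimensions. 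A continuity bootstrap combined with Gronwall's inequality then closes the induction for $T$ chosen small enough in terms of $M$.

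With the uniform $H^3$ bound in hand, I would estimate the differences $\delta\varrho^n:=\varrho^{n+1}-\varrho^n$, $\delta u^n:=u^{n+1}-u^n$, $\delta\chi^n:=\chi^{n+1}-\chi^n$ in the lower-order space $L^\infty(0,T;L^2)\cap L^2(0,T;\dot H^1)$, where all nonlinear products become harmless because the factors outside the difference are controlled in $L^\infty$ by the uniform $H^3$ bound. For $T$ small enough this produces a contraction factor of the form $C(M)T^{1/2}<1$, so the iterates form a Cauchy sequence in this weaker norm. Weak-$\ast$ compactness combined with the uniform $H^3$ estimate upgrades the convergence to yield a limit with the regularity claimed, and the factor $2$ in the asserted bound is inherited from $M$; uniqueness follows from the same difference estimate applied to two candidate solutions sharing the same initial data.

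The principal technical obstacle lies in the high-order energy estimate, and more precisely in controlling the capillary-like nonlinearity $\phi(\varrho)\,\mathrm{div}(\nabla\chi\otimes\nabla\chi-\tfrac12|\nabla\chi|^2\mathbb{I}_3)$ in the momentum equation. Since this term is cubic in the derivatives of $\chi$ and already carries three derivatives (two on $\chi$ plus one from the outer divergence), bounding $\nabla^3$ of it in $L^2$ genuinely requires $\nabla\chi\in H^3$, which explains the extra regularity $\chi_0\in H^4$ imposed in the hypotheses. Propagating this additional derivative for $\chi$ consistently throughout the estimates, and ensuring that the Allen-Cahn equation produces the matching dissipation $\|\nabla^2\chi\|_{H^3}$ needed to absorb the resulting cubic terms, is the main bookkeeping difficulty.
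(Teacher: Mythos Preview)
Your proposal is correct and follows precisely the route the paper itself indicates: the paper does not actually prove Lemma~\ref{lem1.1} but states that it ``can be proved by using the linearization and iteration technique'' and refers the reader to \cite{FZ} for the analogous compressible Hall-MHD case. Your linearization scheme, uniform $H^3$ energy bounds via the symmetric hyperbolic--parabolic coupling, and contraction in a lower-order norm are exactly the standard ingredients that reference supplies, and your identification of the capillary term as the place where the extra regularity $\chi_0\in H^4$ is needed is accurate.
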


Lemma \ref{lem1.1} can be proved by using the linearization and iteration technique. We omit the proof here for simplicity. One can refer to \cite{FZ} for a detailed proof of the local existence results for the equations of compressible Hall-magnetohydrodynamic whose techniques can be easily applied to our case for the 3D compressible Navier-Stokes-Allen-Cahn equations.

The main purpose of this paper is to study the small initial data global well-posedness and optimal decay estimates of strong solutions  for system (\ref{1-1}) in the whole space $\mathbb{R}^3$. We use a general energy method, Kato-Ponce inequality and the Gagliardo-Nirenberg interpolation techniques to obtain the global well-posedness and the optimal time-decay rates of the solution to system (\ref{1-1}) when the initial data is sufficiently small.
Next, one state our main results on the global well-posedness and decay estimates of solutions for system (\ref{1-2}).
\begin{theorem}
\label{thm1.1}
Let $N\geq3$, assume that $(\varrho_0,u_0,  \chi_0)\in H^N(\mathbb{R}^3)\times H^N(\mathbb{R}^3)\times H^{N+1}(\mathbb{R}^3)$, and there exists a constant $\delta_0>0$ such that if
\begin{equation}
\label{1-3}
\|\varrho_0\|_{H^3}+\|u_0 \|_{H^3}+\| \chi_0\|_{H^3}+\| \nabla \chi_0\|_{H^3}\leq\delta_0,
\end{equation}
then there exists a unique global solution $(\varrho,u,\chi)$ satisfying that for all $t\geq0$,
\begin{equation}
\label{1-4}\begin{aligned}&
\|\varrho(t)\|_{H^N}^2+\|u(t)\|_{H^N}^2+\| \chi\|_{H^N}^2+\|\nabla \chi\|_{H^N}^2\\
&+\int_0^t(\|\nabla u(s)\|_{H^N}^2+\|   \chi\|_{H^N}^2+\|\nabla \chi\|_{H^N}^2 )ds
\\
&\leq C(\|\varrho_0\|_{H^N}^2+\|u_0\|_{H^N}^2+\| \chi_0\|_{H^N}^2+\| \nabla\chi_0\|_{H^N}^2).\end{aligned}
\end{equation}
If further, $(\varrho, u_0, \chi_0,\nabla\chi_0)\in\dot{H}^{-s}(\mathbb{R}^3)$ for some $s\in[0,\frac32)$, then for all $t\geq0$,
\begin{equation}
\label{1-5}
\|\Lambda^{-s}\varrho(t)\|_{L^2}^2+\|\Lambda^{-s}u(t)\|_{L^2}^2+\|\Lambda^{-s} \chi(t)\|_{L^2}^2+\|\Lambda^{-s} \nabla\chi(t)\|_{L^2}^2\leq C,
\end{equation}
and
\begin{equation}\label{1-6}
\begin{aligned}&
\|\nabla^l\varrho(t)\|_{H^{N-l}}+\|\nabla^lu(t)\|_{H^{N-l}}+\|\nabla^l \chi(t)\|_{H^{N-l}}\\&+ \|\nabla^{l+1} \chi(t)\|_{H^{N-l}}
\leq  C(1+t)^{-\frac{l+1}2},\quad\hbox{for}~l=0,1,\cdots, N-1 .\end{aligned}
\end{equation}\end{theorem}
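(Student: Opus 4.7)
The approach is the now-classical three-stage scheme (Matsumura-Nishida / Guo-Wang), adapted to the full Navier-Stokes-Allen-Cahn coupling. Stage one propagates the $H^N$ energy by pure energy estimates, closing globally for small data via a continuity argument against Lemma \ref{lem1.1}. Stage two propagates the $\dot H^{-s}$ norm for $0 \le s < 3/2$. Stage three combines the two by Gagliardo-Nirenberg interpolation to extract the polynomial decay \eqref{1-6}.

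\textbf{Stage one (proving \eqref{1-4}).} For each $0 \le k \le N$, I would apply $\nabla^k$ to \eqref{1-2} and pair with $\nabla^k(\varrho, u, \chi)$ together with $\nabla^{k+1}\chi$, the latter to control the $\nabla\chi \otimes \nabla\chi$ capillary coupling. Viscosity supplies the dissipation $\mu \|\nabla^{k+1} u\|^2 + (\mu+\lambda)\|\nabla^k \mathrm{div}\, u\|^2$, and the heat part of the $\chi$-equation supplies $\|\nabla^{k+1}\chi\|^2$ and $\|\nabla^{k+2}\chi\|^2$. To recover the missing $\varrho$-dissipation, augment the functional with the Matsumura-Nishida cross term $\sigma \sum_k \langle \nabla^k u, \nabla^{k+1}\varrho \rangle$; its time derivative, fed by the pressure gradient $\nabla\varrho$ in the momentum equation, produces $\|\nabla^{k+1}\varrho\|^2$. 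The quasilinear nuisances $h(\varrho)\Delta u$, $\varphi(\varrho)\Delta\chi$, $\phi(\varrho)\mathrm{div}(\nabla\chi\otimes\nabla\chi - \tfrac12|\nabla\chi|^2 \mathbb{I}_3)$, and $\phi(\varrho)(\chi^3-\chi)$ are handled by Kato-Ponce commutator estimates and Sobolev product inequalities, absorbed into the dissipation using the smallness of $\|\varrho\|_{L^\infty}$ (obtained from $\|\varrho\|_{H^3}$ by Sobolev embedding). Summing over $k$ yields $\tfrac{d}{dt}\mathcal{E}_N + c\,\mathcal{D}_N \le C\sqrt{\mathcal{E}_N}\,\mathcal{D}_N$, which closes for small $\delta_0$, giving \eqref{1-4}.

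\textbf{Stages two and three (proving \eqref{1-5} and \eqref{1-6}).} For \eqref{1-5}, apply $\Lambda^{-s}$ to \eqref{1-2} and pair with $\Lambda^{-s}(\varrho, u, \chi, \nabla\chi)$. The linear symmetric parts cancel after one integration by parts. The nonlinear terms are controlled via the Hardy-Littlewood-Sobolev inequality using $L^p \hookrightarrow \dot H^{-s}$ with $1/p = 1/2 + s/3$ (valid for $0 \le s < 3/2$): for instance $\|\Lambda^{-s}(u\cdot\nabla\varrho)\|_{L^2} \lesssim \|u \cdot \nabla\varrho\|_{L^p}$, bounded by H\"older and Sobolev in terms of the $H^N$ energy. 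Time-integration, using that the Stage one dissipation is in $L^1_t$, gives \eqref{1-5}. For \eqref{1-6}, revisiting Stage one restricted to orders $\ge l$ yields
\[
\frac{d}{dt}\|\nabla^l(\varrho,u,\chi,\nabla\chi)\|_{H^{N-l}}^2 + c \|\nabla^{l+1}(\varrho,u,\chi,\nabla\chi)\|_{H^{N-l}}^2 \le 0
\]
for small data. Together with the uniform $\dot H^{-s}$ bound from Stage two, the Gagliardo-Nirenberg interpolation
\[
\|\nabla^l f\|_{L^2} \lesssim \|\Lambda^{-s} f\|_{L^2}^{1/(l+1+s)}\, \|\nabla^{l+1} f\|_{L^2}^{(l+s)/(l+1+s)}
\]
converts the dissipation into a power of the energy, yielding $\dot y + c\, y^{1+1/(l+s)} \le 0$, whose integration gives $y(t) \lesssim (1+t)^{-(l+s)}$. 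Taking $s = 1$ and inducting on $l$ from $0$ up to $N-1$ produces \eqref{1-6}.

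\textbf{Main obstacle.} I expect the hardest step to be the top-order interaction between the Korteweg-type capillary stress $\phi(\varrho)\mathrm{div}(\nabla\chi\otimes\nabla\chi - \tfrac12 |\nabla\chi|^2 \mathbb{I}_3)$ in the momentum equation and the quasilinear diffusion $\varphi(\varrho)\Delta\chi$ in the phase equation: testing $\nabla^N u$ against the capillary term produces a derivative of $\nabla\chi \otimes \nabla\chi$ whose dangerous part only cancels against a matching contribution from the $\chi$-equation after careful integration by parts. This symmetric cancellation is exactly what forces the functional to carry $\|\nabla\chi\|_{H^N}$ at the same level as $\|u\|_{H^N}$, and why the hypothesis must be $\chi_0 \in H^{N+1}$. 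The same coupling makes the Stage two estimate delicate, since $\Lambda^{-s}$ does not distribute across products; the derivatives must be allocated between the factors by H\"older so that each nonlinear contribution remains time-integrable against the Stage one dissipation.
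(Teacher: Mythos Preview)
Your three-stage outline matches the paper's architecture, and Stage one is essentially what the paper does (Lemmas \ref{lem2.1}, \ref{lem2.1-1}, \ref{lem2.2} plus the Matsumura--Nishida cross term). Two points deserve correction.

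\emph{The ``main obstacle'' is misidentified.} No symmetric cancellation between the capillary stress in the momentum equation and $\varphi(\varrho)\Delta\chi$ in the phase equation is needed or used. The capillary term $\phi(\varrho)\,\mathrm{div}(\nabla\chi\otimes\nabla\chi-\tfrac12|\nabla\chi|^2\mathbb{I}_3)$ is \emph{quadratic} in $\nabla\chi$, so after Kato--Ponce it already carries a factor $\|\nabla\chi\|_{L^3}\lesssim\delta$ and is absorbed directly; the paper estimates it ($I_6$, $K_6$) and $\varphi(\varrho)\Delta\chi$ ($I_8$, $I_{11}$, $K_8$, $K_{11}$) separately. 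The reason $\chi_0\in H^{N+1}$ is required is simply that the capillary stress contains $\nabla^2\chi$, so one level higher is needed to match $u$ in the energy, not because of any cross-cancellation.

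\emph{Stage two has a genuine gap for $s\in(\tfrac12,\tfrac32)$.} Your claim that the nonlinear terms are time-integrable after Hardy--Littlewood--Sobolev works only for $s\in[0,\tfrac12]$. For $s>\tfrac12$ the H\"older split forces a factor like $\|u\|_{L^{3/s}}\lesssim\|u\|_{L^2}^{s-1/2}\|\nabla u\|_{L^2}^{3/2-s}$; the $\|u\|_{L^2}^{s-1/2}$ piece is merely bounded by Stage one, and the remaining dissipative factor has exponent $\tfrac52-s<2$, which is not $L^1_t$ from \eqref{1-4} alone. The paper resolves this by a bootstrap (Lemma \ref{lem3.1}, estimate \eqref{3-1z}): first run Stages two and three with $s=\tfrac12$ to obtain the decay \eqref{7-4}, then feed that decay back into the $s\in(\tfrac12,\tfrac32)$ negative-Sobolev estimate to make the time integral converge. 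Your ``taking $s=1$'' in Stage three is also off: $s$ is a fixed hypothesis, and the decay exponent in \eqref{1-6} should read $(l+s)/2$ (cf.\ Corollary \ref{cor1.1} and \eqref{7-3}); you cannot choose it.
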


Note that the Hardy-Littlewood-Sobolev theorem implies that for $p\in(1,2]$,  $L^{p}(\mathbb{R}^3)\subset\dot{H}^{-s}(\mathbb{R}^3)$ with $s=3(\frac1p-\frac12)\in[0,\frac32)$. Then, on the basis of Theorem \ref{thm1.1}, we easily obtain the following corollary of the optimal decay estimates.
\begin{corollary}
\label{cor1.1}Under the assumptions of Theorem \ref{thm1.1}, if we replace the $\dot{H}^{-s}(\mathbb{R}^3)$ assumption by
$$(\varrho,u_0,\chi_0,\nabla\chi_0)\in L^{p}(\mathbb{R}^3),\quad 1< p\leq 2,
$$
 then the following decay estimate holds:
\begin{equation}
\label{1-7}\begin{aligned}&
\|\nabla^l\varrho(t)\|_{H^{N-l}}+\|\nabla^lu(t)\|_{H^{N-l}}+\|\nabla^l \chi(t)\|_{H^{N-l}}\\&+\|\nabla^{l+1} \chi(t)\|_{H^{N-l}} \leq C(1+t)^{-\left[\frac32\left(\frac1p -\frac12\right)+\frac l2\right]},\quad\hbox{for}~l=0,1,\cdots, N-1.\end{aligned}
\end{equation}
\end{corollary}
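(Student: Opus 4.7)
The plan is to obtain Corollary \ref{cor1.1} as a direct consequence of Theorem \ref{thm1.1} by recasting the $L^p$ hypothesis on the initial data as a negative homogeneous Sobolev hypothesis via the Hardy-Littlewood-Sobolev (HLS) inequality, and then reading off the decay rate from (\ref{1-6}).

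First I would invoke the sharp Sobolev form of HLS already cited just before the corollary: for every $p\in(1,2]$ one has the continuous embedding
\begin{equation*}
L^{p}(\mathbb{R}^3)\hookrightarrow \dot H^{-s}(\mathbb{R}^3),\qquad s=3\left(\tfrac{1}{p}-\tfrac{1}{2}\right)\in\left[0,\tfrac{3}{2}\right),
\end{equation*}
with the quantitative control $\|f\|_{\dot H^{-s}}\lesssim \|f\|_{L^p}$. Applied componentwise, this places $(\varrho_0,u_0,\chi_0,\nabla\chi_0)\in L^p$ automatically into $\dot H^{-s}(\mathbb{R}^3)$ with the specific exponent $s=3(1/p-1/2)$, which lies in the admissible range of Theorem \ref{thm1.1}. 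The $H^3$-smallness (\ref{1-3}) is already built into the hypotheses of the corollary, and the $\dot H^{-s}$ norm itself need not be small for Theorem \ref{thm1.1} to apply; only the embedding constant enters.

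With this value of $s$ fixed, I would then simply invoke (\ref{1-5})--(\ref{1-6}) of Theorem \ref{thm1.1}. The algebraic identity
\begin{equation*}
\tfrac{3}{2}\left(\tfrac{1}{p}-\tfrac{1}{2}\right)+\tfrac{l}{2}\;=\;\tfrac{s+l}{2}
\end{equation*}
shows that the enhanced decay rate produced by (\ref{1-6}) at this $s$ coincides exactly with the $p$-dependent rate $(1+t)^{-\left[\frac{3}{2}(1/p-1/2)+l/2\right]}$ asserted in (\ref{1-7}), and Corollary \ref{cor1.1} follows.

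The only point that truly needs attention is notational rather than analytic: one must ensure that the proof of (\ref{1-6}) is being read with the full $s$-dependent decay rate $(1+t)^{-(s+l)/2}$ for every admissible $s\in[0,3/2)$, which is precisely the ``negative Sobolev norms enhance the decay rates'' mechanism advertised in the abstract. This is the only genuine ingredient needed from Theorem \ref{thm1.1}; once it is in hand, the HLS embedding instantly converts the $L^p$ assumption on the data into the stated $L^p$-dependent decay, with no further energy estimate, interpolation, or smoothing argument required.
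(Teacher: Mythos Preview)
Your proposal is correct and matches the paper's approach exactly: the paper states immediately before the corollary that the Hardy-Littlewood-Sobolev theorem gives $L^p(\mathbb{R}^3)\subset\dot H^{-s}(\mathbb{R}^3)$ with $s=3(\tfrac1p-\tfrac12)\in[0,\tfrac32)$, and then simply invokes Theorem \ref{thm1.1}. Your observation that (\ref{1-6}) must be read with the $s$-dependent rate $(1+t)^{-(l+s)/2}$ is apt---this is indeed what the proof in Section 5 establishes (see (7-3)), even though the exponent printed in (\ref{1-6}) is $-\tfrac{l+1}{2}$; the corollary's rate confirms that the intended exponent is $-\tfrac{l+s}{2}$.
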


 It is worth pointing out that the forms of  incompressible liquid crystal system  \cite{Wei,Gao,GBL} and the incompressible Navier-Stokes-Allen-Cahn system are similar. However, the equation of the direction field $d$ in the compressible nematic liquid crystal system is
\begin{equation}\label{com2xx}
\partial_td+u\cdot\nabla d=\Delta d-(d^3-d),
\end{equation}
which is different from   $\chi$ in compressible Navier-Stokes-Allen-Cahn equations:
\begin{equation}\label{com2}
 \partial_t(\rho\chi)+\hbox{div}(\rho\chi u)= \frac1{\rho} \left[ \ell\Delta\chi-\rho(\chi^3-\chi)\right].
\end{equation}
Since the principle part of (\ref{com2}) is a nonlinear function $-\frac{\ell}{\rho}\Delta\chi$, we can't use the tools of the dissipative equation with linear principle part to  study  the properties of this equation. Moreover, the linear part of the derivative of double well potential also bring trouble to us. When we study the problem in bounded domain, it is easy to control the linear term by using Sobolev's embedding theorem and the a priori estimates. However, in the whole space $\mathbb{R}^3$, since the compactness properties of Sobolev's embedding theorem is losing and there's no linear principle part, it is difficult to control this term.
In order to overcome those difficulties, one multiply $\frac1{\rho}$ to both sides of (\ref{com2}), borrow a linear term from the right hand side of (\ref{com2}), rewrite this equation as a second order PDE
\begin{equation}\label{com3}
\chi_t-\Delta\chi =-u\cdot\nabla\chi-\varphi(\varrho)\Delta\chi-\phi(\varrho)(\chi^3-\chi),
\end{equation}
where $\varrho=\rho-1$, $\phi(\varrho)=\frac1{\varrho+1}$ and $
\varphi(\varrho)=\frac{\varrho(\varrho+2)}{(\varrho+1)^2}$. It is worth pointing out that there exists a linear principle part $-\Delta\chi$ in (\ref{com3}). Then, by using pure energy method, we obtain the higher order a priori estimates for strong solutions of system (\ref{1-2}). On the basis of the standard continuity argument, one obtain the global well-posedness. Moreover, one also establish the negative Sobolev norm estimates for the solutions. By this estimates, we obtain the decay rate of higher order derivatives of strong solutions. We remark that since the decay estimate is same to the heat equation, it is optimal.

The structure of this paper is organized as follows.  In Section 2, we introduce some preliminary results, which are useful to prove our main results. Section 3 is  devoted to establish some   refined energy estimates for the solution. In Section 4, we   derive the evolution of the negative Sobolev norms of the solution. Finally, the proof  of Theorem \ref{thm1.1}  is postponed in Section 5.

\section{Preliminaries}

In this section, we introduce some helpful results in $\mathbb{R}^3$.

First of all,  the following Kato-Ponce inequality   is of great importance in our paper.
\begin{lemma}[\cite{KP}]\label{Kato}
Let $1<p<\infty$, $s>0$. There exists a positive constant $C$ such that
\begin{equation}
\label{K-1}
\|\nabla^s(fg)-f\nabla^sg\|_{L^p}\leq C(\|\nabla f\|_{L^{p_1}}\|\nabla^{s-1}g\|_{L^{p_2}}+\|\nabla^sf\|_{L^{q_1}}\|g\|_{L^{q_4}}),
\end{equation}
and
\begin{equation}
\label{K-2}
\|\nabla^s(fg)\|_{L^p}\leq C(\|f\|_{L^{p_1}}\|\nabla^sg\|_{L^{p_2}}+\|\nabla^sf\|_{L^{q_1}}\|g\|_{L^{q_2}},
\end{equation}
where $p_2,q_2\in(1,\infty)$ satisfying $\frac1p=\frac1{p_1}+\frac1{p_2}=\frac1{q_1}+\frac1{q_2}$.
\end{lemma}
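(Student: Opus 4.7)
The plan is to prove both inequalities via the Littlewood-Paley decomposition together with Bony's paraproduct calculus, which is the standard modern route to the Kato-Ponce commutator and product estimates. Fix a dyadic partition of unity $1 = \sum_{j \in \mathbb{Z}} \varphi_j(\xi)$ with $\varphi_j$ supported in an annulus of size $2^j$, and let $\Delta_j$ denote the associated Littlewood-Paley projectors with $S_j = \sum_{k \le j-1} \Delta_k$. Recall Bony's decomposition $fg = T_f g + T_g f + R(f,g)$, where $T_f g = \sum_j S_{j-1} f \, \Delta_j g$ is the paraproduct and $R(f,g) = \sum_{|j-k|\le 1} \Delta_j f \, \Delta_k g$ is the remainder.

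For the product estimate \eqref{K-2}, I would split $\nabla^s(fg)$ according to Bony's decomposition. For $\nabla^s T_f g$, use Bernstein's inequality and the spectral localization on each dyadic block: $\|\Delta_j \nabla^s(S_{j-1}f \, \Delta_j g)\|_{L^p} \lesssim 2^{js} \|S_{j-1} f\|_{L^{p_1}} \|\Delta_j g\|_{L^{p_2}}$, and then sum in $j$ using the Fefferman-Stein vector-valued inequality (or Khintchine on sign sums) to produce the bound $\|f\|_{L^{p_1}} \|\nabla^s g\|_{L^{p_2}}$. The symmetric term $\nabla^s T_g f$ gives the other half of the right side, and the remainder $\nabla^s R(f,g)$ is handled by the same dyadic summation, with the output placed on whichever factor is convenient (using $s>0$ to ensure summability on the low-frequency side).

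For the commutator estimate \eqref{K-1}, the key cancellation sits in the piece $\nabla^s T_f g - T_f \nabla^s g$, i.e.\ a commutator between $f$ (placed at low frequencies) and the Fourier multiplier $|\xi|^s$ acting on $g$ (placed at high frequencies). On each block this is
\[
[\nabla^s, S_{j-1} f]\,\Delta_j g,
\]
and by a first-order Taylor expansion of the symbol $|\xi|^s$ around the centre of the annulus, this commutator is roughly $2^{j(s-1)}$ times a pointwise product of $\nabla f$ (at low frequencies) and $\Delta_j g$, so Bernstein and dyadic summation give $\|\nabla f\|_{L^{p_1}}\|\nabla^{s-1} g\|_{L^{p_2}}$. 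The remaining pieces $T_{\nabla^s f} g$ and $\nabla^s R(f,g) - R(f,\nabla^s g)$ contribute $\|\nabla^s f\|_{L^{q_1}}\|g\|_{L^{q_2}}$ by the argument already used for \eqref{K-2}, and the term $f \nabla^s g - T_f \nabla^s g = T_{\nabla^s g} f + R(f, \nabla^s g)$ again falls under the same scheme.

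The main obstacle is the commutator block $[\nabla^s, S_{j-1}f]\Delta_j g$: one cannot just pull $\nabla^s$ out, and the symbol $|\xi|^s$ is not a Calder\'on-Zygmund multiplier of convolution type. The cleanest way I would execute this is to realize the commutator as an integral operator with kernel $K_j(x,y) = 2^{3j}\check{\varphi_j}(2^j(x-y))\bigl(|\xi|^s\text{-type symbol difference}\bigr)$ and use a mean-value/Taylor estimate on the multiplier together with the Coifman-Meyer bilinear multiplier theorem (or a direct Calder\'on-Zygmund argument on each block) to extract the gain of one derivative. Everything else (summation in $j$, interpolation of norms, verification of the index relation $\tfrac1p = \tfrac1{p_1}+\tfrac1{p_2} = \tfrac1{q_1}+\tfrac1{q_2}$) is routine once the commutator gain has been established.
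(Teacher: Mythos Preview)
The paper does not actually prove this lemma: it is stated with a citation to Kato--Ponce \cite{KP} and then used as a black box throughout Sections~3--5. So there is no ``paper's own proof'' to compare against; the authors simply import the result from the literature.

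Your Littlewood--Paley/paraproduct outline is the standard modern route and is essentially correct as a sketch. A couple of remarks if you intend to flesh it out. First, the original Kato--Ponce argument did not go through Bony's decomposition; it proceeded via a direct analysis of the commutator with the Bessel potential operator using Coifman--Meyer type bilinear estimates. Your paraproduct approach is cleaner and is now the textbook method (see e.g.\ Grafakos or Bahouri--Chemin--Danchin), so this is a genuine alternative that buys you more flexibility with the indices and a more transparent handling of the fractional case. Second, in your treatment of the commutator block $[\nabla^s, S_{j-1}f]\Delta_j g$ you invoke a ``Taylor expansion of the symbol around the centre of the annulus''; this is morally right but the precise execution usually goes through writing the symbol difference $|\xi+\eta|^s - |\xi|^s$ as an integral over $[0,1]$ of $s|\xi+t\eta|^{s-2}(\xi+t\eta)\cdot\eta$, which is what produces the factor $\nabla f$ and the gain of $2^{j(s-1)}$ cleanly. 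Finally, note that the lemma as stated in the paper has a typo ($q_4$ in \eqref{K-1} should be $q_2$), which you have implicitly corrected.
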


The following Gagliardo-Nirenberg inequality was proved in \cite{Nirenberg}.
\begin{lemma}[\cite{Nirenberg}]
\label{lem2.1A}
Let $0\leq m,\alpha\leq l$, then we have
\begin{equation}\label{2-1A}
\|\nabla^{\alpha}f\|_{L^p}\lesssim\|\nabla^mf\|_{L^q}^{1-\theta}\|\nabla^lf\|_{L^r}^{\theta},
\end{equation}
where $\theta\in[0,1]$ and $\alpha$ satisfies
\begin{equation}\label{2-2A}
\frac{\alpha}3-\frac1p=\left(\frac m3-\frac1q\right)(1-\theta)+\left(\frac l3-\frac1r\right)\theta.
\end{equation}
Here, when $p=\infty$, we require that $0<\theta<1$.
\end{lemma}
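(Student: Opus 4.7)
My plan is to exploit the fact that inequality (\ref{2-1A}) is scale-invariant under $f(x)\mapsto f(\lambda x)$: each of the three norms transforms by a power of $\lambda$, and the constraint (\ref{2-2A}) is precisely what is needed so that both sides of (\ref{2-1A}) carry the same power. This observation both motivates the form of (\ref{2-2A}) and means that after rescaling I may assume $\|\nabla^m f\|_{L^q}=\|\nabla^l f\|_{L^r}=1$, reducing the task to a uniform bound.

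The substantive step is a Littlewood--Paley decomposition $f=\sum_{j\in\mathbb{Z}}\Delta_j f$, where $\Delta_j f$ has Fourier support in an annulus $|\xi|\sim 2^j$. On each dyadic block Bernstein's inequality supplies
\begin{equation*}
\|\nabla^{\alpha}\Delta_j f\|_{L^p}\lesssim 2^{j\alpha}\|\Delta_j f\|_{L^p},\qquad
\|\Delta_j f\|_{L^p}\lesssim 2^{3j(\frac1s-\frac1p)}\|\Delta_j f\|_{L^s}\quad\text{for } s\leq p,
\end{equation*}
together with $\|\Delta_j f\|_{L^s}\lesssim 2^{-jk}\|\nabla^k f\|_{L^s}$ by the Fourier characterization. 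Applied with $(s,k)=(q,m)$ and with $(s,k)=(r,l)$, these yield two pointwise-in-frequency bounds of the form $\|\nabla^{\alpha}\Delta_j f\|_{L^p}\lesssim 2^{ja}\|\nabla^m f\|_{L^q}$ and $\|\nabla^{\alpha}\Delta_j f\|_{L^p}\lesssim 2^{jb}\|\nabla^l f\|_{L^r}$; the scaling identity (\ref{2-2A}) forces $a>0>b$ in the non-endpoint regime, so the first estimate is efficient at low frequencies and the second at high frequencies.

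I would then sum the first bound over $j\leq N$ and the second over $j>N$, yielding two geometric series, and optimize $N$ in terms of the ratio $\|\nabla^l f\|_{L^r}/\|\nabla^m f\|_{L^q}$. A direct computation shows the optimum partitions the exponents precisely in the proportion $(1-\theta):\theta$ prescribed by (\ref{2-2A}), producing (\ref{2-1A}). The main (and essentially only) delicate point will be the endpoint $p=\infty$, where the naive bound $\|\nabla^\alpha f\|_{L^\infty}\leq\sum_j\|\nabla^\alpha\Delta_j f\|_{L^\infty}$ needs the two geometric series to be strictly summable on each side of the cutoff; this is exactly what the explicit restriction $0<\theta<1$ in the statement enforces, so no further hypothesis is required. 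All intermediate $p\in(1,\infty)$ are handled by the same argument without restriction on $\theta$ at $0$ or $1$, the boundary cases being immediate.
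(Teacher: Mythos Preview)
The paper does not prove this lemma; it is quoted from Nirenberg \cite{Nirenberg} as a preliminary tool, so there is no argument in the paper to compare yours against.

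Your Littlewood--Paley approach is a legitimate modern route and the outline is correct in spirit: the scaling identity (\ref{2-2A}) is exactly what makes the dyadic exponents $a,b$ satisfy $(1-\theta)a+\theta b=0$, and optimizing the cutoff frequency produces the product form. There is, however, one genuine gap. The Bernstein step $\|\Delta_j f\|_{L^p}\lesssim 2^{3j(1/s-1/p)}\|\Delta_j f\|_{L^s}$ you invoke requires $s\le p$, so your argument as written covers only the regime $q,r\le p$. The general Gagliardo--Nirenberg inequality permits $p<q$ or $p<r$ --- for instance $\|\nabla f\|_{L^4}\lesssim\|f\|_{L^\infty}^{1/2}\|\nabla^2 f\|_{L^2}^{1/2}$ in $\mathbb{R}^3$, where $q=\infty>p=4$ --- and in such cases one half of your dyadic sum is not controlled by Bernstein in the direction you need. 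Handling those cases requires an additional ingredient (duality, or Nirenberg's original real-variable method). This is \emph{not} the $p=\infty$ endpoint issue you already flagged; it is an independent restriction on the Lebesgue exponents that your sketch does not address. For the specific invocations of the lemma in this paper (all $L^2$--based with $p\ge 2$) your method would suffice, but it does not establish the lemma in the generality stated.
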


We recall the following commutator estimate:
\begin{lemma}[\cite{W}]\label{lemcom}
Let $m\geq1$ be an integer and define the commutator
\begin{equation}\label{co-1}
[\nabla^m,f]g=\nabla^m(fg)-f\nabla^mg.
\end{equation}
Then, the following inequality holds:
\begin{equation}\label{co-2}
\|[\nabla^m,f]g\|_{L^p}\lesssim\|\nabla f\|_{L^{p_1}}\|\nabla^{m-1}g\|_{L^{p_2}}+\|\nabla^mf\|_{L^{p_3}}\|g\|_{L^{p_4}},
\end{equation}
where $p,~p_2,~p_3\in(1,\infty)$ and
\begin{equation}\label{co-3}
\frac1p=\frac1{p_1}+\frac1{p_2}=\frac1{p_3}+\frac1{p_4}.
\end{equation}
\end{lemma}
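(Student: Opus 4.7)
The plan is to exploit the fact that $m$ is a positive integer and expand the commutator directly via the Leibniz rule, then bound each resulting term by a combination of H\"older's inequality and the Gagliardo--Nirenberg interpolation inequality from Lemma \ref{lem2.1A}.

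Concretely, for integer $m\geq 1$ the Leibniz rule gives the finite sum
\begin{equation*}
[\nabla^m,f]g \;=\; \nabla^m(fg) - f\nabla^m g \;=\; \sum_{k=1}^{m}\binom{m}{k}\,\nabla^k f \cdot \nabla^{m-k}g,
\end{equation*}
so it suffices to estimate $\|\nabla^k f \cdot \nabla^{m-k}g\|_{L^p}$ for each $1\leq k\leq m$. The two endpoint terms are handled directly by H\"older's inequality: for $k=1$ one obtains the bound $\|\nabla f\|_{L^{p_1}}\|\nabla^{m-1}g\|_{L^{p_2}}$, and for $k=m$ the bound $\|\nabla^m f\|_{L^{p_3}}\|g\|_{L^{p_4}}$, which are precisely the two summands on the right-hand side of (\ref{co-2}).

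For the intermediate indices $2\leq k\leq m-1$ (relevant only when $m\geq 3$), I would introduce auxiliary exponents $a_k,b_k\in(1,\infty)$ with $1/p = 1/a_k + 1/b_k$ and apply H\"older to reduce to $\|\nabla^k f\|_{L^{a_k}}\|\nabla^{m-k}g\|_{L^{b_k}}$. Gagliardo--Nirenberg then interpolates
\begin{equation*}
\|\nabla^k f\|_{L^{a_k}} \lesssim \|\nabla f\|_{L^{p_1}}^{(m-k)/(m-1)}\|\nabla^m f\|_{L^{p_3}}^{(k-1)/(m-1)},
\end{equation*}
\begin{equation*}
\|\nabla^{m-k}g\|_{L^{b_k}} \lesssim \|g\|_{L^{p_4}}^{(k-1)/(m-1)}\|\nabla^{m-1}g\|_{L^{p_2}}^{(m-k)/(m-1)},
\end{equation*}
once the exponents $a_k,b_k$ are chosen compatibly with the scaling relation (\ref{2-2A}). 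Multiplying produces the weighted geometric mean of the two products $\|\nabla f\|_{L^{p_1}}\|\nabla^{m-1}g\|_{L^{p_2}}$ and $\|\nabla^m f\|_{L^{p_3}}\|g\|_{L^{p_4}}$ with conjugate powers $(m-k)/(m-1)$ and $(k-1)/(m-1)$, and Young's inequality absorbs this into the sum of these two terms.

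The main technical point, which is really a bookkeeping exercise rather than a conceptual obstacle, is verifying that for every intermediate $k$ the auxiliary exponents $a_k,b_k$ can be chosen simultaneously to satisfy (i) the H\"older constraint $1/p=1/a_k+1/b_k$, (ii) the two Gagliardo--Nirenberg scaling identities coming from (\ref{2-2A}) at the interpolation parameter $\theta=(k-1)/(m-1)$, and (iii) the hypothesis $1/p=1/p_1+1/p_2=1/p_3+1/p_4$. Solving explicitly for $1/a_k$ and $1/b_k$ as convex combinations of $1/p_1, 1/p_3$ and $1/p_2, 1/p_4$ respectively shows the three conditions are consistent, and the admissibility $a_k,b_k\in(1,\infty)$ follows from the assumption $p,p_2,p_3\in(1,\infty)$. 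Once this is in place, combining the endpoint H\"older bounds with the interpolated intermediate bounds yields (\ref{co-2}).
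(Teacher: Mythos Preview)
The paper does not supply its own proof of this lemma; it is quoted verbatim from reference~[W] and stated without argument. Your proposal gives precisely the standard proof: Leibniz expansion of the commutator (valid since $m$ is an integer), H\"older at the two endpoints $k=1$ and $k=m$, and Gagliardo--Nirenberg interpolation combined with Young's inequality for the intermediate terms. The exponent bookkeeping you describe is correct---in particular, with $\theta=(k-1)/(m-1)$ one obtains $1/a_k$ and $1/b_k$ as the corresponding convex combinations of $(1/p_1,1/p_3)$ and $(1/p_2,1/p_4)$, and the H\"older constraint $1/a_k+1/b_k=1/p$ then follows immediately from the hypothesis $1/p=1/p_1+1/p_2=1/p_3+1/p_4$. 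The admissibility $a_k,b_k\in(1,\infty)$ for $2\leq k\leq m-1$ also follows since $1/a_k$ and $1/b_k$ are strict convex combinations of values in $[0,1)$, at least one of which lies in $(0,1)$ by the assumption $p_2,p_3\in(1,\infty)$. So your argument is complete and is exactly how this estimate is normally proved for integer $m$; there is nothing further in the paper to compare against.
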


Wang \cite{W}, Wei, Li and Yao \cite{Y} introduced the following result:
\begin{lemma}
\label{Wlemma}
Suppose that $\|\varrho\|_{L^{\infty}}\leq 1$ and $p>1$. Let $f(\varrho)$ be a smooth function of $\varrho$ with bounded derivatives of any order, then for any integer $n\geq1$, the following inequality holds:
\begin{equation}\label{WY}
\|\nabla^m(f(\varrho))\|_{L^p}\lesssim\|\nabla^m\varrho\|_{L^p}.
\end{equation}\end{lemma}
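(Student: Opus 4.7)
The plan is to combine Faà di Bruno's formula with the Gagliardo-Nirenberg interpolation inequality (Lemma \ref{lem2.1A}) to reduce $\|\nabla^m(f(\varrho))\|_{L^p}$ to $\|\nabla^m\varrho\|_{L^p}$, using the hypotheses $\|\varrho\|_{L^\infty}\le 1$ and that $f$ has bounded derivatives of every order to absorb the nonlinear factors into harmless constants.

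First, I would expand via the chain rule iteratively (Faà di Bruno):
\[
\nabla^m(f(\varrho)) \;=\; \sum_{k=1}^{m}\; \sum_{\alpha_1+\cdots+\alpha_k=m,\;\alpha_j\ge 1} c_{k,\alpha}\, f^{(k)}(\varrho)\, \nabla^{\alpha_1}\varrho\cdots\nabla^{\alpha_k}\varrho.
\]
Because $|\varrho|\le 1$ pointwise and $f^{(k)}$ is bounded, each factor $f^{(k)}(\varrho)$ is uniformly bounded by a constant depending only on $f$. It therefore suffices to estimate each product $\nabla^{\alpha_1}\varrho\cdots\nabla^{\alpha_k}\varrho$ in $L^p$.

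Next, I would distribute via Hölder's inequality with exponents $p_j=\frac{pm}{\alpha_j}$, which satisfy $p_j\in[p,\infty)$ and $\sum_{j=1}^k 1/p_j=\sum_{j=1}^k \alpha_j/(pm)=1/p$. Then I would apply Lemma \ref{lem2.1A} to each factor, interpolating $\nabla^{\alpha_j}\varrho$ between the endpoints $\varrho\in L^\infty$ (taking $m=0$, $q=\infty$ in (\ref{2-2A})) and $\nabla^m\varrho\in L^p$ (taking $l=m$, $r=p$), with $\theta_j=\alpha_j/m$. A direct check of (\ref{2-2A}) with this choice of $\theta_j$ yields precisely $1/p_j=\alpha_j/(pm)$, so the interpolation is admissible and
\[
\|\nabla^{\alpha_j}\varrho\|_{L^{p_j}}\;\lesssim\;\|\varrho\|_{L^\infty}^{1-\alpha_j/m}\,\|\nabla^m\varrho\|_{L^p}^{\alpha_j/m}.
\]
Multiplying over $j$ and using $\sum_j \alpha_j/m=1$ together with $\|\varrho\|_{L^\infty}\le 1$ gives $\|\nabla^{\alpha_1}\varrho\cdots\nabla^{\alpha_k}\varrho\|_{L^p}\lesssim\|\nabla^m\varrho\|_{L^p}$ for each Faà di Bruno term; summing over the finitely many multi-indices completes the estimate. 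The borderline case $k=1$ (so $\alpha_1=m$) is trivial since then $\|f'(\varrho)\nabla^m\varrho\|_{L^p}\le \|f'\|_{L^\infty}\|\nabla^m\varrho\|_{L^p}$; in all other terms $\alpha_j<m$, so $\theta_j\in(0,1)$ strictly and the $L^\infty$ endpoint in Lemma \ref{lem2.1A} is allowed.

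The main obstacle is purely bookkeeping: verifying that the Hölder exponents forced by the scaling relation (\ref{2-2A}) are admissible and that the interpolation powers $\theta_j=\alpha_j/m$ sum to exactly $1$, so that the product telescopes into a single factor $\|\nabla^m\varrho\|_{L^p}$. There is no genuine analytic difficulty beyond this arithmetic.
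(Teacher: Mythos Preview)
Your argument is correct and is in fact the standard proof of this estimate: Fa\`a di Bruno reduces matters to products $\nabla^{\alpha_1}\varrho\cdots\nabla^{\alpha_k}\varrho$, H\"older with exponents $p_j=pm/\alpha_j$ splits the $L^p$ norm, and the Gagliardo--Nirenberg interpolation of Lemma~\ref{lem2.1A} with $\theta_j=\alpha_j/m$ between $\|\varrho\|_{L^\infty}$ and $\|\nabla^m\varrho\|_{L^p}$ collapses the product to a single factor $\|\nabla^m\varrho\|_{L^p}$, using $\|\varrho\|_{L^\infty}\le 1$ to discard the leftover $\|\varrho\|_{L^\infty}^{k-1}$.

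The paper itself gives no proof of this lemma at all: it is quoted as a known result from Wang~\cite{W} and Wei--Li--Yao~\cite{Y}, and the proofs in those references proceed exactly along the lines you describe. So there is nothing to compare against in the paper; your proposal simply supplies the omitted argument.
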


We also introduce the Hardy-Littlewood-Sobolev theorem, which implies the following $L^p$ type inequality.
\begin{lemma}[\cite{Stein,15}]
\label{lem2.3A}
Let $0\leq s<\frac32$, $1<p\leq 2$ and $\frac12+\frac s3=\frac1p$, then
\begin{equation}
\label{2-4A}
\|f\|_{\dot{H}^{-s}}\lesssim\|f\|_{L^p}.
\end{equation}
\end{lemma}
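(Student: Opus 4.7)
The plan is to reduce the statement to the classical Hardy--Littlewood--Sobolev inequality for the Riesz potential, which is the standard route for embeddings of the form $L^p \hookrightarrow \dot H^{-s}$ on $\mathbb{R}^n$. First, I would unpack the norm on the left using Plancherel's identity: since $\dot H^{-s}$ is defined via $\|f\|_{\dot H^{-s}} = \||\xi|^{-s}\hat f(\xi)\|_{L^2}$, and $\Lambda^{-s}$ is by definition the Fourier multiplier with symbol $|\xi|^{-s}$ (see equation \eqref{A9} in the paper's notation), Plancherel gives
\begin{equation*}
\|f\|_{\dot H^{-s}} \;=\; \||\xi|^{-s}\hat f\|_{L^2} \;=\; \|\Lambda^{-s}f\|_{L^2}.
\end{equation*}
Thus the claim $\|f\|_{\dot H^{-s}} \lesssim \|f\|_{L^p}$ is equivalent to the boundedness $\Lambda^{-s}: L^p(\mathbb{R}^3) \to L^2(\mathbb{R}^3)$.

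Next, I would realize $\Lambda^{-s}$ as convolution with a Riesz kernel. For $0<s<3$ one has the representation
\begin{equation*}
\Lambda^{-s}f(x) \;=\; c_s \int_{\mathbb{R}^3} \frac{f(y)}{|x-y|^{3-s}}\,dy,
\end{equation*}
which follows by computing the inverse Fourier transform of $|\xi|^{-s}$ as a tempered distribution (the constant $c_s$ is explicit in terms of Gamma functions). The borderline case $s=0$ is trivial since then $p=2$ and the inequality is just $\|f\|_{L^2}\le\|f\|_{L^2}$.

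The core input is then the classical Hardy--Littlewood--Sobolev inequality: if $0<s<n$, $1<p<q<\infty$, and $\tfrac{1}{q} = \tfrac{1}{p} - \tfrac{s}{n}$, then
\begin{equation*}
\Bigl\|\int_{\mathbb{R}^n}\frac{f(y)}{|x-y|^{n-s}}\,dy\Bigr\|_{L^q} \;\lesssim\; \|f\|_{L^p}.
\end{equation*}
Specializing to $n=3$ and $q=2$ gives exactly the scaling constraint $\tfrac{1}{p} = \tfrac{1}{2} + \tfrac{s}{3}$ of the lemma, and combined with the Plancherel identity and the Riesz potential representation of the previous step it yields the claim. The hypothesis $0\le s<\tfrac{3}{2}$ is precisely what keeps $p$ in the admissible range $(1,2]$.

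The genuine work is hidden inside the Hardy--Littlewood--Sobolev inequality itself, which I would not reprove from scratch; the standard proof proceeds either via Marcinkiewicz interpolation from weak-type $(p,q)$ bounds on the Riesz potential (the weak-type bound is obtained by splitting the kernel into its small-scale and large-scale parts and optimizing the cutoff radius, in the spirit of Hedberg's trick using the Hardy--Littlewood maximal function), or via symmetric decreasing rearrangement and the layer-cake formula. This classical result is exactly what is cited from Stein's monograph, so invoking it closes the proof; the only conceptual step that is entirely mine is the Plancherel identification of $\|\cdot\|_{\dot H^{-s}}$ with the $L^2$ norm of the Riesz potential.
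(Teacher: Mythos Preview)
Your argument is correct and is exactly the standard derivation of this embedding. The paper itself gives no proof of this lemma; it is stated as a preliminary result with a citation to Stein and Grafakos, so there is nothing to compare against beyond noting that the Hardy--Littlewood--Sobolev inequality you invoke is precisely the content of those references.
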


The  special Sobolev interpolation lemma will be used in the proof of Theorem \ref{thm1.1}.
\begin{lemma}[\cite{W,Stein}]
\label{lem2.2A}
Let $s,k\geq0$ and $l\geq0$, then
\begin{equation}
\label{2-3A}
\|\nabla^lf\|_{L^2}\leq\|\nabla^{l+k}f\|_{L^2}^{ 1-\theta }\|f\|_{\dot{H}^{-s}}^{ \theta },\quad\hbox{with}~\theta=\frac k{l+k+s}.
\end{equation}
\end{lemma}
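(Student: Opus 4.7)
The plan is to prove the inequality by Fourier analysis combined with H\"older's inequality applied on the frequency side. By Parseval's identity I would first rewrite
$$\|\nabla^lf\|_{L^2}^2 = \int_{\mathbb{R}^3} |\xi|^{2l}|\hat{f}(\xi)|^2\, d\xi.$$
The goal is then to split the integrand as a product of two pieces whose integrals reproduce the two norms appearing on the right-hand side of the lemma.

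The key algebraic step is to look for $\theta\in[0,1]$ such that
$$|\xi|^{2l}|\hat{f}(\xi)|^2 = \bigl(|\xi|^{2(l+k)}|\hat{f}(\xi)|^2\bigr)^{1-\theta} \cdot \bigl(|\xi|^{-2s}|\hat{f}(\xi)|^2\bigr)^{\theta}.$$
Matching the powers of $|\hat{f}(\xi)|^2$ is automatic since $(1-\theta)+\theta=1$, while matching the powers of $|\xi|$ gives $2l = 2(l+k)(1-\theta) - 2s\theta$, which solves to exactly $\theta = \frac{k}{l+k+s}$, the value stated in the lemma. Integrating the identity above and applying H\"older's inequality with conjugate exponents $\frac{1}{1-\theta}$ and $\frac{1}{\theta}$ yields
$$\int_{\mathbb{R}^3} |\xi|^{2l}|\hat{f}(\xi)|^2 d\xi \leq \left(\int_{\mathbb{R}^3} |\xi|^{2(l+k)}|\hat{f}(\xi)|^2 d\xi\right)^{1-\theta} \left(\int_{\mathbb{R}^3} |\xi|^{-2s}|\hat{f}(\xi)|^2 d\xi\right)^{\theta}.$$
Recognizing the two integrals on the right as $\|\nabla^{l+k}f\|_{L^2}^2$ and $\|f\|_{\dot{H}^{-s}}^2$ respectively (by Parseval and by the definition of the homogeneous negative Sobolev norm given in the Notation box), and then taking square roots, produces the claimed interpolation bound.

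There is essentially no hard step: the main work is identifying the correct exponent $\theta$ by comparing powers of $|\xi|$. The only minor issue is the boundary cases $\theta\in\{0,1\}$, corresponding to $k=0$ (in which the inequality collapses to $\|\nabla^l f\|_{L^2}\leq\|\nabla^l f\|_{L^2}$) and the degenerate case $l+s=0$; both are handled trivially or by a limiting argument, and H\"older's inequality is applied on $(0,1)$ in the generic case.
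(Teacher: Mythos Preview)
Your argument is correct: the Plancherel identity together with H\"older's inequality on the frequency side, with the exponent $\theta$ determined by matching powers of $|\xi|$, is precisely the standard proof of this interpolation inequality. Note that the paper does not actually give its own proof of this lemma --- it is quoted with citation to \cite{W,Stein} --- and the argument you wrote is exactly the one given in those references, so there is nothing further to compare.
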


\section{Energy estimates}

In this section, we  derive the a priori energy estimates for system (\ref{1-2}). Suppose that there exists a small positive constant $\delta>0$ such that
\begin{equation}
\label{2-1}
\sqrt{\mathcal{E}_0^3(t)}=\|\varrho(t)\|_{H^3}+\|u(t)\|_{H^3}+\|\chi(t)\|_{H^3}+\|\nabla\chi(t)\|_{H^3}\leq\delta,
\end{equation}
which, together with Sobolev's inequality, yields directly
$$
\frac12\leq\varrho+1\leq2.
$$
Simple calculations show that
\begin{equation}\label{x1}
|h(\varrho)|,|g(\varrho)| |\leq C|\varrho|,
\end{equation}
and
\begin{equation}\label{x2}
|\phi^{(l)}(\varrho)|,|\varphi^{(l)}(\varrho)|,|h^{(k)}(\varrho)|,|g^{(k)}(\varrho)|\leq C~~\hbox{for~any}~l\geq0,~k\geq1.
\end{equation}

Next, we establish the following energy estimates including $\varrho$, $u$ and $\chi$ themselves.
\begin{lemma}
\label{lem2.1}
If $\sqrt{\mathcal{E}_0^3(t)}\leq\delta$, then for $k=0,1,\cdots, N-1$, we have
\begin{equation}
\label{2-2}
\begin{aligned}
&\frac d{dt}(\|\nabla^k\varrho\|_{L^2}^2+\|\nabla^ku\|_{L^2}^2+\|\nabla^k\chi\|_{L^2}^2+\|\nabla^k\nabla\chi\|_{ L^2}^2)
\\&+C(\|\nabla^{k+1}u\|_{L^2}^2+\|\nabla^{k+1}\chi\|_{L^2}^2+\|\nabla^{k+1}\nabla\chi\|_{L^2}^2)
\\
\lesssim& (\delta+\delta^3)
(
\|\nabla^{k +1}\varrho\|_{L^2}^2+\|\nabla^{k+1}u\|_{L^2}^2+\|\nabla^{k+1}\chi\|_{L^2}^2+\|\nabla^{k+1}\nabla\chi\|_{L^2}^2).
\end{aligned}
\end{equation}
\end{lemma}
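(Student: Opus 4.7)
The plan is to carry out a layer-by-layer $L^2$ energy estimate for each $k\in\{0,1,\dots,N-1\}$. First I would apply $\nabla^k$ to each of the three perturbed equations in (\ref{1-2}) and test against $\nabla^k\varrho$, $\nabla^k u$, and $\nabla^k\chi$ respectively; then apply $\nabla^{k+1}$ to the $\chi$-equation and test against $\nabla^{k+1}\chi$ to obtain a separate dissipative estimate for $\nabla\chi$. After integration by parts, the viscous part of (\ref{1-2})$_2$ and the Laplacian of (\ref{1-2})$_3$ produce the positive dissipations $\mu\|\nabla^{k+1}u\|_{L^2}^2+(\mu+\lambda)\|\nabla^k\,\mathrm{div}\,u\|_{L^2}^2$, $\|\nabla^{k+1}\chi\|_{L^2}^2$ and $\|\nabla^{k+2}\chi\|_{L^2}^2$. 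The key algebraic observation is the exact cancellation of the symmetric pair
\[
\langle\nabla^k\,\mathrm{div}\,u,\nabla^k\varrho\rangle+\langle\nabla^{k+1}\varrho,\nabla^k u\rangle=0
\]
produced by the continuity/pressure coupling, which is why no dissipation on $\varrho$ appears on the left-hand side of (\ref{2-2}) and why $\|\nabla^{k+1}\varrho\|_{L^2}^2$ is merely tolerated on the right with the small prefactor $\delta+\delta^3$.

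Next I would estimate the nonlinear right-hand sides using one uniform template: every product is expanded via the Kato--Ponce inequality (Lemma \ref{Kato}), every composition $h(\varrho), g(\varrho), \phi(\varrho), \varphi(\varrho)$ is replaced by a pure power of $\nabla^m\varrho$ via Lemma \ref{Wlemma}, and low-derivative factors are placed in $L^\infty$ through $H^2\hookrightarrow L^\infty$. The Gagliardo--Nirenberg interpolation (Lemma \ref{lem2.1A}) is then used to trade intermediate derivatives for endpoint ones, so that each integral is bounded by a product of a small quantity --- one of $\|\varrho\|_{L^\infty}$, $\|u\|_{L^\infty}$, $\|\chi\|_{L^\infty}$, $\|\nabla\chi\|_{L^\infty}$, all controlled by $\sqrt{\mathcal E_0^3}\le\delta$ --- and two dissipation-level norms. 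A single application of Young's inequality then recovers exactly the right-hand side of (\ref{2-2}). The cubic potential term $\phi(\varrho)\chi^3$ is responsible for the $\delta^3$ coefficient, since it carries two $L^\infty$ factors on $\chi$ and hence an extra $\delta^2$ multiplier beyond the basic $\delta$.

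The main obstacle is the perturbative second-order terms $h(\varrho)(\mu\Delta u+(\mu+\lambda)\nabla\,\mathrm{div}\,u)$ in the momentum equation and $\varphi(\varrho)\Delta\chi$ in the $\chi$-equation, because after $\nabla^k$ they carry $k+2$ derivatives on $u$ or $\chi$, i.e.\ one derivative above the natural dissipation level. The remedy is the splitting
\[
\nabla^k\bigl(h(\varrho)\Delta u\bigr)=h(\varrho)\nabla^k\Delta u+[\nabla^k,h(\varrho)]\Delta u,
\]
after which one integration by parts against $\nabla^k u$ transfers a derivative onto the test function, reducing the first piece to $\lesssim\delta\|\nabla^{k+1}u\|_{L^2}^2$ modulo remainders involving $\nabla h(\varrho)$, while the commutator is handled by Lemma \ref{lemcom} together with Gagliardo--Nirenberg. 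The capillary stress $\phi(\varrho)\,\mathrm{div}(\nabla\chi\otimes\nabla\chi-\tfrac12|\nabla\chi|^2\mathbb{I}_3)$ is treated analogously: one integration by parts shifts a derivative onto $\nabla^k u$ so that the nonlinear factor only contains $\nabla\chi$ and $\nabla^2\chi$, and Kato--Ponce with Hölder exponents $\tfrac13+\tfrac16=\tfrac12$ then puts the top-derivative factor in $L^2$ and the remaining factors in $L^3$ and $L^6$. Summing all contributions for each $k=0,1,\ldots,N-1$ yields exactly the claimed bound (\ref{2-2}).
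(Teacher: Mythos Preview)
Your proposal is correct and follows essentially the same strategy as the paper: derive the energy identity with twelve nonlinear terms $I_1,\dots,I_{12}$, then bound each via Kato--Ponce, Lemma~\ref{Wlemma}, Gagliardo--Nirenberg interpolation, and Sobolev embedding, paying special attention to the second-order perturbative pieces $h(\varrho)\Delta u$ and $\varphi(\varrho)\Delta\chi$. The paper's tactics differ only in detail---it systematically uses the $L^6\times L^{6/5}$ H\"older pairing (so that $\|\nabla^k u\|_{L^6}\lesssim\|\nabla^{k+1}u\|_{L^2}$ plays the role of your integration-by-parts device) and treats $I_4$ by cases $k=0,1,\ge2$ rather than through an explicit commutator splitting---but the underlying mechanism is the same.
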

\begin{proof}
Taking $k$-th spatial derivatives to (\ref{1-1})$_1$, (\ref{1-1})$_2$ and (\ref{1-1})$_3$, $k+1$th spatial derivatives to (\ref{1-1})$_3$, multiplying the resulting identities by $\nabla^k\varrho$, $\nabla^ku$, $\nabla^k\chi$ and $\nabla^{k+1}\chi$ respectively, summing up and then integrating over $\mathbb{R}^3$, we derive that
\begin{equation}
\label{2-3}
\begin{aligned}&
\frac12\frac d{dt}\int_{\mathbb{R}^3}(|\nabla^k\varrho|^2+|\nabla^ku|^2+|\nabla^k\chi|^2+|\nabla^{k+1}\chi|^2)dx
\\
&+\int_{\mathbb{R}^3}(|\nabla^{k+1}u|^2+|\nabla^{k+1}\chi|^2+|\nabla^{k+2}\chi|^2)dx
\\
=&\int_{\mathbb{R}^3}\left[\nabla^k(-\varrho\hbox{div}u-u\cdot\nabla\varrho)\cdot\nabla^k\varrho \right.
\\&
\left.-\nabla^k\left[u\cdot\nabla u+h(\varrho)(\mu\Delta u+(\mu+\lambda)\nabla\hbox{div}u)+g(\varrho)\nabla\varrho+\phi(\varrho)\hbox{div}\left(\nabla\chi\otimes\nabla\chi-\frac{|\nabla\chi|^2}2 \mathbb{I}_3\right)\right]\cdot\nabla^ku \right.
\\
&\left.
+\nabla^k\left(-u\cdot\nabla\chi-\varphi(\varrho)\Delta\chi-\phi(\varrho)(\chi^3-\chi) \right)\cdot\nabla^k\chi  \right.
\\
&\left.
+\nabla^{k+1}\left(-u\cdot\nabla\chi-\varphi(\varrho)\Delta\chi-\phi(\varrho)(\chi^3-\chi)\right)\cdot\nabla^{k+1}\chi  \right]dx
\\
=&\sum_{i=1}^{12}I_i.
\end{aligned}
\end{equation}
  The right hand side terms of (\ref{2-3}) will be estimated one by one in the following. The main idea is that we will carefully interpolate the spatial derivatives between the higher-order derivatives and the lower-order derivatives to bound these nonlinear terms by the right hand side of (\ref{2-2}).
First, for $I_1$, by using Kato-Ponce inequality (Lemma \ref{Kato}), Gagliardo-Nirenberg inequality (Lemma \ref{lem2.1A}) and Sobolev embedding theorem, we can estimate as
\begin{equation}
\begin{aligned}\label{2-4cc}
I_1=&-\int_{\mathbb{R}^3}\nabla^k(\varrho\nabla\cdot u)\cdot\nabla^k\varrho dx
\\
\lesssim&\|\nabla^k\varrho\|_{L^6}\|\nabla^k(\varrho\nabla\cdot u)\|_{L^{\frac65}}
\\
\lesssim&\|\nabla^k\varrho\|_{L^6}(\|\nabla^k\varrho\|_{L^3}\|\nabla u\|_{L^2}+\|\varrho\|_{L^3}\|\nabla^{k+1}u\|_{L^2})
\\
\lesssim&\|\nabla^{k+1}\varrho\|_{L^2}\left(\|\nabla^{k+1}\varrho\|_{L^2}^{\frac k{k+\frac12}}\|\Lambda^{\frac12}\varrho\|_{L^2}^{\frac{\frac12}{k+\frac12}}\|\Lambda^{\frac12}u\|_{L^2}^{\frac k{k+\frac12}}\|\nabla^{k+1}u\|_{L^2}^{\frac{\frac12}{k+\frac12}}+\|\Lambda^{\frac12}\varrho\|_{L^2}\|\nabla^{k+1}u\|_{L^2}\right)
\\
\lesssim&(\|\Lambda^{\frac12}\varrho\|_{L^2}+\|\Lambda^{\frac12}u\|_{L^2})(\|\nabla^{k+1}\varrho\|_{L^2}+\|\nabla^{k+1}u\|_{L^2})\\
\lesssim&\delta(\|\nabla^{k+1}\varrho\|_{L^2}+\|\nabla^{k+1}u\|_{L^2}).
\end{aligned}\end{equation}
Similarly, by using Kato-Ponce inequality (Lemma \ref{Kato}), Gagliardo-Nirenberg inequality (Lemma \ref{lem2.1A}) and Sobolev embedding theorem again, we estimate the terms $I_2$ and $I_3$ as
\begin{equation}
\label{2-4-a1}\begin{aligned}
I_2=&- \int_{\mathbb{R}^3}\nabla^k(u\cdot\nabla\varrho)\cdot\nabla^k\varrho dx
\\
\lesssim&\|\nabla^k\varrho\|_{L^6}\|\nabla^k(u\cdot\nabla\varrho)\|_{L^{\frac65}}
\\
\lesssim&\|\nabla^k\varrho\|_{L^6}(\|\nabla^ku\|_{L^3}\|\nabla\varrho\|_{L^2}+\|u\|_{L^3}\|\nabla^{k+1}\varrho\|_{L^2})
\\
\lesssim&\|\nabla^{k+1}\varrho\|_{L^2}\left(\|\nabla^{k+1}u\|_{L^2}^{\frac k{k+\frac12}}\|\Lambda^{\frac12}u\|_{L^2}^{\frac{\frac12}{k+\frac12}}\|\Lambda^{\frac12}\varrho\|_{L^2}^{\frac k{k+\frac12}}\|\nabla^{k+1}\varrho\|_{L^2}^{\frac{\frac12}{k+\frac12}}+\|\Lambda^{\frac12}u\|_{L^2}\|\nabla^{k+1}\varrho\|_{L^2}\right)
\\
\lesssim&(\|\Lambda^{\frac12}\varrho\|_{L^2}+\|\Lambda^{\frac12}u\|_{L^2})(\|\nabla^{k+1}\varrho\|_{L^2}+\|\nabla^{k+1}u\|_{L^2})\\
\lesssim&\delta(\|\nabla^{k+1}\varrho\|_{L^2}+\|\nabla^{k+1}u\|_{L^2}),
\end{aligned}\end{equation}
and
\begin{equation}
\label{2-4-a2}\begin{aligned}
I_3=&-\int_{\mathbb{R}^3}\nabla^k(u\cdot\nabla u)\cdot\nabla^k udx
\\
\lesssim&\|\nabla^k u\|_{L^6}\|\nabla^k(u\cdot\nabla u)\|_{L^{\frac65}}
\\
\lesssim&\|\nabla^k u\|_{L^6}(\|\nabla^ku\|_{L^3}\|\nabla u\|_{L^2}+\|u\|_{L^3}\|\nabla^{k+1}u\|_{L^2})
\\
\lesssim&\|\nabla^{k+1}u\|_{L^2}\left(\|\nabla^{k+1}u\|_{L^2}^{\frac k{k+\frac12}}\|\Lambda^{\frac12}u\|_{L^2}^{\frac{\frac12}{k+\frac12}}\|\Lambda^{\frac12}u\|_{L^2}^{\frac k{k+\frac12}}\|\nabla^{k+1}u\|_{L^2}^{\frac{\frac12}{k+\frac12}}+\|\Lambda^{\frac12}u\|_{L^2}\|\nabla^{k+1}u\|_{L^2}\right)
\\
\lesssim&\|\Lambda^{\frac12}u\|_{L^2} \|\nabla^{k+1}u\|_{L^2}\\
\lesssim&\delta \|\nabla^{k+1}u\|_{L^2}.
\end{aligned}\end{equation}
Next, for the term $I_4$, we do the approximation to simplify the presentations by
\begin{equation}
\label{2-4-a3}
I_4=-\int_{\mathbb{R}^3}\nabla^k[h(\varrho)(\nu\Delta u+\eta\nabla\hbox{div}u)]\cdot\nabla^kudx\approx-\int_{\mathbb{R}^3}\nabla^k[h(\varrho)\nabla^2u]\cdot\nabla^kudx.
\end{equation}
If $k=0$, on the basis of the fact (\ref{x1}), H\"{o}lder's together with Sobolev embedding theorem, we deduce that
\begin{equation}\label{2-4-a4}
\begin{aligned}
I_4\approx &-\int_{\mathbb{R}^3}h(\varrho)\nabla^2u\cdot udx
\\
\lesssim& \|\nabla h(\varrho)\|_{L^2}\|\nabla u\|_{L^3}\|u\|_{L^6}+\|\varrho\|_{L^{\infty}}\|\nabla u\|_{L^2}\|\nabla u\|_{L^2}
\\
\lesssim&(\|\nabla u\|_{L^3}+\|\varrho\|_{L^{\infty}})(\|\nabla\varrho\|_{L^2}^2+\|\nabla u\|_{L^2}^2)
\\
\lesssim&(\|\Lambda^{\frac32}u\|_{L^2}+\|\nabla\varrho\|_{L^2}^{\frac12}\|\Delta\varrho\|_{L^2}^{\frac12})(\|\nabla\varrho\|_{L^2}^2+\|\nabla u\|_{L^2}^2)\\
\lesssim&\delta(\|\nabla\varrho\|_{L^2}^2+\|\nabla u\|_{L^2}^2).
\end{aligned}\end{equation}
If $k=1$, by the fact (\ref{x1}), integrating by parts, one obtain
\begin{equation}\label{2-4-a5}
\begin{aligned}
I_4\approx &-\int_{\mathbb{R}^3}\nabla[h(\varrho)\nabla^2u]\cdot \nabla udx
\\
\approx&\int_{\mathbb{R}^3}h(\varrho)|\nabla^2u|^2dx
\\
\lesssim& \|h(\varrho)\|_{L^{\infty}}\|\nabla^2u\|_{L^2}^2\\
\lesssim&\|\nabla\varrho\|_{L^2}^{\frac12}\|\Delta\varrho\|_{L^2}^{\frac12}\|\nabla^2u\|_{L^2}^2\\
\lesssim&\delta\|\nabla^2u\|_{L^2}^2.
\end{aligned}\end{equation}
If $k\geq2$, by using  H\"{o}lder's inequality, Lemma \ref{Kato}, Lemma \ref{Wlemma} and Sobolev embedding theorem, we deduce that
\begin{equation}
\label{2-4-a6}\begin{aligned}
I_4
\lesssim&\|\nabla^{k+1} u\|_{L^2}\|\nabla^{k-1}[h(\varrho)\nabla^2u]\|_{L^{2}}
\\
\lesssim&\|\nabla^{k+1}u\|_{L^2}(\|\nabla^{k-1}h(\varrho)\|_{L^6}\|\nabla^2u\|_{L^3}+\|h(\varrho)\|_{L^{\infty}}\|\nabla^{k+1}u\|_{L^2})
\\
\lesssim&\|\nabla^{k+1}u\|_{L^2}(\|\nabla^k\varrho\|_{L^2}\|\Lambda^{\frac52}u\|_{L^2}+\|\nabla\varrho\|_{L^2}^{\frac12}\|\Delta\varrho\|_{L^2}^{\frac12}\|\nabla^{k+1}u\|_{L^2})
\\
\lesssim&\|\nabla^{k+1}u\|_{L^2}\left(\|\nabla^{k+1}\varrho\|_{L^2}^{\frac{k-\frac32}{k-\frac12}}\|\Lambda^{\frac32}\varrho\|_{L^2}^{\frac1{k-\frac12}}\|\nabla^{k+1}u\|_{L^2}^{\frac1{k-\frac12}}\|\Lambda^{\frac32}u\|_{L^2}^{\frac{k-\frac32}{k-\frac12}}+
\|\nabla\varrho\|_{L^2}^{\frac12}\|\Delta\varrho\|_{L^2}^{\frac12}\|\nabla^{k+1}u\|_{L^2}\right)
\\
\lesssim&(\|\Lambda^{\frac32}\varrho\|_{L^2}+\|\Lambda^{\frac32}u\|_{L^2}+\|\nabla\varrho\|_{L^2}+\|\Delta\varrho\|_{L^2})(\|\nabla^{k+1}u\|_{L^2}^2+\|\nabla^{k+1}\varrho\|^2_{L^2})\\
\lesssim&\delta(\|\nabla^{k+1}u\|_{L^2}^2+\|\nabla^{k+1}\varrho\|^2_{L^2}).
\end{aligned}\end{equation}
Moreover,  applying Lemma \ref{Wlemma},  H\"{o}lder's inequality, the Kato-Ponce inequality (Lemma \ref{Kato}) together with Sobolev embedding theorem, the term  $I_5$ can be bounded as
\begin{equation}
\label{2-4-a7}\begin{aligned}
I_5=&-\int_{\mathbb{R}^3}\nabla^k(g(\varrho)\nabla\varrho)\cdot\nabla^k u dx
\\
\lesssim&\|\nabla^ku\|_{L^3}\|\nabla^k(g(\varrho)\nabla\varrho)\|_{L^{\frac32}}
\\
\lesssim&\|\nabla^ku\|_{L^3}(\|\nabla^kg(\varrho)\|_{L^6}\|\nabla\varrho\|_{L^2}+\|g(\varrho)\|_{L^6}\|\nabla^{k+1}\varrho\|_{L^2})
\\
\lesssim&\|\nabla^ku\|_{L^3}(\|\nabla^{k+1}\varrho \|_{L^2}\|\nabla\varrho\|_{L^2}+\|\nabla\varrho\|_{L^2}\|\nabla^{k+1}\varrho\|_{L^2})
\\
\lesssim&\left(\|\nabla^{k+1}u\|_{L^2}^{\frac{k}{k+\frac12}}\|\Lambda^{\frac12}u\|_{L^2}^{\frac{\frac12}{k+\frac12}}\right)
\left(\|\nabla^{k+1} \varrho \|_{L^2} \|\Lambda^{\frac12}\varrho\|_{L^2}^{\frac k{k+\frac12}}\|\nabla^{k+1}\varrho\|_{L^2}^{\frac{\frac12}{k+\frac12}} \right)
\\
\lesssim& (\|\Lambda^{\frac12}\varrho\|_{L^2}+\|\Lambda^{\frac12}u\|_{L^2}) (\|\nabla^{k+1}\varrho\|_{L^2}^2+\|\nabla^{k+1}u\|_{L^2}^2  )\\
\lesssim& \delta (\|\nabla^{k+1}\varrho\|_{L^2}^2+\|\nabla^{k+1}u\|_{L^2}^2 ).
\end{aligned}\end{equation}
  For the term $I_6$, since $|\phi(\varrho)|\leq C$, by using Lemma \ref{Wlemma}, Lemma \ref{Kato} and Soboelv embedding theorem, we have
\begin{equation}
\begin{aligned}\label{2-4}
I_6=&\int_{\mathbb{R}^3}\nabla^k\left[\phi(\varrho)\hbox{div}\left(\nabla\chi\otimes\nabla\chi-\frac{|\nabla\chi|^2}2 \mathbb{I}_3\right)\right]\cdot\nabla^kudx
\\
\lesssim&\|\nabla^k u\|_{L^6}\left\|\nabla^k\left[\phi(\varrho)\hbox{div}\left(\nabla\chi\otimes\nabla\chi-\frac{|\nabla\chi|^2}2 \mathbb{I}_3\right)\right]\right\|_{L^{\frac65}}
\\
\lesssim&\|\nabla^{k+1}u\|_{L^2} \|\phi(\varrho)\|_{L^{\infty}}\left\|\nabla^k\hbox{div}\left(\nabla\chi\otimes\nabla\chi-\frac{|\nabla\chi|^2}2 \mathbb{I}_3\right)\right\|_{L^{\frac65}}
\\
&+\|\nabla^{k+1}u\|_{L^2}\|\nabla^k\phi(\varrho)\|_{L^6}
\left\| \hbox{div}\left(\nabla\chi\otimes\nabla\chi-\frac{|\nabla\chi|^2}2 \mathbb{I}_3\right)\right\|_{L^{\frac 32}}
\\
\lesssim&\|\nabla^{k+1}u\|_{L^2} \|\nabla\chi\|_{L^3}\|\nabla^{k+1}\nabla \chi\|_{L^2}
+\|\nabla^{k+1}u\|_{L^2}\|\nabla^k\varrho\|_{L^6}\|\nabla\chi\|_{L^3}\|\nabla^2\chi\|_{L^3}
\\
\lesssim&\delta^2(\|\nabla^{k+1}u\|_{L^2}^2+ \|\nabla^{k+1}\chi\|_{L^2}^2+\|\nabla^{k+1}\varrho\|_{L^2}^2).
\end{aligned}
\end{equation}
Using Kato-Ponce inequality (Lemma \ref{Kato}), Gagliardo-Nirenberg inequality (Lemma \ref{lem2.1A}) and Sobolev embedding theorem, we can estimate $I_7$ as
\begin{equation}
\begin{aligned}\label{2-5}
I_7=&- \int_{\mathbb{R}^3}\nabla^k(u\nabla\cdot \chi)\cdot\nabla^k\chi dx
\\
\lesssim&\|\nabla^k\chi\|_{L^6}\|\nabla^k(u\nabla\cdot \chi)\|_{L^{\frac65}}
\\
\lesssim&\|\nabla^k\chi\|_{L^6}(\|\nabla^ku\|_{L^3}\|\nabla\chi\|_{L^2}+\|u\|_{L^3}\|\nabla^{k+1}\chi\|_{L^2})
\\
\lesssim&\|\nabla^{k+1}\chi\|_{L^2}\left(\|\nabla^{k+1}u\|_{L^2}^{\frac k{k+\frac12}}\|\Lambda^{\frac12}u\|_{L^2}^{\frac{\frac12}{k+\frac12}}\|\Lambda^{\frac12}\chi\|_{L^2}^{\frac k{k+\frac12}}\|\nabla^{k+1}\chi\|_{L^2}^{\frac{\frac12}{k+\frac12}}+\|\Lambda^{\frac12}u\|_{L^2}\|\nabla^{k+1}\chi\|_{L^2}\right)
\\
\lesssim&(\|\Lambda^{\frac12}u\|_{L^2}+\|\Lambda^{\frac12}\chi\|_{L^2})(\|\nabla^{k+1}\chi\|_{L^2}+\|\nabla^{k+1}u\|_{L^2})\\
\lesssim&\delta(\|\nabla^{k+1}\chi\|_{L^2}+\|\nabla^{k+1}u\|_{L^2}).
\end{aligned}\end{equation} 
For $I_8$, if $k=0$, H\"{o}lder's inequality and Sobolev embedding theorem imply
\begin{equation}
\begin{aligned}\label{2-11x}
I_8\approx&-\int_{\mathbb{R}^3}\varphi(\varrho)\cdot\nabla^2\chi\cdot \chi dx
\\
\lesssim& \|\nabla \varphi(\varrho)\|_{L^2}\|\nabla \chi\|_{L^3}\|\chi\|_{L^6}+\|\varrho\|_{L^6}\|\nabla \chi\|_{L^{\infty}}\|\nabla \chi\|_{L^2}
\\
\lesssim&\delta(\|\nabla\varrho\|_{L^2}^2+\|\nabla\chi\|_{L^2}^2).\end{aligned}
\end{equation}
On the other hand, if $k\geq1$, employing the Leibniz formula and H\"{o}lder's inequality, we arrive at
\begin{equation}
\begin{aligned}
I_8
=& -\int_{\mathbb{R}^3}\nabla^{k-1}[\varphi(\varrho)\nabla^2\chi]\cdot\nabla^{k+1}\chi dx
\\
=& -\sum_{l=0}^{k-1}C_{k-1}^l\int_{\mathbb{R}^3}\nabla^{k-1-l} \varphi(\varrho)\cdot\nabla^{l}\nabla^2\chi\cdot\nabla^{k+1}\chi dx
\\
\lesssim& \sum_{l=0}^{[\frac k2]-1}C_{k-1}^l\|\nabla^{l+2}\chi \|_{L^3}\|\nabla^{k-l-1}\varphi(\varrho)\|_{L^6}\|\nabla^{k+1}\chi \|_{L^2}
\\
&
+\sum_{l=[\frac k2]}^{k-2}C_{k-1}^l\|\nabla^{l+2}\chi \|_{L^6}\|\nabla^{k-1-l}\varphi(\varrho)\|_{L^3} \|\nabla^{k+1}\chi \|_{L^2}+\underbrace{\int_{\mathbb{R}^3}|\varphi(\varrho)||\nabla^{k+1}\chi |^2dx}_{l=k-1}.\label{2-12x}
\end{aligned}
\end{equation}
 Gagliardo-Nirenberg's inequality (Lemma \ref{lem2.1A}) implies that
\begin{equation}
\label{2-13x}
\begin{aligned}&
\sum_{l=0}^{[\frac k2]-1}C_{k-1}^l\|\nabla^{l+2}\chi \|_{L^3}\|\nabla^{k-l-1}\varphi(\varrho)\|_{L^6}\|\nabla^{k+1}\chi \|_{L^2}
\\
\lesssim&\sum_{l=0}^{[\frac k2]-1}\|\nabla^{\alpha}\chi \|_{L^2}^{1-\frac{l+1}{k+1}}\|\nabla^{k+1}\chi \|_{L^2}^{\frac{l+1}{k+1}}\|\varrho\|_{L^2}^{\frac{l+1}{k+1}}\|\nabla^{k+1}\varrho\|_{L^2}^{1-\frac{l+1}{k+1}}
\|\nabla^{k+1}u\|_{L^2}\\
\lesssim&\delta(\|\nabla^{k+1}\varrho\|_{L^2}^2 +\|\nabla^{k+1}\chi \|_{L^2}^2 ),
\end{aligned}
\end{equation}
where $\alpha$ satisfies
$$
\frac{l+2}3-\frac13=\left(\frac{\alpha}3-\frac12\right)\left(1-\frac{l+1}{k+1}\right)+\left(\frac{k+1}3-\frac12\right)\frac{l+1}{k+1},
$$
that is
$$
\alpha=\frac{3k+3}{2k-2l}\in(\frac32,3).
$$
Moreover, also by Gagliardo-Nirenberg's inequality (Lemma \ref{lem2.1A}), we derive that
\begin{equation}
\label{2-14x}
\begin{aligned}&
\sum_{l=[\frac k2]}^{k-2}C_{k-1}^l\|\nabla^{l+2}\chi \|_{L^6}\|\nabla^{k-1-l}\varphi(\varrho)\|_{L^3} \|\nabla^{k+1}\chi \|_{L^2}
\\
\lesssim&\sum_{l=[\frac k2]}^{k-2}\| \chi \|_{L^2}^{1-\frac{l+3}{k+1}}\|\nabla^{k+1}\chi \|_{L^2}^{\frac{l+3}{k+1}}\|\nabla^{\alpha}
\varrho\|_{L^2}^{\frac{l+3}{k+1}}\|\nabla^{k+1}\varrho\|_{L^2}^{1-\frac{l+3}{k+1}}\|\nabla^{k+1}\chi \|_{L^2}
\\
\lesssim&\delta(\|\nabla^{k+1}\varrho\|_{L^2}^2 +\|\nabla^{k+1}\chi \|_{L^2}^2 ),
\end{aligned}
\end{equation}
where $\alpha$ satisfies
$$
\frac{k-1-l}3-\frac13=\left(\frac{\alpha}3-\frac12\right)\frac{l+3}{k+1}+\left(\frac{k+1}3-\frac12\right)\left(1-\frac{l+3}{k+1}\right),
$$
that is
$$
\alpha=\frac{3k+3}{2l+6}\in[\frac32,3).
$$
For the last term of the right hand side of (\ref{2-12x}), we have
\begin{equation}
\label{2-15x}
\int_{\mathbb{R}^3}|\varphi(\varrho)||\nabla^{k+1}\chi |^2dx\leq\|\varphi(\varrho)\|_{L^{\infty}}\|\nabla^{k+1}\chi \|_{L^2}^2\lesssim\delta\|\nabla^{k+1}\chi \|_{L^2}^2.
\end{equation}
Combining (\ref{2-11x})-(\ref{2-15x}) together, we easily obtain
\begin{equation}
\label{2-9}
I_8\lesssim\delta(\|\nabla^{k+1}\varrho\|_{L^2}^2+\|\nabla^{k+1}\chi \|_{L^2}^2).
\end{equation}
Note that $I_9$ satisfies
\begin{equation}
\begin{aligned}\label{2-10}
I_9=&-\int_{\mathbb{R}^3}\nabla^k[\phi(\varrho)(\chi^3-\chi)]\cdot\nabla^k\chi dx
\\
\lesssim&\left(\|\nabla^k[\phi(\varrho) \chi^3 ]\|_{L^{\frac65}}+\|\nabla^k[\phi(\varrho) \chi ]\|_{L^{\frac65}}\right)\|\nabla^k\chi\|_{L^6}
\\
\lesssim&(\|\phi(\varrho)\|_{L^3}\|\nabla^k \chi^3 \|_{L^2}+\|\chi^2 \|_{L^6}\|\chi\|_{L^2}\|\nabla^k\phi(\varrho)\|_{L^6} +\|\nabla^k[\phi(\varrho) \chi ]\|_{L^{\frac65}})\|\nabla^{k+1}\chi\|_{L^2}
\\
\lesssim&(I_{91}+I_{92}+I_{93})\|\nabla^{k+1}\chi\|_{L^2},
\end{aligned}
\end{equation}where we have used H\"{o}lder's inequality and Kato-Ponce inequality (Lemma \ref{Kato}) in (\ref{2-10}). Next, we first estimate $I_{91}$--$I_{92}$ term by term:
\begin{equation}\begin{aligned}
\label{2-11}
I_{91} \lesssim&\|\varrho\|_{L^3}\|\chi\|^2_{L^6}\|\nabla^k\chi\|_{L^6}\\
\lesssim&\|\Lambda^{\frac12}\varrho\|_{L^2}\|\nabla\chi\|_{L^2}^2\|\nabla^{k+1}\chi\|_{L^2}\lesssim\delta^3\|\nabla^{k+1}\chi\|_{L^2},
\end{aligned}\end{equation}and
\begin{equation}\begin{aligned}
\label{2-11d}
I_{92} \lesssim&\|\varrho\|_{L^\infty}\|\chi\|_{L^6}\|\chi\|_{L^2}\|\nabla^k\varrho\|_{L^6}\\
\lesssim&\|\nabla\varrho\|_{L^2}^{\frac12}\|\nabla^2\varrho\|_{L^2}^{\frac12}\|\nabla\chi\|_{L^2}\|\chi\|_{L^2}\|\nabla^{k+1}\varrho\|_{L^2}
\\
\lesssim&\delta^3\|\nabla^{k+1}\varrho\|_{L^2}.
\end{aligned}\end{equation}
 Since $\phi(\varrho)=\frac1{\varrho+1}$, we obtain $\zeta(\varrho):=\sqrt{\phi(\varrho)}=\frac1{\sqrt{\varrho+1}}$. It is easy to see that  $ \zeta(\varrho)$ is a smooth function of $\varrho$ with bounded derivatives of any order. Hence, Lemma \ref{Wlemma} holds for $\zeta(\varrho)$. By using Sobolev embedding theorem and Kato-Ponce inequality, we have
 \begin{equation}
\begin{aligned}\label{2-14-r}
I_{93}=&\|\nabla^k[\phi(\varrho) \chi ]\|_{L^{\frac65}}=\|\nabla^k[\zeta(\varrho)\zeta(\varrho)\chi ]\|_{L^{\frac65}}
\\
\lesssim&\|\zeta(\varrho)\|_{L^2}\|\zeta(\varrho)\|_{L^6}\|\nabla^k\chi\|_{L^6}+\|\chi\|_{L^2}\|\zeta(\varrho)\|_{L^6}\|\nabla^k\zeta(\varrho)\|_{L^6}
\\
\lesssim&\|\varrho\|_{L^2}\|\nabla\varrho\|_{L^2}\|\nabla^{k+1}\chi\|_{L^2}+\|\chi\|_{L^2}\|\nabla\varrho\|_{L^2}\|\nabla^{k+1}\varrho\|_{L^2}\\
\lesssim&  \delta^2 (\|\nabla^{k+1}\varrho\|_{L^2}+\|\nabla^{k+1}\chi\|_{L^2}).
 \end{aligned}
 \end{equation}
Combining (\ref{2-11})-(\ref{2-14-r}) together, we derive that
\begin{equation}
\label{2-15}
I_9\lesssim\left(\delta^3+\delta\right)(\|\nabla^{k+1}\varrho\|_{L^2}^2+\|\nabla^{k+1}\chi\|_{L^2}^2+\|\nabla^{k+2} \chi\|_{L^2}^2).
\end{equation}
The term $I_{10}$ satisfies
\begin{equation}
\label{2-16}
\begin{aligned}
I_{10}=&-\int_{\mathbb{R}^3}\nabla^{k+1}[u\cdot\nabla\chi]\cdot\nabla^{k+1}\chi dx
\\
\lesssim&\|\nabla^{k+1}\chi\|_{L^6}\|\nabla^{k+1}[u\cdot\nabla\chi]\|_{L^{\frac65}}
\\
\lesssim&\|\nabla^{k+1}\chi\|_{L^6}(\|\nabla^{k+1}u\|_{L^2}\|\nabla\chi\|_{L^3}+\|u\|_{L^3}\|\nabla^{k+1}\nabla\chi\|_{L^2})
\\
\lesssim&(\|\nabla\chi\|_{L^3}+\|u\|_{L^3})(\|\nabla^{k+1}\nabla\chi\|_{L^2}^2+\|\nabla^{k+1}u\|_{L^2}^2)
\\
\lesssim&\delta(\|\nabla^{k+1}\nabla\chi\|_{L^2}^2+\|\nabla^{k+1}u\|_{L^2}^2),
\end{aligned}
\end{equation}
where we have used  Kato-Ponce inequality (Lemma \ref{Kato}) and Sobolev embedding theorem in (\ref{2-16}).
Similar to (\ref{2-16}),  $I_{11}$ and $I_{12}$  can be bounded as
\begin{equation}
\begin{aligned}\label{2-20}
I_{11}=&-\int_{\mathbb{R}^3}\nabla^{k+1}[\varphi(\varrho)\Delta\chi]\cdot\nabla^{k+1}\chi dx
\\
\lesssim&\|\nabla^{k+2}\chi\|_{L^2}\|\nabla^k[\varphi(\varrho)\Delta\chi]\|_{L^2}
\\
\lesssim&\|\nabla^{k+2}\chi\|_{L^2}(\|\varphi(\varrho)\|_{L^{\infty}}\|\nabla^k\Delta\chi\|_{L^2}+\|\Delta\chi\|_{L^{3
}}\|\nabla^k\varphi(\varrho)\|_{L^6})
\\
\lesssim&\|\nabla^{k+2}\chi\|_{L^2}(\|\nabla\varrho\|_{L^{2}}^{\frac12}\|\nabla^2\varrho\|_{L^{2}}
^{\frac12}\|\nabla^{k+2}\chi\|_{L^2}+\|\nabla^3\chi\|_{L^2}^{\frac12}\|\nabla^2\chi\|_{L^2}^{\frac12}\|\nabla^{k+1}\varrho\|_{L^2})
\\
\lesssim&\delta(\|\nabla^{k+2}\chi\|_{L^2}^2+\|\nabla^{k+1}\varrho\|_{L^2}^2),
\end{aligned}
\end{equation}and
\begin{equation}
\begin{aligned}
\label{2-21}
I_{12}=&-\int_{\mathbb{R}^3}\nabla^{k+1}[\phi(\varrho)(\chi^3-\chi)]\cdot\nabla^{k+1}\chi dx
\\
\lesssim&\|\nabla^{k+2}\chi\|_{L^2}(\|\nabla^k[\phi(\varrho) \chi^3 ]\|_{L^2}+\|\nabla^k[\phi(\varrho) \chi  ]\|_{L^2})
\\
\lesssim&\|\nabla^{k+2}\chi\|_{L^2}(\|\phi(\varrho)\|_{L^{\infty}}\|\nabla^k\chi^3\|_{L^2}+\|\chi^3 \|_{L^{3}}
\|\nabla^k\phi(\varrho)\|_{L^6}\\&+\|\zeta(\varrho)\|_{L^2}\|\zeta(\varrho)\|_{L^6}\|\nabla^k\chi\|_{L^6}+\|\chi\|_{L^2}\|\zeta(\varrho)\|_{L^6}\|\nabla^k\zeta(\varrho)\|_{L^6}
)
.
\end{aligned}
\end{equation}
It then follows from (\ref{2-11})  that
\begin{equation}
\label{2-22}\|\nabla^k \chi^3\|_{L^2}\lesssim \delta^2\|\nabla^{k+1}\chi\|_{L^2}.
\end{equation}
Adding (\ref{2-21}) and (\ref{2-22}) together gives
\begin{equation}
\begin{aligned}
\label{2-23}
I_{12}
\lesssim&\|\nabla^{k+2}\chi\|_{L^2}(\|\phi(\varrho)\|_{L^{\infty}}\delta^2\|\nabla^{k+2}\chi\|_{L^2}
+(\|\chi\|_{L^{9}}^3+\|\chi\|_{L^{3}})\|\nabla^{k+1} \varrho \|_{L^2})
\\
\lesssim&\left(\delta^3+\delta\right)(\|\nabla^{k+1}\chi\|_{L^2}^2+\|\nabla^{k+2}\chi\|_{L^2}^2+\|\nabla^{k+1} \varrho \|_{L^2}^2).
\end{aligned}
\end{equation}
Summing up the estimates for $I_1$ -$I_{12}$,  we deduce (\ref{2-2}), this yields the desired result.
\end{proof}
We also need to derive the second type of energy estimates excluding $\varrho$, $u$ and $\chi$ themselves.
\begin{lemma}
\label{lem2.1-1}
If $\sqrt{\mathcal{E}_0^3(t)}\leq \delta$. Then, for $k=0,1,\cdots,N-1$, the following inequality holds:
\begin{equation}
\label{a2-2}
\begin{aligned}
&\frac d{dt}(\|\nabla^{k+1}\varrho\|_{L^2}^2+\|\nabla^{k+1}u\|_{L^2}^2+\|\nabla^{k+1}\chi\|_{L^2}^2+\|\nabla^{k+1}\nabla\chi\|_{L^2}^2)
\\&+C(\|\nabla^{k+2}u\|_{L^2}^2+\|\nabla^{k+2}\chi\|_{L^2}^2+\|\nabla^{k+2}\nabla\chi\|_{L^2}^2)
\\
\lesssim&(\delta^3+\delta)(\|\nabla^{k+1}\varrho\|_{L^2}^2+\|\nabla^{k+2}u\|_{L^2}^2+\|\nabla^{k+2}\chi\|_{L^2}^2+\|\nabla^{k+2}\nabla\chi\|_{L^2}^2).
\end{aligned}\end{equation}
\end{lemma}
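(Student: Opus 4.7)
The plan is to replicate the energy computation of Lemma \ref{lem2.1} one order higher. I apply $\nabla^{k+1}$ to the first three equations of (\ref{1-2}) and $\nabla^{k+2}$ to the third equation, multiply by $\nabla^{k+1}\varrho$, $\nabla^{k+1}u$, $\nabla^{k+1}\chi$, and $\nabla^{k+2}\chi$ respectively, sum, and integrate over $\mathbb{R}^3$. The linear part reproduces, exactly as in (\ref{2-3}), the time derivative of the energy $\|\nabla^{k+1}\varrho\|_{L^2}^2+\|\nabla^{k+1}u\|_{L^2}^2+\|\nabla^{k+1}\chi\|_{L^2}^2+\|\nabla^{k+2}\chi\|_{L^2}^2$ plus the dissipation $\|\nabla^{k+2}u\|_{L^2}^2+\|\nabla^{k+2}\chi\|_{L^2}^2+\|\nabla^{k+3}\chi\|_{L^2}^2$, balanced against twelve nonlinear integrals obtained from $I_1,\ldots,I_{12}$ of Lemma \ref{lem2.1} with each occurrence of $\nabla^k$ replaced by $\nabla^{k+1}$ (and $\nabla^{k+1}$ by $\nabla^{k+2}$).

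The decisive structural constraint separating (\ref{a2-2}) from (\ref{2-2}) is that the right-hand side may contain $\|\nabla^{k+1}\varrho\|_{L^2}^2$ but \emph{must not} contain $\|\nabla^{k+2}\varrho\|_{L^2}^2$, since $\varrho$ has no dissipation at any order. Every one of the shifted $I_j$-estimates therefore has to be arranged so that $\varrho$ appears at derivative order at most $k+1$, while excess derivatives are absorbed onto $u$, $\chi$, or $\nabla\chi$, where they are controlled either by the dissipative left-hand side or by the smallness of low-order norms enforced by the a priori bound (\ref{2-1}).

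The chief obstacle is the pressure-type term $g(\varrho)\nabla\varrho$ in (\ref{1-2})$_2$; a direct Kato--Ponce estimate would inevitably produce $\|\nabla^{k+2}\varrho\|_{L^2}$ on the right. I circumvent this via the primitive $G$ defined by $G'(\varrho)=g(\varrho)$, writing $g(\varrho)\nabla\varrho=\nabla G(\varrho)$ and integrating by parts:
$$\int_{\mathbb{R}^3}\nabla^{k+1}[g(\varrho)\nabla\varrho]\cdot\nabla^{k+1}u\,dx=-\int_{\mathbb{R}^3}\nabla^{k+1}G(\varrho)\cdot\nabla^{k+2}u\,dx.$$
Since $g$ has bounded derivatives of all orders, Lemma \ref{Wlemma} yields $\|\nabla^{k+1}G(\varrho)\|_{L^2}\lesssim\|\nabla^{k+1}\varrho\|_{L^2}$, and Cauchy--Schwarz with (\ref{2-1}) gives a contribution $\lesssim\delta(\|\nabla^{k+1}\varrho\|_{L^2}^2+\|\nabla^{k+2}u\|_{L^2}^2)$. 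The transport terms $\varrho\,\hbox{div}\,u$ and $u\cdot\nabla\varrho$ in (\ref{1-2})$_1$ are treated similarly: the principal piece is handled by integration by parts in $u$ through $\int u\cdot\nabla\frac{1}{2}|\nabla^{k+1}\varrho|^2\,dx=-\frac{1}{2}\int(\hbox{div}\,u)|\nabla^{k+1}\varrho|^2\,dx\lesssim\delta\|\nabla^{k+1}\varrho\|_{L^2}^2$, and the commutator remainder is bounded through Lemma \ref{lemcom} with exponents chosen so that the highest $\varrho$-derivative is $\nabla^{k+1}\varrho$ while the $u$-factor is interpolated between $\|u\|_{H^3}\lesssim\delta$ and $\|\nabla^{k+2}u\|_{L^2}$ via Lemma \ref{lem2.1A}.

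The remaining nonlinearities $h(\varrho)\nabla^2 u$, $\phi(\varrho)\hbox{div}(\nabla\chi\otimes\nabla\chi-\frac{|\nabla\chi|^2}{2}\mathbb{I}_3)$, $u\cdot\nabla\chi$, $\varphi(\varrho)\Delta\chi$, and $\phi(\varrho)(\chi^3-\chi)$ are estimated in direct analogy with $I_4$, $I_6$, $I_7$, $I_8$, $I_9$, $I_{10}$, $I_{11}$, $I_{12}$ of Lemma \ref{lem2.1}, with the computations shifted up by one derivative; Lemma \ref{Kato}, Lemma \ref{lem2.1A}, Lemma \ref{Wlemma}, and Sobolev embedding again route any extra derivative onto the dissipative variables, and the cubic $\chi^3$ term supplies the additional $\delta^3$ prefactor. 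Summing all twelve contributions yields (\ref{a2-2}).
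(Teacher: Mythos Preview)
Your overall scheme matches the paper's proof exactly: apply $\nabla^{k+1}$ (resp.\ $\nabla^{k+2}$) to the equations, test against the corresponding quantities, and bound the twelve nonlinear integrals $K_1,\ldots,K_{12}$ so that $\varrho$ never appears above order $k+1$. Your handling of the transport terms via the commutator and $\int u\cdot\nabla\tfrac12|\nabla^{k+1}\varrho|^2\,dx$ is precisely the paper's treatment of $K_2$, and the remaining terms are indeed shifted analogues of the $I_j$ estimates, with one derivative moved onto $u$ by integration by parts wherever needed (the paper does this for $K_3$--$K_6$).

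There is, however, a small but genuine gap in your treatment of the pressure term. Your primitive trick $g(\varrho)\nabla\varrho=\nabla G(\varrho)$ is fine, but the chain of reasoning ``Lemma~\ref{Wlemma} gives $\|\nabla^{k+1}G(\varrho)\|_{L^2}\lesssim\|\nabla^{k+1}\varrho\|_{L^2}$, and Cauchy--Schwarz with (\ref{2-1}) gives $\lesssim\delta(\ldots)$'' does not produce a $\delta$: after Lemma~\ref{Wlemma} you are left with $\|\nabla^{k+1}\varrho\|_{L^2}\|\nabla^{k+2}u\|_{L^2}$, neither factor of which is a low-order norm controlled by (\ref{2-1}). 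Without the $\delta$ on the $\|\nabla^{k+1}\varrho\|_{L^2}^2$ piece the lemma fails. The fix is to use that $g(0)=0$ (from $p'(1)=1$), so every term in the Fa\`a di Bruno expansion of $\nabla^{k+1}G(\varrho)$ carries either a factor $g(\varrho)=O(\varrho)$ or at least two $\nabla^{m_i}\varrho$ factors, one of which can be placed in a low-order norm; this yields $\|\nabla^{k+1}G(\varrho)\|_{L^2}\lesssim\delta\|\nabla^{k+1}\varrho\|_{L^2}$. The paper sidesteps the issue by not introducing $G$ at all: it integrates by parts once to get $\int\nabla^k[g(\varrho)\nabla\varrho]\cdot\nabla^{k+2}u\,dx$ and applies Kato--Ponce directly, with $\|g(\varrho)\|_{L^\infty}\lesssim\|\varrho\|_{L^\infty}\lesssim\delta$ and $\|\nabla^k g(\varrho)\|_{L^6}\lesssim\|\nabla^{k+1}\varrho\|_{L^2}$ supplying the smallness. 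Either route works once this point is made explicit.
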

\begin{proof}
Taking $k+1$th spatial derivatives to (\ref{1-1})$_1$, (\ref{1-1})$_2$ and (\ref{1-1})$_3$, $k+2$th spatial derivatives to (\ref{1-1})$_3$, multiplying the resulting identities by $\nabla^{k+1}\varrho$, $\nabla^{k+1}u$, $\nabla^{k+1}\chi$ and $\nabla^{k+2}\chi$ respectively, summing up and then integrating over $\mathbb{R}^3$ by parts, we derive that
\begin{equation}
\label{a2-3}
\begin{aligned}&
\frac12\frac d{dt}\int_{\mathbb{R}^3}(|\nabla^{k+1}\varrho|^2+|\nabla^{k+1}u|^2+|\nabla^{k+1}\chi|^2+|\nabla^{k+2}\chi|^2)dx
\\
&+\int_{\mathbb{R}^3}(|\nabla^{k+2}u|^2+|\nabla^{k+2}\chi|^2+|\nabla^{k+3}\chi|^2)dx
\\
=&\int_{\mathbb{R}^3}\left[\nabla^{k+1}(-\varrho\hbox{div}u-u\cdot\nabla\varrho)\cdot\nabla^{k+1}\varrho \right.
\\&
\left.-\nabla^{k+1}\left[u\cdot\nabla u+h(\varrho)(\mu\Delta u+(\mu+\lambda)\nabla\hbox{div}u)+g(\varrho)\nabla\varrho+\phi(\varrho)\hbox{div}\left(\nabla\chi\otimes\nabla\chi-\frac{|\nabla\chi|^2}2 \mathbb{I}_3\right)\right] \cdot\nabla^{k+1}u\right.
\\
&\left.
+\nabla^{k+1}\left(-u\cdot\nabla\chi-\varphi(\varrho)\Delta\chi-\phi(\varrho)(\chi^3-\chi) \right)\cdot\nabla^{k+1}\chi  \right.
\\
&\left.
+\nabla^{k+2}\left(-u\cdot\nabla\chi-\varphi(\varrho)\Delta\chi-\phi(\varrho)(\chi^3-\chi)\right)\cdot\nabla^{k+2}\chi  \right]dx
\\
=&\sum_{i=1}^{12}K_i.
\end{aligned}
\end{equation}
We will estimate the term $K_1$--$K_{11}$ on the right hand side of (\ref{a2-3}) one by one. First, through H\"{o}lder's inequality and Lemma \ref{Kato}, we arrive at
\begin{equation}\begin{aligned}
\label{2-24}
K_1=&- \int_{\mathbb{R}^3}\nabla^{k+1}(\varrho\hbox{div}u)\cdot\nabla^{k+1}\varrho dx
\\
\lesssim&\|\nabla^{k+1}\varrho\|_{L^2}\|\nabla^{k+1}(\varrho\nabla\cdot u)\|_{L^2}
\\
\lesssim&\|\nabla^{k+1}\varrho\|_{L^2}(\|\nabla^{k+1}\varrho\|_{L^2}\|\nabla u\|_{L^{\infty}}+\|\varrho\|_{L^{\infty}}\|\nabla^{k+2}u\|_{L^2})
\\
\lesssim&(\|\nabla u\|_{L^{\infty}}+\|\varrho\|_{L^{\infty}})(\|\nabla^{k+1}\varrho\|_{L^2}^2+\|\nabla^{k+2}u\|_{L^2}^2)\\
\lesssim&\delta(\|\nabla^{k+1}\varrho\|_{L^2}^2+\|\nabla^{k+2}u\|_{L^2}^2).
\end{aligned}
\end{equation}
Next, for the term $J_2$, we utilize the commutator notation (\ref{co-1}) to rewrite it, then integrate by part and use Sobolev's inequality, obtain the following inequality:
\begin{equation}
\label{2-24-a1}
\begin{aligned}
K_2=&- \int_{\mathbb{R}^3}\nabla^{k+1}(u\cdot\nabla\varrho)\cdot\nabla^{k+1}\varrho dx
\\
=&-\int_{\mathbb{R}^3}(u\cdot\nabla\nabla^{k+1}\varrho+[\nabla^{k+1},u]\cdot\nabla\varrho)\nabla^{k+1}\varrho dx
\\
=&-\int_{\mathbb{R}^3}u\cdot\nabla\frac{|\nabla^{k+1}\varrho|^2}2dx+(\|\nabla u\|_{L^{\infty}}\|\nabla^k\nabla\varrho\|_{L^2}+\|\nabla^{k+1}u\|_{L^2}\|\nabla\varrho\|_{L^{\infty}})\|\nabla^{k+1}\varrho\|_{L^2}
\\
=&\frac12\int_{\mathbb{R}^3}\nabla\cdot u|\nabla^{k+1}\varrho|^2dx+(\|\nabla u\|_{L^{\infty}}\|\nabla^k\nabla\varrho\|_{L^2}+\|\nabla^{k+1}u\|_{L^2}\|\nabla\varrho\|_{L^{\infty}})\|\nabla^{k+1}\varrho\|_{L^2}
\\
\lesssim&\|\nabla u\|_{L^{\infty}}\|\nabla^{k+1}\varrho\|^2_{L^2}+(\|\nabla u\|_{L^{\infty}}+\|\nabla\varrho\|_{L^{\infty}})\|\nabla^{k+1}\varrho\|_{L^2}^2\\
\lesssim&\delta\|\nabla^{k+1}\varrho\|^2_{L^2}.
\end{aligned}
\end{equation}
Integrating by parts, applying H\"{o}lder's inequality, Kato-Ponce inequality (Lemma \ref{Kato}),  Gagliardo-Nirenberg inequality (Lemma \ref{lem2.1A}) and Sobolev embedding theorem, we can estimate $K_3$--$K_8$ as
\begin{equation}
\begin{aligned}\label{2-24-a2}
K_3=&-\int_{\mathbb{R}^3}\nabla^{k+1}(u\cdot\nabla u)\cdot\nabla^{k+1}udx
\\
=&\int_{\mathbb{R}^3}\nabla^k(u\cdot\nabla u)\cdot\nabla^{k+2}udx
\lesssim \|\nabla^{k+2}u\|_{L^2}\|\nabla^{k}(u\cdot\nabla u)\|_{L^2}
\\
\lesssim&\|\nabla^{k+2}u\|_{L^2}(\|\nabla^ku\|_{L^6}\|\nabla u\|_{L^3}+\|u\|_{L^3}\|\nabla^{k+1}u\|_{L^6})
\\
\lesssim&\|\nabla^{k+2}u\|_{L^2}\left(\|\nabla^{k+2}u\|_{L^2}^{\frac{k-\frac12}{k+\frac12}}\|\Lambda^{\frac12}u\|_{L^2}^{\frac1{k+\frac12}}\|\Lambda^{\frac12}u\|_{L^2}^{\frac{k-\frac12}{k+\frac12}}\|\nabla^{k+2}u\|_{L^2}^{\frac1{k+\frac12}}+
\|\Lambda^{\frac12}u\|_{L^2}\|\nabla^{k+2}u\|_{L^2}\right)
\\
\lesssim&\|\Lambda^{\frac12}u\|_{L^2}\|\nabla^{k+2}u\|_{L^2}^2\lesssim\delta\|\nabla^{k+2}u\|_{L^2}^2,\end{aligned}
\end{equation}
\begin{equation}\label{2-24-a3}
\begin{aligned}K_4\approx&-\int_{\mathbb{R}^3}\nabla^{k+1}[h(\varrho)\nabla^2u]\cdot\nabla^{k+1}udx
\\
\approx&\int_{\mathbb{R}^3}\nabla^{k}[h(\varrho)\nabla^2u]\cdot\nabla^{k+2}udx
\\
\lesssim&\|\nabla^{k+2}u\|_{L^2}\|\nabla^k[h(\varrho)\nabla^2u]\|_{L^2}
\\
\lesssim&\|\nabla^{k+2}u\|_{L^2}(\|\nabla^kh(\varrho)\|_{L^6}\|\nabla^2u\|_{L^3}+\|h(\varrho)\|_{L^{\infty}}\|\nabla^{k+2}u\|_{L^2})
\\
\lesssim&\|\nabla^{k+2}u\|_{L^2}(\|\nabla^{k+1}\varrho \|_{L^2}\|\nabla^{\frac52}u\|_{L^2}+\|\nabla\varrho\|_{L^2}^{\frac12}\|\Delta\varrho\|_{L^2}^{\frac12} \|\nabla^{k+2}u\|_{L^2})
\\
\lesssim&(\|\nabla^{\frac52}u\|_{L^2}+\|\nabla\varrho\|_{L^2}+\|\Delta\varrho\|_{L^2})(\|\nabla^{k+2}u\|_{L^2}^2+\|\nabla^{k+1}\varrho\|_{L^2}^2)\\
\lesssim&\delta(\|\nabla^{k+2}u\|_{L^2}^2+\|\nabla^{k+1}\varrho\|_{L^2}^2),
\end{aligned}
\end{equation}
\begin{equation}\label{2-24-4}
\begin{aligned}
K_5=&-\int_{\mathbb{R}^3}\nabla^{k+1}[g(\varrho )\nabla\varrho]\cdot\nabla^{k+1}udx
\\
=&\int_{\mathbb{R}^3}\nabla^{k}[g(\varrho)\nabla\varrho]\cdot\nabla^{k+2}udx
\\
\lesssim&\|\nabla^{k+2}u\|_{L^2}(\|\nabla^kg(\varrho)\|_{L^6}\|\nabla\varrho\|_{L^3}+\|g(\varrho)\|_{L^{\infty}}\|\nabla^{k+1}\varrho\|_{L^2})
\\
\lesssim&\|\nabla^{k+2}u\|_{L^2}(\| \nabla^{k+1}\varrho   \|_{L^2}\|\Lambda^{\frac32}\varrho\|_{L^2}+\| \nabla\varrho \|_{L^2}^{\frac12}\| \Delta\varrho \|_{L^2}^{\frac12}\|\nabla^{k+1}\varrho\|_{L^2})
\\
\lesssim&(\|\Lambda^{\frac32}\varrho\|_{L^2}+\| \nabla\varrho \|_{L^2}+\| \Delta\varrho \|_{L^2})(\|\nabla^{k+2}u\|_{L^2}^2+\|\nabla^{k+1}\varrho\|_{L^2}^2 \|_{L^2}^2)\\\lesssim&
\delta(\|\nabla^{k+2}u\|_{L^2}^2+\|\nabla^{k+1}\varrho\|_{L^2}^2 \|_{L^2}^2),
\end{aligned}
\end{equation}
\begin{equation}
\begin{aligned}\label{a2-4}
K_6=&\int_{\mathbb{R}^3}\nabla^{k+1}\left[\phi(\varrho)\hbox{div}\left(\nabla\chi\otimes\nabla\chi-\frac{|\nabla\chi|^2}2 \mathbb{I}_3\right)\right]\cdot\nabla^{k+1}udx
\\
\lesssim&\|\nabla^{k+2} u\|_{L^2}\left\|\nabla^{k }\left[\phi(\varrho)\hbox{div}\left(\nabla\chi\otimes\nabla\chi-\frac{|\nabla\chi|^2}2 \mathbb{I}_3\right)\right]\right\|_{L^{2}}
\\
\lesssim&\|\nabla^{k+2}u\|_{L^2} \|\phi(\varrho)\|_{L^{\infty}}\left\|\nabla^{k }\hbox{div}\left(\nabla\chi\otimes\nabla\chi-\frac{|\nabla\chi|^2}2 \mathbb{I}_3\right)\right\|_{L^{2}}
\\
&+\|\nabla^{k+2}u\|_{L^2}\|\nabla^{k }\phi(\varrho)\|_{L^6}
\left\| \hbox{div}\left(\nabla\chi\otimes\nabla\chi-\frac{|\nabla\chi|^2}2 \mathbb{I}_3\right)\right\|_{L^{3}}
\\
\lesssim&\|\nabla^{k+2}u\|_{L^2}\|\varrho\|_{L^{\infty}}\|\nabla\chi\|_{L^3}\|\nabla^{k+2} \chi\|_{L^2}
+\|\nabla^{k+2}u\|_{L^2}\|\nabla^{k+1}\varrho\|_{L^2}\|\nabla\chi\|_{L^6}\|\nabla^2\chi\|_{L^6}
\\
\lesssim&\delta^2(\|\nabla^{k+2}u\|_{L^2}^2+ \|\nabla^{k+1}\chi\|_{L^2}^2+\|\nabla^{k+2}\varrho\|_{L^2}^2),
\end{aligned}
\end{equation}
\begin{equation}
\begin{aligned}\label{a2-5}
K_7=&-\int_{\mathbb{R}^3}\nabla^{k+1}(u\cdot\nabla\chi)\cdot\nabla^{k+1}\chi dx
\\
\lesssim&\|\nabla^{k+1}\chi\|_{L^6}\|\nabla^{k+1}(u\cdot\nabla\chi)\|_{L^{\frac65}}
\\
\lesssim&\|\nabla^{k+2} \chi\|_{L^2}(\|\nabla^{k+1}u\|_{L^2}\|\nabla\chi\|_{L^3}+\|u\|_{L^3}\|\nabla^{k+1}\nabla\chi\|_{L^2})
\\
\lesssim&\|\nabla^{k+2} \chi\|_{L^2}\left(\|\nabla^{k+2}u\|_{L^2}^{\frac{k+\frac12}{k+\frac32}}\|\Lambda^{\frac12}u\|_{L^2}^{\frac1{k+\frac32}}
\|\nabla^{k+2}\chi\|_{L^2}^{\frac{1}{k+\frac32}}\| \Lambda^{\frac12}\chi\|_{L^2}^{\frac{k+\frac12}{k+\frac32}}+\|\Lambda^{\frac12}u\|_{L^2}\|\nabla^{k+2}\chi \|_{L^2}\right)
\\
\lesssim&(\|\Lambda^{\frac12}\chi\|_{L^2}+\|\Lambda^{\frac12}u\|_{L^2})(\|\nabla^{k+1}\nabla\chi\|_{L^2}^2+\|\nabla^{k+2}u\|_{L^2}^2)
\\
\lesssim&\delta(\|\nabla^{k+2} \chi\|_{L^2}^2+\|\nabla^{k+2}u\|_{L^2}^2),
\end{aligned}
\end{equation} and
\begin{equation}
\label{a2-9}
\begin{aligned}
K_8=&-\int_{\mathbb{R}^3}\nabla^{k+1}[\varphi(\varrho)\Delta\chi]\cdot\nabla^{k+1}\chi dx
\\
\lesssim& \|\nabla^{k+1}[\varphi(\varrho)\Delta\chi]\|_{L^{\frac65}}\|\nabla^{k+1}\chi\|_{L^6}
\\
\lesssim&(\|\varphi(\varrho)\|_{L^3}\|\nabla^{k+1}\Delta\chi\|_{L^2}+\|\Delta\chi\|_{L^3}\|\nabla^{k+1}\varphi(\varrho)\|_{L^2})\|\nabla^{k+2}\chi\|_{L^2}
\\
\lesssim&(\|\varrho\|_{L^3}\|\nabla^{k+3}\chi\|_{L^2}+\|\nabla^2\chi\|_{L^3}\|\nabla^{k+1}\varrho\|_{L^2})\|\nabla^{k+2}\chi\|_{L^2}
\\
\lesssim&\delta(\|\nabla^{k+2}\chi\|_{L^2}^2+\|\nabla^{k+3}\chi\|_{L^2}^2+\|\nabla^{k+1}\varrho\|_{L^2}^2 ).
\end{aligned}\end{equation}
Next, we consider the term $K_9$. H\"{o}lder's inequality   implies that
\begin{equation}
\begin{aligned}\label{a2-10}
K_9=&-\int_{\mathbb{R}^3}\nabla^{k+1}[\phi(\varrho)(\chi^3-\chi)]\cdot\nabla^{k+1}\chi dx
\\
\lesssim&(\|\nabla^{k+1}[\phi(\varrho) \chi^3 ]\|_{L^{\frac65}}+\|\nabla^{k+1}[\zeta(\varrho)\zeta(\varrho) \chi ]\|_{L^{\frac65}})\|\nabla^{k+1}\chi\|_{L^6}
\\
=:&(K_{91}+K_{92})\|\nabla^{k+2}\chi\|_{L^2}.
\end{aligned}\end{equation}
By using Kato-Ponce inequality of Lemma \ref{Kato} and Sobolev embedding theorem, we arrive at
\begin{equation}\begin{aligned}
\label{a2-11}
K_{91}=&\|\nabla^{k+1}[\phi(\varrho) \chi^3 ]\|_{L^{\frac65}}
\\
\lesssim&  \|\phi(\varrho)\|_{L^3}\|\nabla^{k+1} \chi^3 \|_{L^2}+\|\chi^3 \|_{L^3}\|\nabla^{k+1}\phi(\varrho)\|_{L^2}
\\
\lesssim& \|\phi(\varrho)\|_{L^3}\|\chi\|_{L^6}^2\|\nabla^{k+1}\chi\|_{L^6}+\|\chi\|_{L^\infty}\|\chi\|_{L^6}^2\|\nabla^{k+1}\varrho\|_{L^2}
\\
\lesssim& \|\Lambda^{\frac12}\varrho\|_{L^2}\|\nabla\chi\|^2_{L^2}\|\nabla^{k+2}\chi\|_{L^2}+\|\nabla\chi\|_{L^2}^{\frac12}\|\Delta\chi\|_{L^2}^{\frac12}
\|\nabla\chi\|_{L^2}^2\|\nabla^{k+1}\varrho\|_{L^2}
\\
\lesssim&\delta^3(\|\nabla^{k+2}\chi\|_{L^2}+\|\nabla^{k+1}\varrho\|_{L^2} ) .
\end{aligned}
\end{equation}
Moreover, the term $K_{92}$ can be bounded as
 \begin{equation}
\begin{aligned}\label{a2-14-r}
K_{92}=&\|\nabla^{k+1}[\phi(\varrho) \chi ]\|_{L^{\frac65}}=\|\nabla^{k+1}[\zeta(\varrho)\zeta(\varrho)\chi ]\|_{L^{\frac65}}
\\
\lesssim&\|\zeta(\varrho)\|_{L^2}\|\zeta(\varrho)\|_{L^6}\|\nabla^{k+1}
\chi\|_{L^6}+\|\chi\|_{L^6}\|\zeta(\varrho)\|_{L^6}\|\nabla^{k+1}\zeta(\varrho)\|_{L^2}
\\
\lesssim&\|\varrho\|_{L^2}\|\nabla\varrho\|_{L^2}\|\nabla^{k+2}\chi\|_{L^2}+\|\nabla\chi\|_{L^2}\|\nabla\varrho\|_{L^2}\|\nabla^{k+1}\varrho\|_{L^2}\\
\lesssim&  \delta^2 (\|\nabla^{k+1}\varrho\|_{L^2}+\|\nabla^{k+2}\chi\|_{L^2}).
 \end{aligned}
 \end{equation}
Combining (\ref{a2-11})-(\ref{a2-14-r}) together, we derive that
\begin{equation}
\label{a2-15}
K_9\lesssim\left(\delta^3+\delta^2\right)(\|\nabla^{k+1}\varrho\|_{L^2}^2+\|\nabla^{k+2}\chi\|_{L^2}^2 ).
\end{equation}
Next,   employing H\"{o}lder's inequality, Kato-Ponce inequality of Lemma \ref{Kato} and Sobolev embedding theorem, we estimate the term $K_{10}$ as
\begin{equation}
\label{a2-16}
\begin{aligned}
K_{10}=&-\int_{\mathbb{R}^3}\nabla^{k+2}[u\cdot\nabla\chi]\cdot\nabla^{k+2}\chi dx
\\
\lesssim&\|\nabla^{k+2}\chi\|_{L^6}\|\nabla^{k+2}(u\cdot\nabla\chi)\|_{L^{\frac65}}
\\
\lesssim&\|\nabla^{k+2}\nabla\chi\|_{L^2}(\|\nabla^{k+2}u\|_{L^2}\|\nabla\chi\|_{L^3}+\|u\|_{L^3}\|\nabla^{k+2}\nabla\chi\|_{L^2})
\\
\lesssim&(\|\Lambda^{\frac32}\chi\|_{L^2}+\|\Lambda^{\frac12}u\|_{L^2})(\|\nabla^{k+2}\nabla\chi\|_{L^2}^2+\|\nabla^{k+2}u\|_{L^2}^2)
\\
\lesssim&\delta(\|\nabla^{k+2}\nabla\chi\|_{L^2}^2+\|\nabla^{k+2}u\|_{L^2}^2) .
\end{aligned}
\end{equation}
Using H\"{o}lder's inequality, Kato-Ponce inequality, Gagliado-Nirenberg inequality together with Sobolev embedding theorem again, we have
\begin{equation}
\begin{aligned}\label{a2-20}
K_{11}=&-\int_{\mathbb{R}^3}\nabla^{k+2}[\varphi(\varrho)\Delta\chi]\cdot\nabla^{k+2}\chi dx
\\
\lesssim&\|\nabla^{k+3}\chi\|_{L^2}\|\nabla^{k+1}[\varphi(\varrho)\Delta\chi]\|_{L^2}
\\
\lesssim&\|\nabla^{k+3}\chi\|_{L^2}(\|\varphi(\varrho)\|_{L^{\infty}}\|\nabla^{k+1}\Delta\chi\|_{L^2}+\|\Delta\chi\|_{L^{\infty}}\|\nabla^{k+1}
\varphi(\varrho)\|_{L^2})
\\
\lesssim&\|\nabla^{k+3}\chi\|_{L^2}(\|\nabla\varrho\|_{L^{2}}^{\frac12}\|\nabla^2\varrho\|_{L^{2}}
^{\frac12}\|\nabla^{k+3}\chi\|_{L^2}+\|\nabla^3\chi\|_{L^2}^{\frac12}\|\nabla^3\nabla\chi\|_{L^2}^{\frac12}\|\nabla^{k+1}\varrho\|_{L^2})
\\
\lesssim&\delta(\|\nabla^{k+3}\chi\|_{L^2}^2+\|\nabla^{k+1}\varrho\|_{L^2}^2),
\end{aligned}
\end{equation}
and
\begin{equation}
\begin{aligned}
\label{a2-21}
K_{12}=&-\int_{\mathbb{R}^3}\nabla^{k+2}[\phi(\varrho)(\chi^3-\chi)]\cdot\nabla^{k+2}\chi dx
\\
\lesssim&\|\nabla^{k+3}\chi\|_{L^2}(\|\nabla^{k+1}[\phi(\varrho) \chi^3 ]\|_{L^2}+\|\nabla^{k+1}[\zeta(\varrho)\zeta(\varrho) \chi ]\|_{L^2})
\\
\lesssim&\|\nabla^{k+3}\chi\|_{L^2}(\|\phi(\varrho)\|_{L^{\infty}}\|\chi\|^2_{L^6}\|\nabla^{k+1}
 \chi \|_{L^6}+\| \chi\|_{L^{\infty}}\|\nabla^{k+1}\phi(\varrho)\|_{L^2}
 \\&+\|\zeta(\varrho)\|_{L^6}^2\|\nabla^{k+1}\chi\|_{L^6}+\|\zeta(\varrho)\|_{L^\infty}\|\chi\|_{L^\infty}\|\nabla^{k+1}\zeta(\varrho)\|_{L^2})
 \\
 \lesssim&\|\nabla^{k+3}\chi\|_{L^2}(\|\nabla\varrho \|_{L^{2}}^{\frac12}\|\Delta\varrho\|_{L^2}^{\frac12}\|\nabla\chi\|^2_{L^2}\|\nabla^{k+1}
 \chi \|_{L^6}+\| \nabla\chi\|_{L^{2}}^{\frac12}\| \Delta\chi\|_{L^{2}}^{\frac12}\|\nabla^{k+1} \varrho \|_{L^2}
 \\&+\|\nabla\varrho \|_{L^2}^2\|\nabla^{k+2}\chi\|_{L^2}+\| \nabla\chi\|_{L^{2}}^{\frac12}\| \Delta\chi\|_{L^{2}}^{\frac12}\|\nabla^{k+1} \varrho \|_{L^2})
 \\
 \lesssim&\left(\delta^3+\delta\right)(\|\nabla^{k+3}\chi\|_{L^2}^2+\|\nabla^{k+1}\varrho \|_{L^2}^2) 
.
\end{aligned}
\end{equation}
Summing up the estimates for $K_1$ -$K_{12}$,   we deduce (\ref{a2-2}), this yields the desired result.
\end{proof}

The following lemma provides the dissipation estimate for $\varrho$.
\begin{lemma}
\label{lem2.2}
If $\sqrt{\mathcal{E}_0^3(t)}<\delta$, then for $k=0,1,\cdots,N-1$, we have
\begin{equation}
\label{2-25}\begin{aligned}
\frac d{dt}\int_{\mathbb{R}^3}\nabla^ku\cdot\nabla^{k+1}\varrho dx+C \|\nabla^{k+1}\varrho\|_{L^2}^2
\lesssim
\|\nabla^{k+1}u\|_{L^2}^2+\|\nabla^{k+2}u\|_{L^2}^2
+\|\nabla^{k+3} \chi\|_{L^2}^2.\end{aligned}
\end{equation}
\end{lemma}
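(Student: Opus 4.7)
The plan is to extract the density dissipation from the momentum equation by testing it against $\nabla^{k+1}\varrho$. Apply $\nabla^k$ to (\ref{1-2})$_2$, take the $L^2$ inner product with $\nabla^{k+1}\varrho$ and integrate over $\mathbb{R}^3$; the linear pressure gradient $\nabla\varrho$ then pairs with $\nabla^{k+1}\varrho$ and, after one integration by parts, produces the coercive term $\|\nabla^{k+1}\varrho\|_{L^2}^2$. For the time-derivative contribution I would use
\begin{equation*}
\int_{\mathbb{R}^3}\nabla^k u_t\cdot\nabla^{k+1}\varrho\,dx=\frac{d}{dt}\int_{\mathbb{R}^3}\nabla^k u\cdot\nabla^{k+1}\varrho\,dx-\int_{\mathbb{R}^3}\nabla^k u\cdot\nabla^{k+1}\varrho_t\,dx
\end{equation*}
and invoke the continuity equation (\ref{1-2})$_1$ to replace $\varrho_t$ by $-\mathrm{div}\,u-\varrho\,\mathrm{div}\,u-u\cdot\nabla\varrho$. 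The linear piece, after shifting a derivative, is controlled by $\|\nabla^{k+1}u\|_{L^2}^2$, while the nonlinear pieces $\nabla^{k+1}(\varrho\,\mathrm{div}\,u)$ and $\nabla^{k+1}(u\cdot\nabla\varrho)$ are bounded via Kato-Ponce (Lemma \ref{Kato}) and Sobolev embedding by $\delta(\|\nabla^{k+1}\varrho\|_{L^2}^2+\|\nabla^{k+1}u\|_{L^2}^2)$.

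For the viscous contributions $\mu\nabla^k\Delta u$ and $(\mu+\lambda)\nabla^k\nabla\mathrm{div}\,u$ paired with $\nabla^{k+1}\varrho$, a direct Cauchy-Schwarz gives
\begin{equation*}
\Bigl|\int_{\mathbb{R}^3}\nabla^{k+2}u\cdot\nabla^{k+1}\varrho\,dx\Bigr|\le\varepsilon\|\nabla^{k+1}\varrho\|_{L^2}^2+C_\varepsilon\|\nabla^{k+2}u\|_{L^2}^2,
\end{equation*}
and the $\varepsilon$-portion will be absorbed into the LHS at the end. The remaining genuinely nonlinear terms $\nabla^k(u\cdot\nabla u)$, $\nabla^k[h(\varrho)\nabla^2 u]$, $\nabla^k[g(\varrho)\nabla\varrho]$ and $\nabla^k[\phi(\varrho)\mathrm{div}(\nabla\chi\otimes\nabla\chi-\tfrac12|\nabla\chi|^2\mathbb{I}_3)]$ will be estimated by the same machinery used in the proofs of Lemmas \ref{lem2.1} and \ref{lem2.1-1}: Kato-Ponce (Lemma \ref{Kato}), Gagliardo-Nirenberg (Lemma \ref{lem2.1A}), the composite-function bound of Lemma \ref{Wlemma}, combined with the smallness hypothesis $\sqrt{\mathcal{E}_0^3(t)}\le\delta$. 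Each such term, after Cauchy-Schwarz in $\nabla^{k+1}\varrho$, produces at most $\varepsilon\|\nabla^{k+1}\varrho\|_{L^2}^2$ plus a contribution of the form required by the RHS of (\ref{2-25}).

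The main obstacle is the capillary stress term. Using the identity $\mathrm{div}(\nabla\chi\otimes\nabla\chi-\tfrac12|\nabla\chi|^2\mathbb{I}_3)=\nabla\chi\,\Delta\chi$, the application of $\nabla^k$ yields a worst-case piece involving $\nabla^{k+2}\chi$, yet the conclusion (\ref{2-25}) permits only $\|\nabla^{k+3}\chi\|_{L^2}^2$ on the right. I would resolve this by Gagliardo-Nirenberg interpolation, bounding $\|\nabla^{k+2}\chi\|_{L^2}$ between $\|\nabla\chi\|_{L^2}$ (already of size $\delta$ by the $H^3$-smallness) and $\|\nabla^{k+3}\chi\|_{L^2}$, and then applying Young's inequality so that the resulting contribution is dominated by $C(\delta)\|\nabla^{k+3}\chi\|_{L^2}^2$ plus an $\varepsilon\|\nabla^{k+1}\varrho\|_{L^2}^2$ absorbable part. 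Collecting all estimates and choosing $\varepsilon$ and $\delta$ sufficiently small so that the coercive $C\|\nabla^{k+1}\varrho\|_{L^2}^2$ survives on the left-hand side yields exactly (\ref{2-25}).
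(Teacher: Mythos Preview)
Your overall strategy is exactly the paper's: apply $\nabla^k$ to the momentum equation, pair with $\nabla^{k+1}\varrho$, rewrite the time-derivative contribution via the continuity equation, and control the linear viscous terms and the quadratic nonlinearities $u\cdot\nabla u$, $h(\varrho)\nabla^2u$, $g(\varrho)\nabla\varrho$ by Cauchy--Schwarz together with Lemmas \ref{Kato}, \ref{lem2.1A}, \ref{Wlemma}. All of that is fine.

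The one genuine gap is your treatment of the capillary term. The interpolation
\[
\|\nabla^{k+2}\chi\|_{L^2}\lesssim\|\nabla\chi\|_{L^2}^{\frac{1}{k+2}}\|\nabla^{k+3}\chi\|_{L^2}^{\frac{k+1}{k+2}}
\]
followed by Young does \emph{not} yield $C(\delta)\|\nabla^{k+3}\chi\|_{L^2}^2+\varepsilon\|\nabla^{k+1}\varrho\|_{L^2}^2$ as you claim. After the first Young split you are left with a term of size $\delta^{2+\frac{2}{k+2}}\|\nabla^{k+3}\chi\|_{L^2}^{\frac{2(k+1)}{k+2}}$, and since the exponent is strictly less than $2$ and there is no lower bound on $\|\nabla^{k+3}\chi\|_{L^2}$, any further application of Young produces an additive constant. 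That constant, once carried through to (\ref{4-3}), integrates to a linearly growing term in time and ruins both the global bound (\ref{1-4}) and the decay differential inequality (\ref{7-2}).

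The paper sidesteps this by a different H\"older split that gains a derivative through Sobolev embedding rather than through interpolation. In (\ref{2-36}) one writes, schematically,
\[
\bigl\|\nabla^k[\phi(\varrho)\,\nabla|\nabla\chi|^2]\bigr\|_{L^2}\lesssim\|\phi(\varrho)\|_{L^3}\|\nabla^{k+1}|\nabla\chi|^2\|_{L^6}+\|\nabla|\nabla\chi|^2\|_{L^3}\|\nabla^k\phi(\varrho)\|_{L^6},
\]
and then $\|\nabla^{k+1}|\nabla\chi|^2\|_{L^6}\lesssim\|\nabla\chi\|_{L^\infty}\|\nabla^{k+2}\chi\|_{L^6}\lesssim\|\nabla\chi\|_{L^\infty}\|\nabla^{k+3}\chi\|_{L^2}$. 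This delivers $\|\nabla^{k+3}\chi\|_{L^2}$ directly, multiplied by a smallness factor, with no residual constant. If you replace your interpolation step by this $L^3\times L^6$ split, the rest of your argument goes through and matches the paper.
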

\begin{proof}
Applying $\nabla^k$ to (\ref{1-2})$_2$, multiplying $\nabla\nabla^k\varrho$, integrating over $\mathbb{R}^3$ by parts, it yields that
\begin{equation}
\label{2-26}
\begin{aligned}
\int_{\mathbb{R}^3}|\nabla^{k+1}\varrho|^2dx\leq&-\int_{\mathbb{R}^3}\nabla^ku_t\cdot\nabla\nabla^k\varrho dx+C\|\nabla^{k+2}u\|_{L^2}\|\nabla^{k+1}\varrho\|_{L^2}
\\
&+\left\|\nabla^k\left[u\cdot\nabla u+h(\varrho)(\mu\Delta u+(\mu+\lambda)\nabla\hbox{div}u)+g(\varrho)\nabla\varrho\right.\right.
\\
&\left.\left.+\psi(\varrho)\hbox{div} \left(\nabla\chi\otimes\nabla\chi-\frac{|\nabla\chi|^2}2 \mathbb{I}_3\right)\right]\right\|_{L^2}\|\nabla^{k+1}\varrho\|_{L^2}.
\end{aligned}
\end{equation}
For the first term of the right hand side of (\ref{2-26}), using (\ref{1-2})$_1$, integrating by parts for both the $t$ and $x$ variables, one obtain
\begin{equation}
\begin{aligned}
\label{2-27}&
-\int_{\mathbb{R}^3}\nabla^ku_t\cdot\nabla\nabla^k\varrho dx
\\
=&-\frac d{dt}\int_{\mathbb{R}^3}\nabla^ku \cdot\nabla\nabla^k\varrho dx-\int_{\mathbb{R}^3}\nabla^k\hbox{div}u\cdot\nabla^k\varrho_tdx
\\
=&-\frac d{dt}\int_{\mathbb{R}^3}\nabla^ku \cdot\nabla\nabla^k\varrho dx+\|\nabla^k\hbox{div}u\|_{L^2}^2+\int_{\mathbb{R}^3}\nabla^k\hbox{div}u\cdot\nabla^k\hbox{div}(\varrho u)dx.
\end{aligned}
\end{equation}
Employing   H\"{o}lder's inequality, Kato-Ponce inequality and Sobolev embedding theorem, we obtain
\begin{equation}
\begin{aligned}\label{2-28}&
\int_{\mathbb{R}^3}\nabla^k\hbox{div}u\cdot\nabla^k\hbox{div}(\varrho u)dx
\\
\lesssim&\|\nabla^{k+1}u\|_{L^2}(\|\nabla^{k+1}\varrho\|_{L^2}\|u\|_{L^\infty}+\|\varrho\|_{L^\infty}\|\nabla^{k+1}u\|_{L^2})
\\
\lesssim&\|\nabla^{k+1}u\|_{L^2}(\|\nabla^{k+1}\varrho\|_{L^2}\|\nabla u\|_{L^2}^{\frac12}\|\Delta u\|_{L^2}^{\frac12}+\|\nabla\varrho\|_{L^2}^{\frac12}\|\Delta\varrho\|_{L^2}^{\frac12}\|\nabla^{k+1}u\|_{L^2})
\\
\lesssim&(\|\nabla u\|_{L^2}+\|\Delta u\|_{L^2}+\|\nabla\varrho\|_{L^2}+\|\Delta\varrho\|_{L^2})(\|\nabla^{k+1}u\|_{L^2}^2+
\|\nabla^{k+1}\varrho\|_{L^2}^2)
\\
\lesssim&\delta(\|\nabla^{k+1}u\|_{L^2}^2+
\|\nabla^{k+1}\varrho\|_{L^2}^2).\end{aligned}
\end{equation}
It then follows from (\ref{2-27}) and (\ref{2-28}) that
\begin{equation}
\label{2-32}
\begin{aligned}&
-\int_{\mathbb{R}^3}\nabla^ku_t\cdot\nabla\nabla^k\varrho dx
\\
\leq&-\frac d{dt}\int_{\mathbb{R}^3}\nabla^ku \cdot\nabla\nabla^k\varrho dx+C(\|\nabla^{k+1}u\|_{L^2}^2+\|\nabla^{k+2}u\|_{L^2}^2)+C\delta\|\nabla^{k+1}\varrho\|_{L^2}^2.
\end{aligned}
\end{equation}
Next, we also need to estimate the last term of the right hand side of (\ref{2-26}).
Note that
\begin{equation}
\begin{aligned}\label{2-33}
 \|\nabla^k(u\cdot\nabla u)\|_{L^2}
 \lesssim&\|u\|_{L^3}\|\nabla^{k+1}u\|_{L^6}+\|\nabla u\|_{L^3}\|\nabla^ku\|_{L^6}
 \\
 \lesssim&(\|\Lambda^{\frac12}u\|_{L^2}+\|\Lambda^{\frac32}u\|_{L^2})(\|\nabla^{k+1}u\|_{L^2}+\|\nabla^{k+2}u\|_{L^2})
 \\
 \lesssim&\delta(\|\nabla^{k+1}u\|_{L^2}+\|\nabla^{k+2}u\|_{L^2}) .
 \end{aligned}\end{equation}
We also have
\begin{equation}
\begin{aligned}\label{2-34}&
\|\nabla^k[h(\varrho)(\mu\Delta u+(\mu+\lambda)\nabla\hbox{div}u)]\|_{L^2}\\\approx&\|\nabla^k(h(\varrho)\nabla^2 u\|_{L^2}
\\
\lesssim&\|h(\varrho)\|_{L^{\infty}}\|\nabla^{k+2}u\|_{L^2}+\|\nabla^2u\|_{L^3}\|\nabla^kh(\varrho)\|_{L^6}
\\
\lesssim&\|\varrho\|_{L^{\infty}}\|\nabla^{k+2}u\|_{L^2}+\|\nabla^2u\|_{L^3}\|\nabla^{k+1}\varrho \|_{L^2}
\\
\lesssim&\delta(\|\nabla^{k+2}u\|_{L^2}+\|\nabla^{k+1}\varrho \|_{L^2}),
\end{aligned}
\end{equation}
and
\begin{equation}
\begin{aligned}\label{2-35}
\|\nabla^k(g(\varrho)\nabla\varrho)\|_{L^2}\lesssim &\|g(\varrho)\|_{L^{\infty}}\|\nabla^{k+1}\varrho\|_{L^2}+\|\nabla\varrho\|_{L^3}\|\nabla^kg(\varrho)\|_{L^6}
\\
\lesssim&\|\varrho\|_{L^{\infty}}\|\nabla^{k+1}\varrho\|_{L^2}+\|\nabla\varrho\|_{L^3}\|\nabla^{k+1}\varrho\|_{L^2}
\\
\lesssim&\delta \|\nabla^{k+1}\varrho \|_{L^2}.
\end{aligned}
\end{equation}
Moreover, Kato-Ponce inequality of Lemma \ref{Kato} and Sobolev inequality of Lemma \ref{lem2.1A} imply that
\begin{equation}
\begin{aligned}\label{2-36}
&\left\|\nabla^k\left[\psi(\varrho)\hbox{div} \left(\nabla\chi\otimes\nabla\chi-\frac{|\nabla\chi|^2}2 \mathbb{I}_3\right)\right]\right\|_{L^2}
\\
\lesssim&\|\psi(\varrho)\|_{L^3}\left\|\nabla^k \hbox{div} \left(\nabla\chi\otimes\nabla\chi-\frac{|\nabla\chi|^2}2 \mathbb{I}_3\right) \right\|_{L^6}
+\left\| \hbox{div} \left(\nabla\chi\otimes\nabla\chi-\frac{|\nabla\chi|^2}2 \mathbb{I}_3\right)\right\|_{L^3}\|\nabla^k\psi(\varrho)\|_{L^6}
\\
\approx&\|\psi(\varrho)\|_{L^3} \|\nabla^{k+1}|\nabla\chi|^2  \|_{L^6}
+ \||\nabla\chi||\nabla^2\chi|\|_{L^3}\|\nabla^k\psi(\varrho) \|_{L^6}
\\
\lesssim&\|\varrho\|_{L^3}\|\nabla\chi\|_{L^{\infty}}\|\nabla^{k+2}\nabla\chi\|_{L^2}+\|\nabla\chi\|_{L^6}
\|\nabla^2\chi\|_{L^6}\|\nabla^{k+1}\varrho\|_{L^2}
\\
\lesssim&\delta^3(\|\nabla^{k+1}\varrho \|_{L^2}+\|\nabla^{k+2}\nabla\chi\|_{L^2}).
\end{aligned}
\end{equation}
Combining (\ref{2-33})-(\ref{2-36}) together, we easily obtain
\begin{equation}
\begin{aligned}\label{2-37}&
\left\|\nabla^k\left[u\cdot\nabla u+h(\varrho)(\mu\Delta u+(\mu+\lambda)\nabla\hbox{div}u)+g(\varrho)\nabla\varrho\right.\right.
\\
&\left.\left.+\psi(\varrho)\hbox{div} \left(\nabla\chi\otimes\nabla\chi-\frac{|\nabla\chi|^2}2 \mathbb{I}_3\right)\right]\right\|_{L^2}\|\nabla^{k+1}\varrho\|_{L^2}
\\
\lesssim&(\delta^3+\delta)(\|\nabla^{k+1}\varrho \|_{L^2}+\|\nabla^{k+1}u\|_{L^2}+\|\nabla^{k+2}u\|_{L^2}+\|\nabla^{k+2}\nabla\chi\|_{L^2}).
\end{aligned}
\end{equation}
Plugging the estimates (\ref{2-32}) and (\ref{2-37}) into (\ref{2-26}), by using Cauchy's inequality and the smallness of $\delta$, we then complete the proof of Lemma \ref{lem2.2}.
\end{proof}

\section{Negative Sobolev estimates}

In this section, we derive the evolution of the negative Sobolev norms of the solution.
\begin{lemma}
\label{lem3.1}If $\sqrt{\mathcal{E}_0^3(t)}\leq\delta$. Then for $s\in[0,\frac12]$, we have
\begin{equation}\begin{aligned}
\label{3-1}&
\frac d{dt}(\|\Lambda^{-s}\varrho\|_{L^2}^2+\|\Lambda^{-s}u\|_{L^2}^2+\|\Lambda^{-s}\chi\|_{L^2}^2+\|\Lambda^{-s}\nabla\chi\|_{L^2}^2)
\\&+C(\|\nabla\Lambda^{-s}u\|_{L^2}^2+\|\nabla\Lambda^{-s}\chi\|_{L^2}^2+\|\nabla\Lambda^{-s}\chi\|_{L^2}^2)
\\
\lesssim&(\|\varrho\|_{H^2}^2+\|\nabla u\|_{H^1}^2+\|\nabla\chi\|_{H^3}^2)(\|\Lambda^{-s}\varrho\|_{L^2} +\|\Lambda^{-s}u\|_{L^2} +\|\Lambda^{-s}\chi\|_{L^2} +\|\Lambda^{-s}\nabla\chi\|_{L^2} ).
\end{aligned}
\end{equation}
Moreover, for $s\in(\frac12,\frac32)$, we have
\begin{equation}\begin{aligned}
\label{3-1z}&
\frac d{dt}(\|\Lambda^{-s}\varrho\|_{L^2}^2+\|\Lambda^{-s}u\|_{L^2}^2+\|\Lambda^{-s}\chi\|_{L^2}^2+\|\Lambda^{-s}\nabla\chi\|_{L^2}^2)
\\&+C(\|\nabla\Lambda^{-s}u\|_{L^2}^2+\|\nabla\Lambda^{-s}\chi\|_{L^2}^2+\|\nabla\Lambda^{-s}\chi\|_{L^2}^2)
\\
\lesssim&\left[\|(\varrho,u,\chi,\nabla\chi)\|_{L^2}^{s-\frac12}(\|\varrho\|_{H^2} +\|\nabla u\|_{H^1} +\|\nabla\chi\|_{H^1}  +\|\nabla^2\chi\|_{H^1} )^{\frac52-s}+(\|\nabla\varrho\|_{H^1}^2+\|\Delta\chi\|_{L^2}^2)\right]
\\
&\times(\|\Lambda^{-s}\varrho\|_{L^2} +\|\Lambda^{-s}u\|_{L^2} +\|\Lambda^{-s}\chi\|_{L^2} +\|\Lambda^{-s}\nabla\chi\|_{L^2} ).
\end{aligned}
\end{equation}
\end{lemma}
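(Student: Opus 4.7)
The plan is to apply $\Lambda^{-s}$ to each of the four evolution equations obtained from (\ref{1-2}) (the last one being the gradient of the $\chi$ equation), then take the $L^2$ inner product with $\Lambda^{-s}\varrho$, $\Lambda^{-s}u$, $\Lambda^{-s}\chi$ and $\Lambda^{-s}\nabla\chi$ respectively, and sum. As in the proof of Lemma \ref{lem2.1}, the two linear coupling terms $\int_{\mathbb{R}^3}\Lambda^{-s}\hbox{div} u\,\Lambda^{-s}\varrho\,dx$ and $\int_{\mathbb{R}^3}\Lambda^{-s}\nabla\varrho\cdot\Lambda^{-s}u\,dx$ cancel upon integration by parts, while the viscous and Laplacian terms yield the dissipation on the left-hand side of (\ref{3-1})--(\ref{3-1z}). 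Everything reduces to bounding nonlinear terms of the form $\int_{\mathbb{R}^3}\Lambda^{-s}N \cdot \Lambda^{-s} V\,dx \le \|\Lambda^{-s}N\|_{L^2}\|\Lambda^{-s}V\|_{L^2}$, where $N$ is one of the nonlinear terms in (\ref{1-2}) (e.g.\ $\varrho\hbox{div} u$, $u\cdot\nabla\varrho$, $u\cdot\nabla u$, $h(\varrho)\nabla^2 u$, $g(\varrho)\nabla\varrho$, $\phi(\varrho)\hbox{div}(\nabla\chi\otimes\nabla\chi-\tfrac12|\nabla\chi|^2\mathbb{I}_3)$, $u\cdot\nabla\chi$, $\varphi(\varrho)\Delta\chi$, $\phi(\varrho)(\chi^3-\chi)$, plus the gradient analogues for the $\nabla\chi$-equation) and $V\in\{\varrho,u,\chi,\nabla\chi\}$.

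The crucial tool is the Hardy--Littlewood--Sobolev estimate of Lemma \ref{lem2.3A}: for $s\in[0,3/2)$,
\[
\|\Lambda^{-s}N\|_{L^2}\lesssim\|N\|_{L^{p}},\qquad \frac1p=\frac12+\frac{s}{3}.
\]
For $s\in[0,\tfrac12]$ one has $p\in[\tfrac32,2]$, so each product $fg$ can be split via H\"older into two factors each sitting in some $L^q$ with $q\in[2,6]$; these are then controlled by standard Sobolev embeddings $H^1\hookrightarrow L^6$, $H^1\hookrightarrow L^3$ and the bounds (\ref{x1})--(\ref{x2}) plus Lemma \ref{Wlemma} for the composite functions $h,g,\phi,\varphi$. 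This yields estimates of the type $\|N\|_{L^p}\lesssim(\|\varrho\|_{H^2}+\|\nabla u\|_{H^1}+\|\nabla\chi\|_{H^3})^2$, which upon summing gives (\ref{3-1}). For the cubic term $\phi(\varrho)\chi^3$ and the linear part $\phi(\varrho)\chi$ one writes $\phi=\zeta^2$ with $\zeta=(\varrho+1)^{-1/2}$ as in the proof of Lemma \ref{lem2.1} so that Lemma \ref{Wlemma} still applies.

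For $s\in(\tfrac12,\tfrac32)$ the exponent $p$ drops below $3/2$, so one of the H\"older factors is forced into $L^{q}$ with $q<2$ and Sobolev embedding alone is no longer enough. I will handle this by Gagliardo--Nirenberg interpolation (Lemma \ref{lem2.1A}) of the form
\[
\|f\|_{L^{q}}\lesssim\|f\|_{L^2}^{\theta}\,\|\nabla^mf\|_{L^2}^{1-\theta},
\]
choosing $m,\theta$ so that the exponent of the $L^2$ factor is exactly $s-\tfrac12$ (after adding over all nonlinearities) and the exponent of the higher-derivative factor is $\tfrac52-s$; the equation $\tfrac{\theta}{2}+(1-\theta)(\tfrac12-\tfrac{m}{3})=\tfrac1q$ with $\tfrac1q=\tfrac12+\tfrac{s}{3}-\tfrac{1}{q'}$ fixes these exponents. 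The ``remainder'' terms (the pure $\nabla^2$ contributions from $\varphi(\varrho)\Delta\chi$ and $h(\varrho)\nabla^2 u$, and the quadratic $\nabla\varrho$-type ones) contribute the additive $\|\nabla\varrho\|_{H^1}^2+\|\Delta\chi\|_{L^2}^2$ factor, because those products already sit in $L^p$ after using $L^\infty$-bounds on $h(\varrho),\varphi(\varrho)$.

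The principal obstacle will be the range $s\in(\tfrac12,\tfrac32)$, and within it the terms $\phi(\varrho)(\chi^3-\chi)$ and $\phi(\varrho)\hbox{div}(\nabla\chi\otimes\nabla\chi-\tfrac12|\nabla\chi|^2\mathbb{I}_3)$: the former has a \emph{linear} part that does not automatically carry a gradient, and the latter is genuinely quadratic in $\nabla\chi$. For the linear-in-$\chi$ piece one again uses the factorization $\phi=\zeta^{2}$ to split $\phi(\varrho)\chi=\zeta(\varrho)\,[\zeta(\varrho)\chi]$, gaining one smallness factor from each $\zeta(\varrho)$; one then interpolates the remaining $\chi$ factor as above. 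All the interpolation exponents must be tracked carefully to ensure they sum to the prescribed $(s-\tfrac12,\tfrac52-s)$ pattern; once this is verified term by term and combined with the cancellation of the linear coupling, the two estimates (\ref{3-1}) and (\ref{3-1z}) follow.
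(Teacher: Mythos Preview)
Your proposal is correct and follows essentially the same route as the paper: derive the $\Lambda^{-s}$ energy identity with twelve nonlinear terms $J_1,\dots,J_{12}$, use the Hardy--Littlewood--Sobolev inequality (Lemma \ref{lem2.3A}) to reduce $\|\Lambda^{-s}N\|_{L^2}$ to $\|N\|_{L^p}$ with $\tfrac1p=\tfrac12+\tfrac s3$, then split each product by H\"older and close with Gagliardo--Nirenberg, treating the two regimes $s\in[0,\tfrac12]$ and $s\in(\tfrac12,\tfrac32)$ separately and invoking the $\zeta=\sqrt{\phi}$ factorization for the linear-in-$\chi$ piece. The only cosmetic difference is the H\"older split: the paper always takes $L^{3/s}\times L^2$ (so for $s\le\tfrac12$ one factor lands in $L^q$ with $q\ge 6$ and is handled by interpolating between $\nabla f$ and $\nabla^2 f$), whereas you describe placing both factors in $L^q$ with $q\in[2,6]$; either choice closes the estimate and yields the same right-hand sides (\ref{3-1})--(\ref{3-1z}).
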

\begin{proof}
Applying $\Lambda^{-s}$ to (\ref{1-2})$_1$, (\ref{1-2}$_2$ and (\ref{1-2})$_3$, $\Lambda^{-s}\nabla$ to (\ref{1-2})$_3$, multiplying the resulting identities by $\Lambda^{-s}\varrho$, $\Lambda^{-s}u$, $\Lambda^{-s}\chi$ and $\Lambda^{-s}\nabla\chi$, respectively, summing up and then integrating by parts, we deduce that
\begin{equation}
\label{3-2}
\begin{aligned}&
\frac12\frac d{dt}\int_{\mathbb{R}^3}(|\Lambda^{-s}\varrho|^2+|\Lambda^{-s}u|^2+|\Lambda^{-s}\chi|^2+|\Lambda^{-s}\nabla\chi|^2)dx
\\
&+\int_{\mathbb{R}^3}(|\Lambda^{-s}\nabla u|^2+|\Lambda^{-s}\nabla \chi|^2+|\Lambda^{-s}\nabla ^2\chi|^2)dx
\\
=&\int_{\mathbb{R}^3}\left[\Lambda^{-s} (-\varrho\hbox{div}u-u\cdot\nabla\varrho)\cdot\Lambda^{-s} \varrho \right.
\\&
\left.-\Lambda^{-s} \left[u\cdot\nabla u+h(\varrho)(\mu\Delta u+(\mu+\lambda)\nabla\hbox{div}u)+g(\varrho)\nabla\varrho+\phi(\varrho)\hbox{div}\left(\nabla\chi\otimes\nabla\chi-\frac{|\nabla\chi|^2}2 \mathbb{I}_3\right)\right]\cdot\Lambda^{-s}u \right.
\\
&\left.
+\Lambda^{-s} \left(-u\cdot\nabla\chi-\varphi(\varrho)\Delta\chi-\phi(\varrho)(\chi^3-\chi) \right)\cdot\Lambda^{-s} \chi  \right.
\\
&\left.
+\Lambda^{-s} \nabla\left(-u\cdot\nabla\chi-\varphi(\varrho)\Delta\chi-\phi(\varrho)(\chi^3-\chi)\right)\cdot\Lambda^{-s} \nabla\chi  \right]dx
\\
=&\sum_{i=1}^{12}J_i.
\end{aligned}
\end{equation}
The main tool to estimate the nonlinear terms in the right hand side of (\ref{3-2}) is the estimate in Lemma \ref{lem2.3A}. This forces us to require that $s\in(0,\frac32)$. If $s\in(0,\frac12]$, we easily obtain $\frac12+\frac s3<1$ and $\frac 3s\geq6$. Then, applying Lemma \ref{lem2.3A}, Lemma \ref{lem2.1A} together with H\"{o}lder's and Young's inequalities, it yields that
\begin{equation}
\label{3-4c}\begin{aligned}
J_1=&-\int_{\mathbb{R}^3}\Lambda^{-s}(\varrho\nabla\cdot u)\Lambda^{-s}\varrho dx\lesssim\|\Lambda^{-s}\varrho\|_{L^2}\|\Lambda^{-s}(\varrho\nabla\cdot u)\|_{L^2}
\\
\lesssim&\|\Lambda^{-s}\varrho\|_{L^2}\|\varrho\nabla\cdot u\|_{L^{\frac1{\frac12+\frac s3}}}\lesssim \|\Lambda^{-s}\varrho\|_{L^2}\|\varrho\|_{L^{\frac 3s}}\|\nabla u\|_{L^2}
\\
\lesssim&\|\Lambda^{-s}\varrho\|_{L^2}\|\nabla\varrho\|_{L^2}^{\frac12+s}\|\nabla^2\varrho\|_{L^2}^{\frac12-s}\|\nabla u\|_{L^2}
\\
\lesssim&(\|\nabla \varrho\|_{H^1}^2+\|\nabla u\|_{L^2}^2)\|\Lambda^{-s}\varrho\|_{L^2}.
\end{aligned}\end{equation}
Similarly, by using Lemma \ref{lem2.3A}, Lemma \ref{lem2.1A} together with H\"{o}lder's and Young's inequalities, the term $J_2$-$J_{12}$ can be bound by
\begin{equation}
\label{3-4r1}\begin{aligned}
J_2=&-\int_{\mathbb{R}^3}\Lambda^{-s}(u\cdot\nabla\varrho )\Lambda^{-s}\varrho dx\lesssim\|\Lambda^{-s}\varrho\|_{L^2}\|\Lambda^{-s}(u\cdot\nabla\varrho )\|_{L^2}
\\
\lesssim&\|\Lambda^{-s}\varrho\|_{L^2}\|u\cdot\nabla\varrho \|_{L^{\frac1{\frac12+\frac s3}}}\lesssim \|\Lambda^{-s}\varrho\|_{L^2}\|u\|_{L^{\frac 3s}}\|\nabla \varrho\|_{L^2}
\\
\lesssim&\|\Lambda^{-s}\varrho\|_{L^2}\|\nabla u\|_{L^2}^{\frac12+s}\|\nabla^2u\|_{L^2}^{\frac12-s}\|\nabla \varrho\|_{L^2}
\\
\lesssim&(\|\nabla u\|_{H^1}^2+\|\nabla \varrho\|_{L^2}^2)\|\Lambda^{-s}\varrho\|_{L^2},
\end{aligned}\end{equation}
\begin{equation}
\label{3-4r2}\begin{aligned}
J_3=&-\int_{\mathbb{R}^3}\Lambda^{-s}(u\cdot\nabla u )\Lambda^{-s}u dx\lesssim\|\Lambda^{-s}u\|_{L^2}\|\Lambda^{-s}(u\cdot\nabla u )\|_{L^2}
\\
\lesssim&\|\Lambda^{-s}u\|_{L^2}\|u\cdot\nabla u \|_{L^{\frac1{\frac12+\frac s3}}}\lesssim \|\Lambda^{-s} u\|_{L^2}\|u\|_{L^{\frac 3s}}\|\nabla u\|_{L^2}
\\
\lesssim&\|\Lambda^{-s}u\|_{L^2}\|\nabla u\|_{L^2}^{\frac12+s}\|\nabla^2u\|_{L^2}^{\frac12-s}\|\nabla u\|_{L^2}
\\
\lesssim&\|\nabla u\|_{H^1}^2 \|\Lambda^{-s}u\|_{L^2},
\end{aligned}\end{equation}
\begin{equation}
\label{3-4r4}\begin{aligned}
J_4=&-\int_{\mathbb{R}^3}\Lambda^{-s}(h(\varrho)(\nu\Delta u+(\nu+\eta)\nabla\hbox{div}u) )\Lambda^{-s}u dx\\\lesssim&\|\Lambda^{-s}u\|_{L^2}\|\Lambda^{-s}(h(\varrho)(\nu\Delta u+(\nu+\eta)\nabla\hbox{div}u) )\|_{L^2}
\\
\lesssim&\|\Lambda^{-s}u\|_{L^2}\| h(\varrho)(\nu\Delta u+(\nu+\eta)\nabla\hbox{div}u)  \|_{L^{\frac1{\frac12+\frac s3}}}\\\lesssim& \|\Lambda^{-s} u\|_{L^2}\|h(\varrho)\|_{L^{\frac 3s}}\|\Delta u\|_{L^2}
\\
\lesssim&\|\Lambda^{-s}u\|_{L^2}\|\nabla \varrho\|_{L^2}^{\frac12+s}\|\nabla^2\varrho\|_{L^2}^{\frac12-s}\|\nabla^2 u\|_{L^2}
\\
\lesssim&(\|\nabla \varrho\|_{H^1}^2+\|\nabla^2 u\|_{L^2}^2)\|\Lambda^{-s}u\|_{L^2},
\end{aligned}\end{equation}
\begin{equation}
\label{3-4a3}\begin{aligned}
J_5=&-\int_{\mathbb{R}^3}\Lambda^{-s}(g(\varrho)\nabla\varrho )\Lambda^{-s}u dx\lesssim\|\Lambda^{-s}u\|_{L^2}\|\Lambda^{-s}(g(\varrho)\nabla\varrho )\|_{L^2}
\\
\lesssim&\|\Lambda^{-s}u\|_{L^2}\| g(\varrho)\nabla\varrho  \|_{L^{\frac1{\frac12+\frac s3}}}\lesssim \|\Lambda^{-s} u\|_{L^2}\|g(\varrho)\|_{L^{\frac 3s}}\|\nabla\varrho\|_{L^2}
\\
\lesssim&\|\Lambda^{-s}u\|_{L^2}\| \nabla \varrho \|_{L^2}^{\frac12+s}\| \nabla^2\varrho \|_{L^2}^{\frac12-s}\|\nabla\varrho\|_{L^2}
\\
\lesssim&  \|\nabla \varrho\|_{H^1}^2  \|\Lambda^{-s}u\|_{L^2},
\end{aligned}\end{equation}
\begin{equation}
\begin{aligned}\label{3-4}
J_6=&\int_{\mathbb{R}^3}\Lambda^{-s}\left[\phi(\varrho)\hbox{div}\left(\nabla\chi\otimes\nabla\chi-\frac{|\nabla\chi|^2}2 \mathbb{I}_3\right)\right]\cdot\Lambda^{-s}udx
\\
\lesssim&\|\Lambda^{-s}u\|_{L^2}\|\Lambda^{-s}[\phi(\varrho)|\nabla\chi||\nabla^2\chi|]\|_{L^2}
\lesssim \|\Lambda^{-s}u\|_{L^2}\|\phi(\varrho)|\nabla\chi||\nabla^2\chi|\|_{L^{\frac1{\frac12+\frac s3}}}
\\
\lesssim&\|\Lambda^{-s}u\|_{L^2}\||\nabla\chi||\nabla^2\chi|\|_{L^{\frac1{\frac12+\frac s3}}}
\lesssim \|\nabla\chi\|_{L^{\frac3s}}\|\nabla^2\chi\|_{L^2}\|\Lambda^{-s}u\|_{L^2}
\\
\lesssim&\|\nabla^2\chi\|_{L^2}^{\frac12-s}\|\nabla^3\chi\|_{L^2}^{\frac12+s}\|\nabla^2\chi\|_{L^2}\|\Lambda^{-s}u\|_{L^2}
\\
\lesssim&(\|\nabla^2\chi\|_{L^2}^2+\|\nabla ^3\chi\|_{L^2}^2)\|\Lambda^{-s}u\|_{L^2},
\end{aligned}
\end{equation}
\begin{equation}
\begin{aligned}\label{3-5}
J_7=&\int_{\mathbb{R}^3}\Lambda^{-s}(u\cdot\nabla\chi)\cdot\Lambda^{-s}\chi dx
\\
\lesssim&\|\Lambda^{-s}\chi \|_{L^2}\|\Lambda^{-s}(u\cdot\nabla\chi)\|_{L^2}
\lesssim \|\Lambda^{-s}\chi \|_{L^2}\|u\cdot\nabla\chi\|_{L^{\frac1{\frac12+\frac s3}}}
\\
\lesssim&\|\Lambda^{-s}\chi \|_{L^2}\|u\|_{L^{\frac 3s}}\|\nabla\chi\|_{L^{2}}\lesssim\|\nabla u\|_{L^2}^{\frac12-s}\|\nabla^2u\|_{L^2}^{\frac12+s}\|\nabla \chi\|_{L^2}\|\Lambda^{-s}\chi\|_{L^2}
\\
\lesssim&(\|\nabla u\|_{L^2}^2+\|\nabla^2u\|_{L^2}^2+\|\nabla \chi\|_{L^2}^2)\|\Lambda^{-s}\chi\|_{L^2},
\end{aligned}
\end{equation}
\begin{equation}
\begin{aligned}\label{3-6}
J_8=&\int_{\mathbb{R}^3}\Lambda^{-s}(\varphi(\varrho)\Delta\chi)\cdot\Lambda^{-s}\chi dx
\\
\lesssim&\|\Lambda^{-s}\chi\|_{L^2}\|\Lambda^{-s}(\varphi(\varrho)\Delta\chi)\|_{L^2}
\lesssim \|\Lambda^{-s}\chi\|_{L^2}\|\varphi(\varrho)\Delta\chi\|_{L^{\frac1{\frac12+\frac s3}}}
\\
\lesssim&  \|\Lambda^{-s}\chi\|_{L^2}\|\varphi(\varrho)\|_{L^{\frac 3s}}\|\Delta\chi\|_{L^{2}}  \lesssim\|\nabla \varrho\|_{L^2}^{\frac12-s}\|\nabla^2\varrho\|_{L^2}^{\frac12+s}\|\Delta \chi\|_{L^2}\|\Lambda^{-s}\chi\|_{L^2}
\\
\lesssim&(\|\nabla \varrho\|_{L^2}^2+\|\nabla^2\varrho  \|_{L^2}^2+\|\nabla^2 \chi\|_{L^2}^2)\|\Lambda^{-s}\chi\|_{L^2},
\end{aligned}
\end{equation}
\begin{equation}
\begin{aligned}\label{3-7}
J_9=&\int_{\mathbb{R}^3}\Lambda^{-s}(\phi(\varrho)(\chi^3-\chi) )\cdot\Lambda^{-s}\chi dx
\\
\lesssim&\|\Lambda^{-s}\chi\|_{L^2}\|\Lambda^{-s}(\phi(\varrho)(\chi^3-\chi) )\|_{L^2}
\\
\lesssim&\|\Lambda^{-s}\chi\|_{L^2}(\|\Lambda^{-s}(\phi(\varrho) \chi^3  \|_{L^2}+\|\Lambda^{-s}(\zeta(\varrho)\zeta(\varrho) \chi  \|_{L^2})
\\
\lesssim&\|\Lambda^{-s}\chi\|_{L^2}\left(\|\phi(\varrho) \chi^3  \|_{L^{\frac1{\frac12+\frac s3}}}+\|\zeta(\varrho)\zeta(\varrho) \chi \|_{L^{\frac1{\frac12+\frac s3}}}\right) \\
\lesssim&  \|\Lambda^{-s}\chi\|_{L^2}\left(\|\phi(\varrho)\|_{L^{\infty}}\|\chi \|_{L^{\infty}} \|\chi \|_{L^{2}} \|  \chi \|_{L^\frac 3s}+\|\zeta(\varrho)\|_{L^\frac 3s}\|\zeta(\varrho)\|_{L^\infty}\| \chi  \|_{L^2}\right)
\\
\lesssim&\|\Lambda^{-s}\chi\|_{L^2}\left(\|\nabla\chi  \|_{L^{2}}^{\frac12} \|\Delta\chi  \|_{L^{2}}^{\frac12}\|\chi \|_{L^{2}}\|  \nabla\chi\|_{L^2}^{\frac12-s}\|\nabla^2\chi\|_{L^2}^{\frac12+s}\right.\\&\left.+\|  \nabla\varrho\|_{L^2}^{\frac12-s}\|\nabla^2\varrho\|_{L^2}^{\frac12+s}\|\nabla\varrho  \|_{L^{2}}^{\frac12} \|\Delta\varrho \|_{L^{2}}^{\frac12}\|\chi \|_{L^{2}}\right)
\\
\lesssim&(\delta +1)(\| \nabla \varrho\|_{L^2}^2+\| \Delta \varrho\|_{L^2}^2+\|\nabla^2\chi \|_{L^2}^2 +\| \nabla \chi\|_{L^2}^2)\|\Lambda^{-s}\chi\|_{L^2},
\end{aligned}
\end{equation}
\begin{equation}\begin{aligned}
\label{3-8}
J_{10}=&-\int_{\mathbb{R}^3}\Lambda^{-s} \nabla(u\cdot\nabla\chi)\cdot\Lambda^{-s}\nabla\chi dx
\\
=&-\int_{\mathbb{R}^3}\Lambda^{-s}(\nabla u\cdot\nabla\chi+u\cdot\nabla^2\chi)\cdot\Lambda^{-s}\nabla\chi dx
\\
\lesssim&\|\Lambda^{-s}\nabla\chi\|_{L^2}\|\Lambda^{-s}(\nabla u\cdot\nabla\chi+u\cdot\nabla^2\chi)\|_{L^2}
\\
\lesssim&\|\Lambda^{-s}\nabla\chi\|_{L^2}(\|\nabla u\cdot\nabla\chi\|_{L^{\frac1{\frac12+\frac s3}}}+\|u\cdot\nabla^2\chi\|_{L^{\frac1{\frac12+\frac s3}}})
\\
\lesssim&\|\Lambda^{-s}\nabla\chi\|_{L^2}(\|\nabla\chi\|_{L^{\frac 3s}}\|\nabla u\|_{L^2}+\|\nabla^2\chi\|_{L^2}\|u\|_{L^{\frac 3s}})
\\
\lesssim&\|\Lambda^{-s}\nabla\chi\|_{L^2}(\|\nabla^2\chi\|_{L^{2}}^{\frac12+s}\|\nabla^3\chi\|_{L^2}^{\frac12-s}\|\nabla u\|_{L^2}+\|\nabla^2\chi\|_{L^2}\|\nabla u\|_{L^{2}}^{\frac12+s}\|\nabla^2u\|_{L^2}^{\frac12-s})
\\
\lesssim&\|\Lambda^{-s}\nabla\chi\|_{L^2}(\|\nabla^2\chi\|_{L^2}^2+\|\nabla^3\chi\|_{L^2}^2+\|\nabla u\|_{L^2}^2+\|\nabla^2u\|_{L^2}^2),
\end{aligned}\end{equation}
\begin{equation}
\begin{aligned}\label{3-9}
J_{11}=&-\int_{\mathbb{R}^3}\Lambda^{-s}\nabla(\varphi(\varrho)\Delta\chi)\cdot\Lambda^{-s}\nabla\chi dx
\\
\lesssim&\|\Lambda^{-s}(\varphi(\varrho)\nabla^3\chi+\varphi'(\varrho)\nabla\varrho\nabla^2\chi)\|_{L^2}\|\Lambda^{-s}\nabla\chi\|_{L^2}
\\
\lesssim&(\|\varphi(\varrho)\nabla^3\chi\|_{L^{\frac1{\frac12+\frac s3}}}+\|\varphi'(\varrho)\nabla\varrho\nabla^2\chi\|_{L^{\frac1{\frac12+\frac s3}}})\|\Lambda^{-s}\nabla\chi\|_{L^2}
\\
\lesssim&(\|\varphi(\varrho)\nabla^3\chi\|_{L^{\frac1{\frac12+\frac s3}}}+\| \nabla\varrho\nabla^2\chi\|_{L^{\frac1{\frac12+\frac s3}}})\|\Lambda^{-s}\nabla\chi\|_{L^2}
\\
\lesssim&(\|\nabla^3\chi\|_{L^2}\|\varrho\|_{L^{\frac 3s}}+\|\nabla\varrho\|_{L^2}\|\nabla^2\chi\|_{L^{\frac 3s}})\|\Lambda^{-s}\nabla\chi\|_{L^2}
\\
\lesssim&(\|\nabla^3\chi\|_{L^2}\|\nabla\varrho\|_{L^2}^{\frac12+s}\|\nabla^2\varrho\|_{L^2}^{\frac12-
s}+\|\nabla\varrho\|_{L^2}\|\nabla^3\chi\|^{\frac12+s}_{L^{2}}\|\nabla^4\chi\|^{\frac12-s}_{L^{2}})\|\Lambda^{-s}\nabla\chi\|_{L^2}
\\
\lesssim&(\|\nabla^3\chi\|_{L^2}^2+\|\nabla\varrho\|_{L^2}^2+\|\nabla^2\varrho\|_{L^2}^2+\|\nabla^4\chi\|_{L^2}^2)\|\Lambda^{-s}\nabla\chi\|_{L^2},
\end{aligned}\end{equation}
and
\begin{equation}
\begin{aligned}\label{3-10}
J_{12}=&\int_{\mathbb{R}^3}\Lambda^{-s}\nabla[\phi(\varrho)(\chi^3-\chi)]\cdot\Lambda^{-s}\nabla\chi dx
\\
=&\int_{\mathbb{R}^3}\Lambda^{-s}[\phi'(\varrho)\nabla\varrho(\chi^3-\chi)+\phi(\varrho)(3\chi^2-1)\nabla
\chi]\cdot\Lambda^{-s}\nabla\chi dx
\\
\lesssim&\|\Lambda^{-s}\nabla\chi\|_{L^2}(\|\Lambda^{-s}(\phi'(\varrho)\nabla\varrho \chi ^3 \|_{L^2}+\|\Lambda^{-s}(\phi'(\varrho)\nabla\varrho \chi \|_{L^2}\\&+3\|\Lambda^{-s}(\phi(\varrho)  \chi^2 \nabla\chi)\|_{L^2}+\|\Lambda^{-s}(\phi(\varrho) \nabla\chi)\|_{L^2})
\\
\lesssim&\|\Lambda^{-s}\nabla\chi\|_{L^2}\left(\|\phi'(\varrho)\nabla\varrho  \chi^3 \|_{L^{\frac1{\frac12+\frac 3s}}}+\|\phi'(\varrho)\nabla\varrho  \chi \|_{L^{\frac1{\frac12+\frac 3s}}}\right.\\&\left.+\|\phi(\varrho) \chi^2 \nabla\chi\|_{L^{\frac1{\frac12+\frac s3}}}+\|\zeta(\varrho)\zeta(\varrho) \nabla\chi\|_{L^{\frac1{\frac12+\frac s3}}}\right)
\\
\lesssim&\|\Lambda^{-s}\nabla\chi\|_{L^2}\left(\|\phi'(\varrho)\|_{L^{\infty}}\|\chi \|_{L^{\infty}}^2\|\nabla\varrho\|_{L^2}\|\chi\|_{L^{\frac3s}}+\|\phi'(\varrho)\|_{L^{\infty}} \|\nabla\varrho\|_{L^2}\|\chi\|_{L^{\frac3s}}
\right.\\&\left.+\| \chi \|^2_{L^{\infty}}\|\varrho\|_{L^{\frac3s}}\|\nabla\chi\|_{L^2}+\|\varrho\|_{L^{\frac3s}}\|\nabla\chi\|_{L^2}\right)
\\
\lesssim&\|\Lambda^{-s}\nabla\chi\|_{L^2}\left( \|\chi \|_{L^{\infty}}^2\|\nabla\varrho\|_{L^2}\|\nabla\chi\|_{L^2}^{\frac12+s}\|\nabla^2\chi\|_{L^2}^{\frac12-s}+\|\nabla\varrho\|_{L^2}\|\nabla\chi\|_{L^2}^{\frac12+s}\|\nabla^2\chi\|_{L^2}^{\frac12-s}
\right.\\
&\left.
+\| \chi \|_{L^{\infty}}^2\|\nabla\varrho\|_{L^2}^{\frac12+s}\|\nabla^2\varrho\|_{L^2}^{\frac12-s}\|\nabla\chi\|_{L^2}+\|\nabla\varrho\|_{L^2}^{\frac12+s}\|\nabla^2\varrho\|_{L^2}^{\frac12-s}\|\nabla\chi\|_{L^2}\right)
\\
\lesssim&(1+\delta^2)\|\Lambda^{-s}\nabla\chi\|_{L^2} (|\nabla\varrho\|_{L^2}^2+\|\nabla\chi\|_{L^2}^2+\|\nabla^2\chi\|_{L^2}^2+\|\nabla^2\varrho\|_{L^2}^2).
\end{aligned}\end{equation}
Plugging the estimates (\ref{3-4c})-(\ref{3-10}) into (\ref{3-2}), we obtain (\ref{3-1}).

Next, if $s\in(\frac12,\frac32)$, we can estimate $J_1$-$J_{12}$ in a different way. Since $s\in(\frac12,\frac32)$, it is easy to see that $\frac12+\frac s3<1$ and $2<\frac 3s<6$. Then, Lemma \ref{lem2.3A} and Lemma \ref{lem2.1A} implies that
\begin{equation}
\label{z-1}
\begin{aligned}J_1=& \lesssim\|\Lambda^{-s}\varrho\|_{L^2}\|\Lambda^{-s}(\varrho\nabla\cdot u)\|_{L^2}
\\
\lesssim&\|\Lambda^{-s}\varrho\|_{L^2}\|\varrho\nabla\cdot u\|_{L^{\frac1{\frac12+\frac s3}}}\\\lesssim& \|\Lambda^{-s}\varrho\|_{L^2}\|\varrho\|_{L^{\frac 3s}}\|\nabla u\|_{L^2}
\\
\lesssim&\|\Lambda^{-s}\varrho\|_{L^2}\| \varrho\|_{L^2}^{s-\frac12 }\|\nabla \varrho\|_{L^2}^{\frac32-s}\|\nabla u\|_{L^2} .
\end{aligned}\end{equation}
Similarly, by using Lemma \ref{lem2.3A}, Lemma \ref{lem2.1A}, the term $J_2$-$J_{12}$ can be bound by
\begin{equation}
\label{3-4a1x}\begin{aligned}
J_2\lesssim&\|\Lambda^{-s}\varrho\|_{L^2}\|\Lambda^{-s}(u\cdot\nabla\varrho )\|_{L^2}
 \\\lesssim&\|\Lambda^{-s}\varrho\|_{L^2}\|u\|_{L^{\frac 3s}}\|\nabla \varrho\|_{L^2}
\\
\lesssim&\|\Lambda^{-s}\varrho\|_{L^2}\|  u\|_{L^2}^{s-\frac12 }\|\nabla u\|_{L^2}^{\frac32-s}\|\nabla \varrho\|_{L^2} ,
\end{aligned}\end{equation}
\begin{equation}
\label{3-4a2c}\begin{aligned}
J_3 \lesssim&\|\Lambda^{-s}u\|_{L^2}\|\Lambda^{-s}(u\cdot\nabla u )\|_{L^2}
\\ \lesssim& \|\Lambda^{-s} u\|_{L^2}\|u\|_{L^{\frac 3s}}\|\nabla u\|_{L^2}
\\
\lesssim&\|\Lambda^{-s}u\|_{L^2}\|  u\|_{L^2}^{s-\frac12}\|\nabla u\|_{L^2}^{\frac32-s}\|\nabla u\|_{L^2} ,
\end{aligned}\end{equation}
\begin{equation}
\label{3-4a4x}\begin{aligned}
J_4  \lesssim&\|\Lambda^{-s}u\|_{L^2}\|\Lambda^{-s}(h(\varrho)(\nu\Delta u+(\nu+\eta)\nabla\hbox{div}u) )\|_{L^2}
 \\\lesssim& \|\Lambda^{-s} u\|_{L^2}\|h(\varrho)\|_{L^{\frac 3s}}\|\Delta u\|_{L^2}
\\
\lesssim&\|\Lambda^{-s}u\|_{L^2}\|  \varrho\|_{L^2}^{s-\frac12 }\|\nabla \varrho\|_{L^2}^{\frac32-s}\|\nabla^2 u\|_{L^2} ,
\end{aligned}\end{equation}
\begin{equation}
\label{3-4a3x}\begin{aligned}
J_5 \lesssim&\|\Lambda^{-s}u\|_{L^2}\|\Lambda^{-s}(g(\varrho)\nabla\varrho )\|_{L^2}
\\\lesssim& \|\Lambda^{-s} u\|_{L^2}\|g(\varrho)\|_{L^{\frac 3s}}\|\nabla\varrho\|_{L^2}
\\
\lesssim&\|\Lambda^{-s}u\|_{L^2}\|  \varrho \|_{L^2}^{s-\frac12 }\| \nabla \varrho \|_{L^2}^{\frac32-s}\|\nabla\varrho\|_{L^2},
\end{aligned}\end{equation}
\begin{equation}
\begin{aligned}\label{3-4z}
J_6
\lesssim&\|\Lambda^{-s}u\|_{L^2}\||\nabla\chi||\nabla^2\chi|\|_{L^{\frac1{\frac12+\frac s3}}}
\lesssim \|\nabla\chi\|_{L^{\frac3s}}\|\nabla^2\chi\|_{L^2}\|\Lambda^{-s}u\|_{L^2}
\\
\lesssim&\|\nabla \chi\|_{L^2}^{s-\frac12 }\|\nabla^2\chi\|_{L^2}^{\frac32-s}\|\nabla^2\chi\|_{L^2}\|\Lambda^{-s}u\|_{L^2}
,
\end{aligned}
\end{equation}
\begin{equation}
\begin{aligned}\label{3-5z}
J_7
\lesssim &\|\Lambda^{-s}\chi \|_{L^2}\|u\cdot\nabla\chi\|_{L^{\frac1{\frac12+\frac s3}}}
\lesssim \|\Lambda^{-s}\chi \|_{L^2}\|u\|_{L^{\frac 3s}}\|\nabla\chi\|_{L^{2}}
\\
\lesssim&\|  u\|_{L^2}^{s-\frac12 }\|\nabla u\|_{L^2}^{\frac32-s}\|\nabla \chi\|_{L^2}\|\Lambda^{-s}\chi\|_{L^2}
,\end{aligned}
\end{equation}
\begin{equation}
\begin{aligned}\label{3-6z}
J_8
\lesssim &\|\Lambda^{-s}\chi\|_{L^2}\|\varphi(\varrho)\Delta\chi\|_{L^{\frac1{\frac12+\frac s3}}}
\lesssim   \|\Lambda^{-s}\chi\|_{L^2}\|\varphi(\varrho)\|_{L^{\frac 3s}}\|\Delta\chi\|_{L^{2}}
 \\
 \lesssim&\|  \varrho\|_{L^2}^{s-\frac12 }\|\nabla \varrho\|_{L^2}^{\frac32-s}\|\Delta \chi\|_{L^2}\|\Lambda^{-s}\chi\|_{L^2}
,\end{aligned}
\end{equation}
\begin{equation}
\begin{aligned}\label{3-7z}
J_9
\lesssim& \|\Lambda^{-s}\chi\|_{L^2}\|\phi(\varrho)(\chi^3-\chi) \|_{L^{\frac1{\frac12+\frac s3}}}
\\
\lesssim&  \|\Lambda^{-s}\chi\|_{L^2}\|\phi(\varrho)\|_{L^{2}}(\|\chi \|_{L^{\infty}}^2+1)\|  \chi \|_{L^\frac 3s}
\\
\lesssim &\| \varrho\|_{L^2}(\|\chi \|_{L^{\infty}}^2+1) \|   \chi\|_{L^2}^{s-\frac12 }\|\nabla \chi\|_{L^2}^{\frac32-s}\|\Lambda^{-s}\chi\|_{L^2}
\\
\lesssim&(\delta^2+1)\| \varrho\|_{L^2}  \|   \chi\|_{L^2}^{s-\frac12 }\|\nabla \chi\|_{L^2}^{\frac32-s}\|\Lambda^{-s}\chi\|_{L^2},
\end{aligned}
\end{equation}
\begin{equation}\begin{aligned}
\label{3-8z}
J_{10}
\lesssim& \|\Lambda^{-s}\nabla\chi\|_{L^2}(\|\nabla u\cdot\nabla\chi\|_{L^{\frac1{\frac12+\frac s3}}}+\|u\cdot\nabla^2\chi\|_{L^{\frac1{\frac12+\frac s3}}})
\\
\lesssim&\|\Lambda^{-s}\nabla\chi\|_{L^2}(\|\nabla\chi\|_{L^{\frac 3s}}\|\nabla u\|_{L^2}+\|\nabla^2\chi\|_{L^2}\|u\|_{L^{\frac 3s}})
\\
\lesssim&\|\Lambda^{-s}\nabla\chi\|_{L^2}(\|\nabla \chi\|_{L^{2}}^{s-\frac12}\|\nabla^2\chi\|_{L^2}^{\frac32-s}\|\nabla u\|_{L^2}+\|\nabla^2\chi\|_{L^2}\| u\|_{L^{2}}^{s-\frac12 }\|\nabla u\|_{L^2}^{\frac32-s}),
\end{aligned}\end{equation}
\begin{equation}
\begin{aligned}\label{3-9z}
J_{11}
\lesssim&(\|\varphi(\varrho)\nabla^3\chi\|_{L^{\frac1{\frac12+\frac s3}}}+\| \nabla\varrho\nabla^2\chi\|_{L^{\frac1{\frac12+\frac s3}}})\|\Lambda^{-s}\nabla\chi\|_{L^2}
\\
\lesssim&(\|\nabla^3\chi\|_{L^2}\|\varrho\|_{L^{\frac 3s}}+\|\nabla\varrho\|_{L^{\frac3s}}\|\nabla^2\chi\|_{L^{2}})\|\Lambda^{-s}\nabla\chi\|_{L^2}
\\
\lesssim&(\|\nabla^3\chi\|_{L^2}\| \varrho\|_{L^2}^{s-\frac12 }\|\nabla \varrho\|_{L^2}^{\frac32-
s}+\|\nabla^2\chi\|_{L^{2}}\|\nabla\varrho\|_{L^2}^{s-\frac12} \|\nabla^2\varrho\|^{\frac32-s}_{L^{2}})\|\Lambda^{-s}\nabla\chi\|_{L^2},
\end{aligned}\end{equation}
and
\begin{equation}
\begin{aligned}\label{3-10z}
J_{12}
\lesssim&\|\Lambda^{-s}\nabla\chi\|_{L^2}\left(\|\phi'(\varrho)\nabla\varrho (\chi^3-\chi)\|_{L^{\frac1{\frac12+\frac 3s}}}+\|\phi(\varrho)(3\chi^2-1)\nabla\chi\|_{L^{\frac1{\frac12+\frac s3}}}\right)
\\
\lesssim&\|\Lambda^{-s}\nabla\chi\|_{L^2}\left(\|\phi'(\varrho)\|_{L^{\infty}}(1+\|\chi \|_{L^{\infty}}^2)\|\nabla\varrho\|_{L^2}\|\chi\|_{L^{\frac3s}}
+(1+\|\chi \|_{L^{\infty}}^2)\|\varrho\|_{L^{\frac3s}}\|\nabla\chi\|_{L^2}\right)
\\
\lesssim&\|\Lambda^{-s}\nabla\chi\|_{L^2}\left( (1+\|\chi \|_{L^{\infty}}^2)\|\nabla\varrho\|_{L^2}\| \chi\|_{L^2}^{s-\frac12}\|\nabla\chi\|_{L^2}^{\frac32-s}
+(1+\|\chi \|_{L^{\infty}}^2)\| \varrho\|_{L^2}^{s-\frac12}\|\nabla \varrho\|_{L^2}^{\frac32-s}\|\nabla\chi\|_{L^2}\right)
\\
\lesssim&(1+\delta^2)\|\Lambda^{-s}\nabla\chi\|_{L^2} \left( \|\nabla\varrho\|_{L^2}\| \chi\|_{L^2}^{s-\frac12}\|\nabla\chi\|_{L^2}^{\frac32-s}
+ \| \varrho\|_{L^2}^{s-\frac12}\|\nabla \varrho\|_{L^2}^{\frac32-s}\|\nabla\chi\|_{L^2}\right).
\end{aligned}\end{equation}
Plugging the estimates (\ref{z-1})-(\ref{3-10z}) into (\ref{3-2}), we obtain (\ref{3-1z}). Hence, the proof is complete.

\end{proof}

\section{Proof of Theorem \ref{thm1.1}}

 We first close the energy estimates at each $l$-th level in our weaker sense. Suppose that $N\geq3$ and $0\leq l\leq m-1$ with $1\leq m\leq N$. Summing up the estimates (\ref{2-2}) from $k=l$ to $m-1$, since $\sqrt{\mathcal{E}_0^3(t)}\leq\delta$ is sufficiently small, we arrive at
\begin{equation}
\label{4-1}
\begin{aligned}&
\frac d{dt}\sum_{l\leq k\leq m-1} (\|\nabla^k\varrho\|_{L^2}^2+\|\nabla^ku\|_{L^2}^2+\|\nabla^k\chi\|_{L^2}^2+\|\nabla^k\nabla\chi\|_{ L^2}^2)
\\&+C \sum_{l+1\leq k\leq m}(\|\nabla^{k }u\|_{L^2}^2+\|\nabla^{k }\chi\|_{L^2}^2+\|\nabla^{k+1} \chi\|_{L^2}^2)
\\
\lesssim& (\delta^3+\delta) \sum_{l+1\leq k\leq m}\left( \|\nabla^{k }\varrho\|_{L^2}^2+\|\nabla^{k }u\|_{L^2}^2+\|\nabla^{k }\chi\|_{L^2}^2+\|\nabla^{k+1} \chi\|_{L^2}^2\right).
\end{aligned}
\end{equation}
Moreover, let $k=m-1$ in the estimates (\ref{a2-2}), we obtain
\begin{equation}\label{4-1-1}\begin{aligned}&
\frac d{dt}(\|\nabla^m\varrho\|_{L^2}^2+\|\nabla^mu\|_{L^2}^2+\|\nabla^m\chi\|_{L^2}^2+\|\nabla^{m+1}\chi\|_{L^2}^2)
\\
&+C(\|\nabla^{m+1}u\|_{L^2}^2+\|\nabla^{m+1}\chi\|_{L^2}^2+\|\nabla^{m+2}\chi\|_{L^2}^2)
\\
\lesssim& (\delta^3+\delta)(\|\nabla^m\varrho\|_{L^2}^2+\|\nabla^{m+1}u\|_{L^2}^2+\|\nabla^{m+1}\chi\|_{L^2}^2+\|\nabla^{m+2}\chi\|_{L^2}^2).
\end{aligned}\end{equation}
Combining (\ref{4-1}) and (\ref{4-1-1}) together gives
\begin{equation}
\label{4-1-2}
\begin{aligned}&
\frac d{dt}\sum_{l\leq k\leq m} (\|\nabla^k\varrho\|_{L^2}^2+\|\nabla^ku\|_{L^2}^2+\|\nabla^k\chi\|_{L^2}^2+\|\nabla^k\nabla\chi\|_{ L^2}^2)
\\&+C_1 \sum_{l+1\leq k\leq m+1}(\|\nabla^{k }u\|_{L^2}^2+\|\nabla^{k }\chi\|_{L^2}^2+\|\nabla^{k+1} \chi\|_{L^2}^2)
\\
\lesssim& C_2(\delta^3+\delta) \sum_{l+1\leq k\leq m} \|\nabla^{k }\varrho\|_{L^2}^2 .
\end{aligned}
\end{equation}
Summing up the estimates (\ref{2-25}) from $k=l$ to $m-1$, we obtain
\begin{equation}
\label{4-2}\begin{aligned}&
\frac d{dt}\sum_{l\leq k\leq m-1}\int_{\mathbb{R}^3}\nabla^ku\cdot\nabla^{k+1}\varrho dx+C_3\sum_{l+1\leq k\leq m}\|\nabla^k\varrho\|_{L^2}^2
\\
\lesssim&C_4\left(\sum_{l+1\leq k\leq m+1}
\|\nabla^{k}u\|_{L^2}^2
+\sum_{l+2\leq k\leq m+1}\|\nabla^{k} \nabla\chi\|_{L^2}^2\right).\end{aligned}
\end{equation}
Multiplying (\ref{4-2}) by $C_5\equiv 2C_2\delta(\delta^2+1)/C_3$, adding the resulting inequality with (\ref{4-1}), since $\delta>0$ is sufficiently small, we deduce that there exists a positive constant $C_6$ such that for $0\leq l\leq m-1$,
\begin{equation}
\begin{aligned}\label{4-3}&
\frac d{dt}\left\{\sum_{l\leq k\leq m} (\|\nabla^k\varrho\|_{L^2}^2+\|\nabla^ku\|_{L^2}^2+\|\nabla^k\chi\|_{L^2}^2+\|\nabla^k\nabla\chi\|_{ L^2}^2)
+C_5\sum_{l\leq k\leq m-1}\int_{\mathbb{R}^3}\nabla^ku\cdot\nabla^{k+1}\varrho dx\right\}
\\
&+C_6\left\{\sum_{l+1\leq k\leq m}\|\nabla^{k }\varrho\|_{L^2}^2+\sum_{l+1\leq k\leq m+1}
(\|\nabla^{k}u\|_{L^2}^2+\|\nabla^{k}\chi\|_{L^2}^2+\|\nabla^{k} \nabla\chi\|_{L^2}^2\right\}\leq0.
\end{aligned}\end{equation}
Define $\mathcal{E}_l^m(t)$ to be $\frac1{C_6}$ times the expression under the time derivative in (\ref{4-3}). It is easy to see that since $\delta>0$ is so small, $\mathcal{E}_l^m(t)$ is equivalent to
$$
\|\nabla^l\varrho\|^2_{H^{m-l}}+\|\nabla^lu\|^2_{H^{m-l}}+\|\nabla^l\chi\|^2_{H^{m-l}}+\|\nabla^l\nabla\chi\|^2_{H^{m-l}}.
$$
Then, (\ref{4-3}) can be rewritten as that for $0\leq l\leq m-1$,
\begin{equation}
\label{4-4}
\frac d{dt}\mathcal{E}_l^m(t)+\|\nabla^l\varrho\|_{H^{m-l}}^2+\|\nabla^{l+1}u\|_{H^{m-l}}^2+\|\nabla^{l+1}\chi \|_{H^{m-l}}^2+\|\nabla^{l+1}\nabla\chi \|_{H^{m-l}}^2\leq0.
\end{equation}
Taking $l=0$ and $m=3$ in (\ref{4-4}), integrating directly in time, it yields that
\begin{equation}
\label{4-5}
\|\varrho\|_{H^3}^2+\|u\|_{H^3}^2+\|\chi\|_{H^3}^3+\|\nabla\chi\|_{H^3}^2\lesssim\mathcal{E}_0^3(0)\lesssim\|\varrho_0\|_{H^3}^2
 +\|u_0\|_{H^3}^2+\|\chi_0\|_{H^3}^3+\|\nabla\chi_0\|_{H^3}^2.
 \end{equation}
Then, by a standard continuity argument, this closes the a priori estimates (\ref{2-1}) if at the initial time we assume that $\|\varrho_0\|_{H^3}^2
 +\|u_0\|_{H^3}^2+\|\chi_0\|_{H^3}^3+\|\nabla\chi_0\|_{H^3}^2\leq\delta_0$ is sufficiently small. This in turn allows us to take $l=0$ and $m=N$ in (\ref{4-4}), and then integrate it directly in time to obtain (\ref{1-4}).

Next, we prove the decay rate of solutions for $s\in[0,\frac12]$. Define
$$
\mathcal{E}_{-s}(t)=\|\Lambda^{-s}\varrho\|_{L^2}^2+\|\Lambda^{-s}u\|_{L^2}^2+\|\Lambda^{-s}\chi
\|_{L^2}^2+\|\Lambda^{-s}\nabla\chi\|_{L^2}^2.
$$
Then, integrating in time (\ref{3-1}), by the bound (\ref{1-4}), we derive that
\begin{equation}
\label{4-6}
\begin{aligned}
\mathcal{E}_{-s}(t)\leq&\mathcal{E}_{-s}(0)+C\int_0^t(\|\varrho\|_{H^2}^2+\|\nabla u\|_{H^1}^2+\|\nabla\chi\|_{H^3}^2)\sqrt{\mathcal{E}_{-s}(\tau)}d\tau
\\
\leq&C_0\left(1+\sup_{0\leq\tau\leq t}\sqrt{\mathcal{E}_{-s}(\tau)} \right),
\end{aligned}
\end{equation}
which implies (\ref{1-5}) for $s\in[0,\frac12]$, that is
\begin{equation}
\label{4-7}
\|\Lambda^{-s}\varrho(t)\|_{L^2}^2+\|\Lambda^{-s}u(t)\|_{L^2}^2+\|\Lambda^{-s}\chi(t)\|_{L^2}^2+\|\Lambda^{-s}\nabla\phi(t)\|_{L^2}^2\leq C_0.
\end{equation}
Moreover, if  $l=1,2,\cdots,N-1$, we may use Lemma \ref{lem2.2A} to have
$$
\|\nabla^{l+1}f\|_{L^2}\geq C\|\Lambda^{-s}f\|_{L^2}^{-\frac1{l+s}}\|\nabla^lf\|_{L^2}^{1+\frac1{l+s}}.
$$
Then, by this facts and (\ref{4-7}), we get
\begin{equation}\label{7-1}
\|\nabla^{l+1}(u,\chi,\nabla\chi)\|_{L^2}^2\geq C_0(\|\nabla^{l}(u,\chi,\nabla\chi)\|_{L^2}^2)^{1+\frac1{k+s}}.
\end{equation}
Thus, for $1=1,2,\cdots,N-1$,
$$
\|\nabla^{l+1}(u,\chi,\nabla\chi)\|_{H^{N-l-1}}^2\geq C_0(\|\nabla^{l}(u,\chi,\nabla\chi)\|_{H^{N-l}}^2)^{1+\frac1{l+s}}.
$$
Thus, we deduce from (\ref{4-4}) with $m=N$ the following inequality
\begin{equation}
\label{7-2}
\frac d{dt}\mathcal{E}_l^N+C_0\left(\mathcal{E}_l^N\right)^{1+\frac1{l+s}}\leq 0,\quad\hbox{for}~l=1,2,\cdots,N-1,
\end{equation}
 which implies
\begin{equation}
\label{7-3}
\mathcal{E}_l^N(t)\leq C_0(1+ t)^{-l-s},\quad\hbox{for}~l=1,2,\cdots,N-1.
\end{equation}
Hence, (\ref{1-5}) holds.

On the other hand, the arguments for $s\in[0,\frac12]$ can not be applied to $s\in(\frac12,\frac32)$. However, observing that $\varrho_0,u_0,\chi_0,\nabla\chi_0\in\dot{H}^{-\frac12}$ hold since $\dot{H}^{-s}\bigcap L^2\subset\dot{H}^{-s'}$ for any $s'\in[0,s]$, we can deduce from what we have proved for (\ref{1-5})-(\ref{1-6}) with $s=\frac12$ that the following estimate holds:
\begin{equation}
\label{7-4}
\|\nabla^l\varrho\|_{H^{N-l}}^2+\|\nabla^lu\|_{H^{N-l}}^2+\|\nabla^l\chi\|_{H^{N-l}}^2+\|\nabla^l\nabla\chi\|_{H^{N-l}}^2\leq C_0(1+t)^{-\frac12-l},\quad \hbox{for}~l=0,1,\cdots,N-1.
\end{equation}
Therefore, we deduce from (\ref{3-1z}) that for $s\in(\frac12,\frac32)$,
\begin{equation}
\begin{aligned}\label{7-5}
\mathcal{E}_{-s}(t)\leq&\mathcal{E}_{-s}(0)+C\int_0^t\|(\varrho,u,\chi,\nabla\chi)\|_{L^2}^{s-\frac12}(\|\varrho\|_{H^2} +\|\nabla u\|_{H^1} +\|\nabla\chi\|_{H^1}  +\|\nabla^2\chi\|_{H^1} )^{\frac52-s} \sqrt{\mathcal{E}_{-s}(\tau)}d\tau
\\
&+C\int_0^t(\|\nabla\varrho\|_{H^1}^2+\|\Delta\chi\|^2_{L^2})\sqrt{\mathcal{E}_{-s}(\tau)}d\tau
\\
\leq&C+C\int_0^t(1+\tau)^{-\frac74-\frac s2}d\tau\sup_{\tau\in[0,t]} \sqrt{\mathcal{E}_{-s}(\tau)}+C\sup_{0\leq\tau\leq t}\sqrt{\mathcal{E}_{-s}(\tau)}
\\
\lesssim&1+\sup_{\tau\in[0,t]} \sqrt{\mathcal{E}_{-s}(\tau)},
\end{aligned}\end{equation}
which implies that (\ref{1-5}) holds for $s\in(\frac12,\frac32)$,
i.e.
\begin{equation}
\label{7-6}\|\Lambda^{-s}\varrho(t)\|_{L^2}^2+\|\Lambda^{-s}u(t)\|_{L^2}^2+\|\Lambda^{-s}\chi(t)\|_{L^2}^2+\|\Lambda^{-s}\nabla\phi(t)\|_{L^2}^2\leq C_0.
\end{equation}
Since we have proved (\ref{7-6}), we may repeat the arguments leading to (\ref{1-6}) for $s\in[0,\frac12]$ to prove that they also hold for $s\in(\frac12,\frac32)$. Therefore, the proof  of Theorem \ref{thm1.1} is complete.

\section*{Acknowledgement}
This paper was supported by the Fundamental Research Funds for the Central Universities (grant   N2005031).  The author would like to thank Prof. Yong Zhou   and Prof. Hao Wu for their encourage and guidance.

}

\end{document}